\documentclass[reqno]{amsart}
\usepackage[lite]{amsrefs}
\usepackage{mathtools,amsmath,amssymb}
\usepackage[pdfencoding=auto, psdextra]{hyperref}
\hypersetup{colorlinks=true,allcolors=[rgb]{0,0,0.6}}
\usepackage[hmargin= 1.3 in,vmargin=1 in]{geometry}
\usepackage{bbm,mathrsfs}
\usepackage{xcolor}
\usepackage[all,cmtip]{xy}
\usepackage{appendix}

\newcommand{\T}{\mathbb{T}}

\newcommand{\R}{\mathbb{R}}
\newcommand{\C}{\mathbb{C}}
\newcommand{\Z}{\mathbb{Z}}
\newcommand{\N}{\mathbb{N}}

\newcommand{\eps}{\varepsilon}



\newcommand{\dd}{\mathrm{d}}

\newcommand{\e}{\mathrm{e}}

\newcommand{\mc}{\mathcal}

\newcommand{\ii}{\mathrm{i}}

\numberwithin{equation}{section}

\theoremstyle{plain}

\theoremstyle{definition}

\theoremstyle{remark}

\usepackage{graphicx, color}

\definecolor{darkred}{rgb}{0.9,0,0.3}
\definecolor{darkblue}{rgb}{0,0.3,0.9}

\newcommand{\vertiii}[1]{{\left\vert\kern-0.25ex\left\vert\kern-0.25ex\left\vert #1 
		\right\vert\kern-0.25ex\right\vert\kern-0.25ex\right\vert}}


\theoremstyle{plain} 
\newtheorem{theorem}{Theorem}[section]
\newtheorem*{theorem*}{Theorem}
\newtheorem{lemma}[theorem]{Lemma}
\newtheorem*{lemma*}{Lemma}
\newtheorem{corollary}[theorem]{Corollary}
\newtheorem*{corollary*}{Corollary}
\newtheorem{proposition}[theorem]{Proposition}
\newtheorem*{proposition*}{Proposition}

\newtheorem*{conjecture*}{Conjecture}

\theoremstyle{definition} 
\newtheorem{definition}[theorem]{Definition}
\newtheorem*{definition*}{Definition}

\newtheorem*{example*}{Example}
\newtheorem{remark}[theorem]{Remark}
\newtheorem*{remark*}{Remark}

\newtheorem*{assumption*}{Assumption}

\begin{document}
\title[Higher order Hartree equations]{The well-posedness and convergence of higher-order Hartree equations in critical Sobolev spaces on $\T^3$}
\author{Ryan L.~Acosta Babb}
\address{University of Warwick, Mathematics Institute, Zeeman Building, Coventry CV4 7AL, UK}
\email{ryan.l.acosta-babb@warwick.ac.uk}
\author{Andrew Rout}
\address{Univ Rennes, [UR1], CNRS, IRMAR -- UMR 6625, F-35000 Rennes, France}
\email{andrew.rout@univ-rennes.fr}
\subjclass{Primary: 35Q55; Secondary: 35Q40, 37K06, 35R01}
\keywords{Nonlinear Schr\"odinger equation, global well-posedness, perturbation theory, torus, critical exponent}
\begin{abstract}
In this article, we consider Hartree equations generalised to $2p+1$ order nonlinearities. These equations arise in the study of the mean-field limits of Bose gases with $p$--body interactions. We study their well-posedness properties in $H^{s_c}(\T^3)$, where $\T^3$ is the three dimensional torus and $s_c = 3/2 - 1/p$ is the scaling-critical regularity. The convergence of solutions of the Hartree equation to solutions of the nonlinear Schr\"odinger equation is proved. We also consider the case of mixed nonlinearities, proving local well-posedness in $s_c$ by considering the problem as a perturbation of the higher-order Hartree equation. In the particular case of the (defocusing) quintic-cubic Hartree equation, we also prove global well-posedness for all initial conditions in $H^1(\T^3)$. This is done by viewing it as a perturbation of the local quintic NLS.
\end{abstract}
\maketitle

\section{Introduction}
The cubic {\it Hartree equation} is given by
\begin{equation}
\label{cubic_Hartree}
\left\{
	\begin{alignedat}{2}
        \ii \partial_t u + \Delta u &= (\omega*|u|^2)u, \\
        u(x,t) &= u_0(x),
\end{alignedat}\right.
\end{equation}
where $u \colon \mathbf{X} \times I \to \C$, $\mathbf{X} = \R^d$ or $\T^d$ and $I \subset \R$, and $\omega \in L^1(\mathbf{X})$ is an even, real-valued interaction potential.
Here $*$ denotes convolution in the spatial variable, so $$f*g(x) := \int_{\mathbf{X}}f(x-y)g(y) \, \dd y.$$
The Hartree equation arises in the study of {\it mean-field} limits of the many-body Schr\"odinger equation.
Formally taking $\omega = \pm \delta$, one recovers the cubic {\it nonlinear Schr\"odinger equation}, given by
\begin{equation}
\label{cubic_NLS}
\left\{
\begin{alignedat}{2}
    \ii \partial_t u + \Delta u &= \pm |u|^2u, \\
    u(x,t) &= u_0(x),
\end{alignedat}\right.
\end{equation}
which arises in the study of Bose--Einstein condensates.
We call the case where $\omega=\delta$ the {\it defocusing} problem, and $\omega= -\delta$ the {\it focusing} problem.
In this paper we study the generalisation of \eqref{cubic_Hartree} to higher-order nonlinearities. Namely, we consider
\begin{equation}
\label{higher_order_Hartree}
\left\{
\begin{alignedat}{2}
\ii \partial_t u + \Delta u &=
    \left(\int_{\mathbf{X}^p} V(x-x_1,\ldots,x - x_p) |u(x_1)|^2 \ldots |u(x_p)|^2 \, \dd x_1 \ldots \dd x_p \right)u(x), \\
    u(x,t) &= u_0(x),
\end{alignedat}\right.
\end{equation}
where $V \in L^1(\mathbf{X}^p;\R)$ is a symmetric, real-valued interaction potential, and $V(x-x_1,\ldots,x - x_p)$ is symmetric under permutation of its arguments. Formally setting $V= \pm \delta_{\mathbf{X}^p}$, we recover the nonlinear Schr\"odinger equation (NLS) with nonlinearity $|u|^{2p}u$. Hence, we recover
\begin{equation}
\label{higher_order_NLS}
\left\{
\begin{alignedat}{2}
    \ii \partial_t u + \Delta u &= \pm |u|^{2p}u, \\
    u(x,t) &= u_0(x).
\end{alignedat}\right.
\end{equation}
In this paper, our spatial domain is the three dimensional torus, so $\mathbf{X} = \T^3$, which is the most physically relevant periodic domain. We prove that the Hartree equation given in  \eqref{higher_order_Hartree} is globally well-posed for small initial data. We also show that the solution of \eqref{higher_order_Hartree} converges to the solution of \eqref{higher_order_NLS} as $V_N \to \pm \delta$.

We take our initial condition $u_0 \in H^{s_c}(\T^3)$, where 
\begin{equation}
\label{critical_sobolev}
s_c := \frac{3}{2}- \frac{1}{p}.
\end{equation}
This exponent is critical in the Euclidean case, since it is the unique exponent
for which the scaling symmetry \[
    u(t,x) \mapsto u_\lambda(t,x) := \lambda^{-1/p}u(\lambda^{-2}t, \lambda^{-1}x), \quad \lambda>0
\] is invariant in the $\dot{H}^{s_c}_x(\R^3)$ norm.

There are two invariant quantities associated to the Hartree equation:
\begin{equation}
\label{mass}
M(u) := \|u\|_{L^2}^2 = \int_{\mathbf{X}} |u(x)|^2 \, \dd x,
\end{equation}
and
\begin{multline}
\label{energy_one_nonlinearity}
E(u) := \int_{\mathbf{X}} |\nabla u(x)|^2 \, \dd x \\
+\frac{1}{p+1} \int_{\mathbf{X}^{p+1}} V(x-x_1,\ldots,x-x_p) |u(x_1)|^2\ldots|u(x_p)|^2|u(x)|^2 \, \dd x_1 \ldots \dd x_p \, \dd x, 
\end{multline}
which are the mass and the energy (or Hamiltonian), respectively.
\begin{remark}
Let us remark that not all choices of $V \in L^1(\mathbf{X}^p)$ give rise to a Hamiltonian structure. For example, choosing $V(x,y) = \omega(x)\omega(y)$ for an even function $\omega \colon \mathbf{X} \to \R$ does not. Instead we need $V(x-x_1,\ldots,x-x_p)$ to be symmetric under permutation of its arguments $x,x_1\ldots,x_p$ and that $V(x_1,\ldots,x_p)$ be symmetric. To ensure the energy is finite, for $p=2$, we will take $V$ to be of the form
\begin{equation}
\label{three_body_interaction}
V(x-y,x-z) := \frac{1}{3}\left(\omega(x-y)\omega(y-z) + \omega(y-z)\omega(z-x) + \omega(z-x)\omega(x-y)\right),
\end{equation}
where $\omega \colon \mathbf{X} \to \R$ is an even $L^1$ function. We do this to ensure we have a convolution structure, which is convenient for the application of Young's inequality. This can be generalised to higher-order nonlinearities, see \eqref{generalised_three_body_interaction}. Interactions of this kind were considered, for example, in \cite{Lee20,NS20}. One can also consider other potentials $V$; for example, see the alternative potential discussed in Appendix \ref{appendix_hamiltonian}. In the case of $V$ as in \eqref{three_body_interaction}, the energy is of the form
\begin{equation}
\label{quintc_energy}
E(u) = \int_{\mathbf{X}} |\nabla u(x)|^2\, \dd x + \frac{1}{3} \int_{\mathbf{X}} (\omega*|u|^2)^2|u(x)|^2 \, \dd x.
\end{equation}
The finiteness of \eqref{quintc_energy} is proved in Lemma \ref{energy_finite_lemma}.
\end{remark}
\begin{remark}
Let us remark on the assumptions on the interaction potential $V$. In many-body quantum mechanics, it is standard to assume that the interaction potential $V(x,x_1,\ldots,x_p)$ is symmetric under permutation of its arguments. We note that this assumption implies that $V'$ defined via $V'(x-x_1,\ldots,x-x_p) := V(x,x_1,\ldots,x_p)$ is symmetric under permutation of its arguments $x,x_1\ldots,x_p$ and that $V'(x_1,\ldots,x_p)$ be symmetric. We slightly abuse notation in this paper and write $V'=V$ throughout. We also note that in the two-body case, this implies that the interaction potential is even. 
\end{remark}

The well-posedness of \eqref{higher_order_NLS} has been widely studied in critical Sobolev spaces, for example  in \cite{CW90,CKSTT08} on $\mathbb{R}^3$ and in \cites{GOW14,HTT11,IP11,IP12b,Lee19,Wang13} and the references therein for periodic settings. The well-posedness in $\mathbb{R}^3$ of \eqref{cubic_Hartree} was studied in \cite{GV80}. Like \eqref{cubic_Hartree}, one can also realise \eqref{higher_order_Hartree} as a mean-field limit of a many-body Schr\"odinger equation. Namely, consider
\begin{equation*}
\left\{
	\begin{alignedat}{2}
		\ii \partial_t \Psi_N &= H_N \Psi_N, \\
        \Psi_N(x,0) &\sim u_{0}^{\otimes N},
	\end{alignedat}\right.
\end{equation*}
where
\begin{equation}
\label{quantum_hamiltonian}
H_N := \sum_{j=1}^N -\Delta_j + \frac{1}{N^{p}} \sum^N_{1 \leq i_0 < i_1 < \ldots < i_{p} \leq N} V(x_{i_0}-x_{i_1},\ldots,x_{i_0} - x_{i_p}).
\end{equation}
We note that each $x_i -x_j$ can be written as the difference of $x_{i_0} - x_j$ and $x_{i_0} - x_i$. Suppose that $V \in L^1(\mathbf{X}^p)$ is symmetric, with $V(x-x_1,\ldots,x-x_p)$ symmetric under permutation of its arguments $x,x_1\ldots,x_p$. Then 
\begin{equation}
\label{many_body_convergence}
``\Psi_{N} \to u"
\end{equation}
as $N \to \infty$, where $u$ is the solution to \eqref{higher_order_Hartree} with initial condition $u_0$.
The convergence in \eqref{many_body_convergence} is in terms of reduced density matrices. 
Results of this kind are known as {\it propagation of chaos} in the mathematical physics community. 
For the cubic Hartree equation, early results of this kind were proved in \cite{Hep74, Spo80}.
We direct the reader to \cite{BPS16} and the references therein for an overview of the topic.
In the case of a three-body interaction, the problem was studied in \cite{CP11, CH19, NS20}.
For mixed $p$-body interactions, the problem in $\R^{d}$ ($d=1,2$) with a potential converging to the delta function was studied in \cite{Xie15}. Higher-order Gross-Pitaevskii hierarchies were also considered in \cite{HTX16,CP10}. In recent years, the three-body interaction, which corresponds to $p=2$ in \eqref{quantum_hamiltonian}, has received much attention in the mathematical physics community. We direct the reader, for example, to \cite{ JV24, Lee20, NS20, NRT23, RS23} and the references within. In this paper, we only study the problem from the classical perspective.

\section{Main results}
\subsection{Statement of results}
We now state our main results.

\begin{proposition}[Local well-posedness of \eqref{higher_order_Hartree}]
\label{LWP_theorem}
Fix an integer $p \geq 2$. Suppose that $u_0 \in H^{s_c}(\mathbf{X})$ with $s_c$ as in \eqref{critical_sobolev} and $V \in L^1(\mathbf{X}^{p})$.
Then there exist a $T > 0$ --- depending on $\|u\|_{H^{s_c}}$, $\|V\|_{L^1}$, and $p$ ---
and a unique $u \in C_t((-T,T);H^{s_c}(\mathbf{X})) \cap X^{s_c}((-T,T))$ such that $u$ solves
\begin{equation*}
\left\{\begin{alignedat}{2}
\mathrm{i} \partial_t u + \Delta u &= \left(\int_{\mathbf{X}^p} V(x-x_1,\ldots,x - x_p) |u(x_1)|^2 \ldots |u(x_p)|^2 \, \dd x_1 \ldots \dd x_p \right)u(x), \\
u(x,0) &= u_0(x).
\end{alignedat}\right.
\end{equation*}
\end{proposition}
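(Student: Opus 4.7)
The plan is to solve \eqref{higher_order_Hartree} by a contraction-mapping argument in the critical function-space framework ($X^{s_c}$ for the solution and $N^{s_c}$ for the nonlinearity) on $\T^3$ due to Herr--Tataru--Tzvetkov, which underpins the critical NLS results in \cite{HTT11,IP12b,Lee19,Wang13} cited above. Concretely, I would rewrite the problem in Duhamel form as
\begin{equation*}
u(t) = \Phi(u)(t) := \e^{\ii t\Delta} u_0 - \ii \int_0^t \e^{\ii (t-s)\Delta}\, \mcN(u)(s)\, \dd s,
\end{equation*}
with
\begin{equation*}
\mcN(u)(x) := \biggl(\int_{\mathbf{X}^p} V(x-x_1,\ldots,x-x_p)\prod_{j=1}^{p}|u(x_j)|^2 \, \dd x_1\cdots \dd x_p \biggr) u(x),
\end{equation*}
and look for a fixed point of $\Phi$ in a ball of $X^{s_c}((-T,T))$.

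The standard ingredients are the linear homogeneous bound $\|\e^{\ii t\Delta}u_0\|_{X^{s_c}(I)} \lesssim \|u_0\|_{H^{s_c}}$, the inhomogeneous Duhamel bound $\bigl\|\int_0^t \e^{\ii(t-s)\Delta}F(s)\,\dd s\bigr\|_{X^{s_c}(I)} \lesssim \|F\|_{N^{s_c}(I)}$, and, crucially, the $(2p+1)$-linear nonlinear estimate
\begin{equation*}
\bigl\|\mcN(u_1,\ldots,u_{2p+1})\bigr\|_{N^{s_c}(I)} \lesssim \|V\|_{L^1} \prod_{j=1}^{2p+1}\|u_j\|_{X^{s_c}(I)},
\end{equation*}
where $\mcN(u_1,\ldots,u_{2p+1})$ denotes the symmetrisation of $\mcN$ as a $(2p+1)$-linear form. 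The key observation is that $V \in L^1(\mathbf{X}^p)$ has uniformly bounded Fourier coefficients, $|\widehat V(k_1,\ldots,k_p)| \leq \|V\|_{L^1}$; expanding the convolution as a Fourier series on $\T^3$, the potential enters only as a bounded multiplier acting on the frequencies associated with each factor $|u(x_j)|^2$. Consequently the Hartree multilinear estimate follows at once from the analogous critical multilinear estimate for the local nonlinearity $|u|^{2p}u$, with $\|V\|_{L^1}$ absorbed into the implicit constant.

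With these estimates in hand, I would close the contraction in a ball $B_R \subset X^{s_c}((-T,T))$ of appropriate radius. Since the problem is scaling-critical, the smallness that enforces the contraction is not produced by a crude time factor but by the fact that $\|\e^{\ii t\Delta}u_0\|_{X^{s_c}((-T,T))}$ shrinks as $T \to 0$ in a profile-dependent manner (equivalently, via a high-frequency truncation of $u_0$), which is a standard manoeuvre in this framework and yields a time $T = T(\|u_0\|_{H^{s_c}}, \|V\|_{L^1}, p)$. Uniqueness in $C_t((-T,T); H^{s_c})\cap X^{s_c}((-T,T))$ follows by applying the same multilinear estimate to the difference of two putative solutions, using the embedding $X^{s_c}((-T,T))\hookrightarrow C_t((-T,T); H^{s_c})$.

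The main obstacle is the critical multilinear estimate itself, which on $\T^3$ is the genuinely hard part of any critical local theory. The saving grace here is that, through the uniform boundedness of the Fourier coefficients of $V$, the Hartree nonlinearity is strictly tamer than its NLS counterpart: rather than proving a new estimate from scratch, one invokes the existing NLS multilinear estimates from the references in the introduction and absorbs $\|V\|_{L^1}$ into the constant.
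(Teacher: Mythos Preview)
Your overall plan matches the paper's: Duhamel formulation, contraction in $X^{s_c}$ via a $(2p+1)$-linear estimate of the form $\|\mathcal{N}(u_1,\ldots,u_{2p+1})\|_{N^{s_c}}\lesssim\|V\|_{L^1}\prod_j\|u_j\|_{X^{s_c}}$, and for large data the high-frequency truncation trick from \cite{HTT11}. The paper carries out exactly this argument (see Proposition~\ref{multilinear_lemma} for the multilinear estimate and the proof of Proposition~\ref{LWP_theorem} for the contraction), so the skeleton is right.

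There is, however, a gap in how you justify the multilinear estimate. You claim that since $|\widehat V(k_1,\ldots,k_p)|\leq\|V\|_{L^1}$, the Hartree estimate ``follows at once'' from the local $|u|^{2p}u$ estimate because $V$ enters only as a bounded Fourier multiplier. This is not a valid reduction: bounded Fourier multipliers do not in general preserve multilinear $L^q$ or $N^{s}$ bounds (the bilinear Hilbert transform is the archetypal obstruction), and the local multilinear estimates on $\T^3$ are proved via H\"older's inequality on products in physical space, a step that is \emph{not} stable under inserting an arbitrary bounded symbol $m(k_1,\ldots,k_p)$. The mere uniform bound on $\widehat V$ is too weak; you must use the full $L^1$ structure.

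What the paper does instead is a physical-space reduction. After the change of variables $x_j\mapsto x-x_j$, the nonlinearity becomes $\int_{\mathbf{X}^p}V(\mathbf{x})\prod_j \tilde u_{2j-1}(x-x_j)\tilde u_{2j}(x-x_j)\,\dd\mathbf{x}\cdot\tilde u_{2p+1}(x)$. One then passes to duality, takes a Littlewood--Paley decomposition, and pulls the $V$-integral outside by the triangle inequality. The inner $L^1_{t,x}$ norm is bounded by H\"older and Strichartz exactly as in the local case, and by translation invariance of the $L^q_{t,x}$ (and $Y^0$) norms the bound is independent of the shift $\mathbf{x}$; the remaining $\mathbf{x}$-integral then yields $\|V\|_{L^1}$. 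Equivalently: writing $\widehat V(k_1,\ldots,k_p)=\int V(\mathbf{y})\e^{-\ii\sum k_j y_j}\dd\mathbf{y}$ decomposes the multiplier as a superposition of \emph{translations}, and translations (unlike generic bounded multipliers) are isometries on all the relevant norms. That is the mechanism that makes the Hartree estimate no harder than the local one---not the bare bound $|\widehat V|\leq\|V\|_{L^1}$.
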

In the statement of Proposition \ref{LWP_theorem}, $X^{s}$ denotes the {\it atomic space} --- see Definition \ref{X^s_definition} below. 

We also prove the following global well-posedness theorem for small initial data.

\begin{proposition}[Small data global well-posedness of \eqref{higher_order_Hartree} for $p=2$]
\label{GWP_theorem}
Suppose that $p = 2$, let $s_c = 1$, and $V$ is as in \eqref{three_body_interaction} above.
Then there is an $\eps_0 > 0$ such that for every $u_0$ with $\|u_0\|_{H^1(\T^3)} < \eps_0$ and 
every $T > 0$, there is a unique $u \in C_t((-T,T);H^{s_c}(\mathbf{X})) \cap X^{s_c}((-T,T))$ such that $u$ solves 
\eqref{higher_order_Hartree}.
\end{proposition}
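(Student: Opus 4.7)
The plan is to upgrade the local existence given by Proposition \ref{LWP_theorem} to a global one by means of a coercivity estimate for the conserved energy \eqref{quintc_energy}, combined with a standard bootstrap/continuation argument on the $H^1$ norm. Throughout, I work with $p=2$ and the kernel $V$ defined in \eqref{three_body_interaction}.

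First, I would set up the conserved quantities. Proposition \ref{LWP_theorem} gives a local solution $u \in C_t((-T_0,T_0);H^1(\T^3)) \cap X^1((-T_0,T_0))$ for some $T_0 > 0$ depending on $\|u_0\|_{H^1}$ and $\|V\|_{L^1}$. A standard approximation argument — smooth the initial data, use conservation for smooth solutions, and pass to the limit using continuous dependence in $H^1$ — shows that $M(u(t)) = M(u_0)$ and $E(u(t)) = E(u_0)$ on the interval of existence, with $E$ given by \eqref{quintc_energy}.

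Next, I would prove the coercivity bound
\begin{equation*}
    \bigl|E(u) - \|\nabla u\|_{L^2}^2\bigr| \;\leq\; C\,\|\omega\|_{L^1}^2\,\|u\|_{H^1}^6
\end{equation*}
(this is essentially the content invoked through Lemma \ref{energy_finite_lemma}). Indeed, by H\"older and Young's convolution inequalities,
\begin{equation*}
    \int_{\T^3}(\omega*|u|^2)^2|u|^2\,\dd x
    \;\leq\; \|\omega*|u|^2\|_{L^3}^2\,\|u\|_{L^6}^2
    \;\leq\; \|\omega\|_{L^1}^2\,\|u\|_{L^6}^6,
\end{equation*}
and the Sobolev embedding $H^1(\T^3) \hookrightarrow L^6(\T^3)$ bounds the right-hand side by $C\|u\|_{H^1}^6$. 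Combining this estimate with conservation of energy and mass yields
\begin{equation*}
    \|\nabla u(t)\|_{L^2}^2 \;\leq\; \|\nabla u_0\|_{L^2}^2 + C\|\omega\|_{L^1}^2\bigl(\|u_0\|_{H^1}^6 + \|u(t)\|_{H^1}^6\bigr).
\end{equation*}
Adding $\|u_0\|_{L^2}^2 = \|u(t)\|_{L^2}^2$ to both sides, the function $f(t) := \|u(t)\|_{H^1}^2$ satisfies a polynomial inequality of the form $f(t) \leq f(0) + C'f(0)^3 + C' f(t)^3$. A standard continuity argument then shows that if $\eps_0$ is chosen small enough so that $f(0) < \eps_0^2$ forces the only solution in the relevant range to stay near $f(0)$, then $\|u(t)\|_{H^1} \leq 2\|u_0\|_{H^1}$ on the whole existence interval.

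Finally, I would invoke the continuation principle implicit in Proposition \ref{LWP_theorem}: the local existence time depends only on $\|u(t_0)\|_{H^1}$ and $\|V\|_{L^1}$, so the uniform a priori bound on $\|u(t)\|_{H^1}$ obtained above allows the local solution to be reopened at every time with a uniform time-step, yielding a solution on $(-T,T)$ for any $T > 0$. Uniqueness is inherited from Proposition \ref{LWP_theorem} on each iterate and the fact that two solutions agreeing at a time must agree on any interval where both exist. The main technical point is the justification of energy conservation at the $H^1$ critical regularity: one must check that the local solutions provided by the $X^{s_c}$ theory have enough continuity in $H^1$ (together with stability under approximation by smoother data) for the formal computation $\frac{\dd}{\dd t} E(u(t)) = 0$ to be rigorous; this is where the atomic space structure of the local theory plays its role.
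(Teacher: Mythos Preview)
Your proposal is correct and follows essentially the same route as the paper: use conservation of $M$ and $E$, the coercivity bound $|E(u)-\|\nabla u\|_{L^2}^2|\lesssim\|\omega\|_{L^1}^2\|u\|_{H^1}^6$ (which is exactly Lemma~\ref{energy_finite_lemma}), a continuity argument on $t\mapsto\|u(t)\|_{H^1}^2$, and then iterate the small-data local theory. The paper writes the continuity step slightly more explicitly via the auxiliary function $f(x)=x-\tfrac13 Kx^3$ and chooses $\eps_0$ so that $\|u(t)\|_{H^1}$ stays below the small-data threshold $\eps$ of Proposition~\ref{LWP_theorem}; your ``$f(t)\le f(0)+C'f(0)^3+C'f(t)^3$ implies $f(t)\le 2f(0)$'' is the same argument in different dress. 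One caution worth making explicit: at the critical regularity the local existence time is not, in general, a function of the $H^1$ norm alone for large data, so the iteration really rests on staying in the \emph{small-data} regime where the time step is uniform---your choice of $\eps_0$ should be tied to the $\eps$ in the small-data contraction, as the paper does.
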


\begin{theorem}[Convergence of the Hartree equation to the NLS]
\label{Convergence_theorem}
Fix an integer $p \geq 2$. Let $u_0 \in H^{s_c}$ with $s_c$ as in \eqref{critical_sobolev} and suppose that $V_N \to \delta$ as $N \to \infty$ with $V_N \geq 0$ and $\int V_N = 1$ for all $N$. Consider the equations given by
\begin{equation}
\label{p_Hartree}
\left\{
\begin{alignedat}{2}
    \mathrm{i} \partial_t u^N + \Delta u^N &= \pm \bigg(\int_{\mathbf{X}^p} V_N(x-x_1,\ldots,x - x_p) |u^N(x_1)|^2 \ldots\\
        &\qquad\qquad\qquad\qquad\ldots|u^N(x_p)|^2 \times \dd x_1 \ldots \dd x_p \bigg)u^N(x),\\
u^N(x,0) &= u_0(x),
\end{alignedat}\right.
\end{equation}
and
\begin{equation}
\label{p_NLS}
\left\{
\begin{alignedat}{2}
\mathrm{i} \partial_t u + \Delta u &= \pm |u|^{2p}u, \\
u(x,0) &= u_0(x).
\end{alignedat}\right.
\end{equation}
Let $T$ be less than the minimum of the times of existence of the solutions to \eqref{p_Hartree} and \eqref{p_NLS}. Then
\begin{equation}
\label{convergence_result}
\lim_{N \to \infty} \|u - u^N\|_{L^\infty_{[-T,T]}H^{s_c}(\T^3)} = 0.
\end{equation}
\end{theorem}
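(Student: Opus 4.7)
The plan is to apply Duhamel's formula to $w_N := u - u^N$ and bound $w_N$ in the critical atomic space using the same multilinear estimates that underlie Proposition~\ref{LWP_theorem}. Since $\|V_N\|_{L^1}=1$ uniformly in $N$, Proposition~\ref{LWP_theorem} furnishes a local existence time $T_* > 0$ independent of $N$ together with a uniform bound
\[
    A := \|u\|_{X^{s_c}([-T_*,T_*])} + \sup_{N} \|u^N\|_{X^{s_c}([-T_*,T_*])} < \infty.
\]
It suffices to prove $\|w_N\|_{X^{s_c}([-T_*,T_*])} \to 0$; the result on $[-T,T]$ then follows by iterating on successive subintervals of length $T_*$, taking the (vanishing) $H^{s_c}$ values of $w_N$ at each endpoint as the new initial data.

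Writing $\mathcal{N}(u) = \pm|u|^{2p}u$ for the NLS nonlinearity and $\mathcal{N}_N$ for the Hartree nonlinearity in \eqref{p_Hartree}, the difference $w_N$ solves the linear Schr\"odinger equation with zero data and forcing
\[
    \mathcal{N}(u) - \mathcal{N}_N(u^N) = \underbrace{\bigl[\mathcal{N}_N(u) - \mathcal{N}_N(u^N)\bigr]}_{=: \Phi_N} + \underbrace{\bigl[\mathcal{N}(u) - \mathcal{N}_N(u)\bigr]}_{=: r_N}.
\]
The piece $\Phi_N$ is a $(2p+1)$-multilinear expression involving $u$, $u^N$ and $w_N$, and the multilinear estimates from the proof of Proposition~\ref{LWP_theorem} (which only use $\|V_N\|_{L^1}=1$) give, in the norm $\|\cdot\|_{\mathrm{dual}}$ dual to the atomic space,
\[
    \|\Phi_N\|_{\mathrm{dual}} \,\lesssim\, T_*^\theta A^{2p}\, \|w_N\|_{X^{s_c}([-T_*,T_*])}
\]
for some $\theta > 0$. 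Shrinking $T_*$ (while keeping it $N$-independent) so the prefactor is $\le 1/2$ allows $\Phi_N$ to be absorbed into the left-hand side of the Duhamel estimate for $\|w_N\|_{X^{s_c}}$.

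The main obstacle is to show that the source term $r_N$ tends to zero in $\|\cdot\|_{\mathrm{dual}}$, since we work at the scaling-critical regularity where compactness is unavailable. I would handle $r_N$ by a Littlewood--Paley truncation $u = u_{\le K} + u_{>K}$. The same multilinear estimates give
\[
    \bigl\|\mathcal{N}(u) - \mathcal{N}(u_{\le K})\bigr\|_{\mathrm{dual}} + \sup_{N} \bigl\|\mathcal{N}_N(u) - \mathcal{N}_N(u_{\le K})\bigr\|_{\mathrm{dual}} \,\lesssim\, A^{2p}\, \|u - u_{\le K}\|_{X^{s_c}([-T_*,T_*])},
\]
which tends to $0$ as $K \to \infty$ by dominated convergence in the Littlewood--Paley decomposition. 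The problem thus reduces to showing that for each fixed $K$,
\[
    \bigl\|\mathcal{N}(u_{\le K}) - \mathcal{N}_N(u_{\le K})\bigr\|_{\mathrm{dual}} \xrightarrow{N \to \infty} 0.
\]
Since $u_{\le K}$ is smooth in $x$ with uniform bounds, standard approximation-of-the-identity arguments (using $V_N \ge 0$, $\int V_N = 1$, and $V_N \to \delta$) show that $V_N * (|u_{\le K}|^2)^{\otimes p} \to |u_{\le K}|^{2p}$ uniformly on $\T^3$, and multiplying by $u_{\le K}$ preserves convergence in the dual norm. An $\varepsilon/2$ argument (first $K$ large, then $N$ large) completes the proof that $\|r_N\|_{\mathrm{dual}} \to 0$.

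Combining the two bounds in Duhamel's formula yields $\|w_N\|_{X^{s_c}([-T_*,T_*])} \le 2\|r_N\|_{\mathrm{dual}} \to 0$, and iterating on subintervals covers $[-T,T]$. The main technical difficulty, as indicated, is the convergence of $r_N$ at critical regularity, where the frequency-truncation argument is what converts the weak limit $V_N \to \delta$ (meaningful only on regular functions) into convergence in the critical norm.
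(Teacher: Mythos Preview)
Your overall architecture --- split the difference of nonlinearities into a term multilinear in $w_N$ plus a source term $r_N$, absorb the first and show the second vanishes --- matches the paper. However, the step where you absorb $\Phi_N$ has a genuine gap. You claim
\[
    \|\Phi_N\|_{\mathrm{dual}} \,\lesssim\, T_*^\theta A^{2p}\, \|w_N\|_{X^{s_c}}
\]
for some $\theta>0$, but at the \emph{scaling-critical} regularity $s_c$ the multilinear estimate of Proposition~\ref{multilinear_lemma} carries \emph{no} power of $|I|$ (compare with Proposition~\ref{mixed_multlinear_proposition}, which produces an $|I|^c$ only because the nonlinearity there is subcritical for $H^{s_c(p_2)}$). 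There is therefore no mechanism to make the prefactor $\le 1/2$ by shrinking $T_*$, and the absorption argument as written fails.

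The paper replaces the nonexistent time-smallness by a frequency-smallness that is already built into the large-data local well-posedness construction: the solutions $u$ and $u^N$ lie in the set
\[
    B=\{v:\ \|v\|_{X^{s_c}}\le 2A,\ \|P_{>M}v\|_{X^{s_c}}\le 2\delta\}
\]
of \eqref{contraction_set_big_data}, with $M$ and $\delta$ uniform in $N$. One then expands $1=P_{\le M}+P_{>M}$ on the factors in $\Phi_N$ (this is exactly the decomposition in \eqref{product_rewrite}). Every term containing at least one $P_{>M}$ is bounded via Proposition~\ref{multilinear_lemma} by $C\delta\,\|w_N\|_{X^{s_c}}$ and is absorbed by choosing $\delta$ small; the single all-low-frequency term is handled by Bernstein ($\|P_{\le M}u^N\|_{L^\infty_{t,x}}\lesssim M^{3/2-s_c}A$) together with $\|F\|_{N^{s_c}}\lesssim\|F\|_{L^1_tH^{s_c}_x}$, which \emph{does} produce a factor of $T$ that can be made small relative to $M$ and $A$. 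So the smallness needed for absorption comes from $\delta$ and from $T$ only on a finite-dimensional (in frequency) piece, not from a global $T^\theta$ gain.

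Your treatment of $r_N$ via frequency truncation $u=u_{\le K}+u_{>K}$ is sound and close in spirit to the paper; the paper instead writes $|u(x-x_j)|^2-|u(x)|^2$ and uses shrinking support of $V_N$ together with continuity of translations in $X^{s_c}$, but either route works for this term. The substantive correction you need is in the $\Phi_N$ step.
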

We make the following remarks about our results.
\begin{remark}
We note that we can define the time of existence $T > 0$ in Theorem \ref{Convergence_theorem} because the time of existence in Proposition \ref{LWP_theorem} depends on $\|V\|_{L^1}$, which is uniform in $N$ in Theorem \ref{Convergence_theorem}. In the particular case where $p=2$, equations \eqref{p_Hartree} and \eqref{p_NLS} are globally well-posed in the defocusing case for $N$ sufficiently large. The global well-posedness of \eqref{p_Hartree} for a specific sequence of $V_N$ for $p=2$ is proved in \cite[Appendix A]{NS20} by viewing it as a small perturbation of the quintic NLS, which was shown to be well-posed in \cite{IP11}. The convergence to the local NLS is also proved, and the proofs of both results easily generalises to any positive sequence of potentials $V_N \to \delta$.
\end{remark}
\begin{remark}
Let us remark that our results would also hold for an irrational torus. Indeed, our proofs are based on Strichartz estimates, first proved in the periodic setting in \cite{Bou93}. These were extended to irrational tori in \cite{BD15}; see also \cite{KV16}. For simplicity of presentation, we restrict ourselves to the case of rational tori.
\end{remark}
\begin{remark}
We note that the assumption $V\in L^1(\mathbf{X}^{3p})$ is not a restrictive assumption. Indeed, if one takes $V$ to be of the form of \eqref{three_body_interaction}, changing variables we find that $$V(x,y) = \frac{1}{3}(\omega(x)\omega(y) + \omega(x)\omega(x-y) + \omega(y)\omega(x-y).$$ Applying H\"older's inequality, one finds that the Coulomb potential $\omega(x) = \frac{1}{|x|}$ is a permitted interaction potential in three dimensions. We recall that because we work on a torus, we only need to account for integrability around the origin.
\end{remark}
\begin{remark}
We note that the convergence of the Hartree equation to the NLS is of particular interest from the view of mathematical physics. Indeed, the Hartree equation is often a convenient intermediate step between the quantum mechanical problem and the NLS. For example, in \cite{NS20} for the case $p=2$, the authors consider a Hamiltonian of the form
\begin{equation*}
H_N := \sum_{j=1}^N -\Delta_j + \frac{1}{N^2} \sum_{1 \leq i < j < k \leq N} N^{6\beta} V(N^\beta(x_i-x_j),N^{\beta}(x_j-x_k)),
\end{equation*} 
and then proceed via an intermediate Hartree equation --- see \cite[Appendix A]{NS20} for the problem on $\T^3$. Here we note that the Hartree equation corresponds to the effective equation for $\beta = 0$, so corresponds to a mean-field limit. The convergence of the Hartree equation to the local NLS is then used to obtain convergence of the many-body problem to the local NLS. 

It is however, as far as the authors are aware, not possible to obtain an equation which includes the scattering length using this kind of method. Indeed, if one goes via the Hartree equation, the best one can obtain is the  coupling constant (strength of the nonlinearity) $\int V(x) \, \dd x$. We direct the reader to \cite[Section 2]{NS20} and the references therein for a more detailed discussion. 
\end{remark}

\begin{remark}
We remark that one can also generalise the notion of the Hartree equation by considering more singular convolution potentials.
Indeed, in the mathematical physics literature, equations of this form are also called 
(generalised) Hartree equations.
Equations with more singular convolution potentials were considered in the case $\mathbf{X} = \R^d$ 
in \cite{AR20,ARR22,GX23} and the references within.

For instance, if we take $V$ of the form $|x|^{-b}$ with $b > d$
(so that $V$ just misses being in $L^1$), then the scaling argument in \cite{AR20} shows
that the critical Sobolev space exponent becomes \[
    s_c = \frac{d}{2} - \frac{d-b+2}{2p},
\] which is larger than the critical regularity in (\ref{critical_sobolev}) --- there for $d=3$.
(Note that our $p$ is $p-1$ in \cite{AR20}.)
Thus, as one might expect, more singular potentials come at a cost in regularity.
We leave consideration of these cases to future work.

\end{remark}
\subsection{Hartree equation with mixed nonlinearity}

We also consider the case of mixed Hartree-type nonlinearities. For $p \in \N$, write
\begin{equation}
\label{mathcal_N}
\mathcal{N}_p(u) := \left(\int_{\mathbf{X}^p} V(x-x_1,\ldots,x - x_p) |u(x_1)|^2 \ldots |u(x_p)|^2 \, \dd x_1 \ldots \dd x_p \right)u(x).
\end{equation}
We consider the nonlocal NLS given by
\begin{equation}
\label{mixed_nonlinearity}
\ii \partial_t u + \Delta u = \lambda_1 \mathcal{N}_{p_1}(u) + \lambda_2 \mathcal{N}_{p_2}(u),
\end{equation}
where $p_1 < p_2$ and $p_i \in \N$ and $\lambda_i \in \R$.
The $\lambda_i$ can be interpreted as coupling constants, determining the strength of the nonlinearities.
We argue that when considering nonlinearities of this form, we are able to treat \eqref{mixed_nonlinearity} as a perturbation of the Hartree equation with nonlinearity
$\mathcal{N}_{p_2}(u)$, so we can consider the local well-posedness in $H^{s_c(p_2)}$.
Moreover, when $p_1 = 1$, $p_2 = 2$, and $V_{p=2} \approx \delta_{2}$,
we can treat \eqref{mixed_nonlinearity} as a perturbation of the defocusing (local) quintic NLS.
In this way, we can extract global well-posedness properties for \eqref{mixed_nonlinearity} in 
$H^1$ without restriction on the size of the initial data.
Here, $\delta_p$ denotes the $\delta$ distribution in $p$ variables.
This idea comes from \cite{LYYZ23}, where the authors consider a local cubic-quintic nonlinear Schr\"odinger equation, which formally corresponds to taking
\begin{equation*}
    V_{p_1=1} = \delta_{1}, \quad V_{p_2=2} = \delta_{2}.
\end{equation*}
We have the following local well-posedness result.
\begin{proposition}
\label{mixed_nonlinearity_LWP}
Suppose that $p_2 > p_1$ and $s_c = s_c(p_2)$ is as in \eqref{critical_sobolev} with $p=p_2$. Suppose that $V_{p_i} \in L^1(\mathbf{X}^{p_i})$ and $\lambda_i \in \R \setminus \{0\}$.
Moreover, suppose $u_0 \in H^{s_c}$. Then there exist a $T > 0$ --- depending on $\|u\|_{H^{s_c}}$, $\|V\|_{L^1}$, and $p$ ---
and a unique $u \in C_t((-T,T);H^{s_c}(\mathbf{X})) \cap X^{s_c}((-T,T))$ which solves \eqref{mixed_nonlinearity}.
\end{proposition}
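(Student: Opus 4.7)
The strategy, already advertised in the paragraph preceding the proposition, is a Picard iteration in
\[
Y_T := X^{s_c}((-T,T)) \cap C_t((-T,T);H^{s_c}(\mathbf{X})), \qquad s_c = s_c(p_2),
\]
in which the higher-order term $\lambda_2 \mcN_{p_2}(u)$ is controlled by the critical multilinear estimates already established in the proof of Proposition~\ref{LWP_theorem}, while the lower-order term $\lambda_1 \mcN_{p_1}(u)$ is absorbed as a subcritical perturbation. Concretely, I would work with the Duhamel map
\[
\Phi(u)(t) := \e^{\ii t \Delta} u_0 - \ii \int_0^t \e^{\ii(t-s)\Delta}\left[\lambda_1 \mcN_{p_1}(u(s)) + \lambda_2 \mcN_{p_2}(u(s))\right]\,\dd s
\]
and show that, for $T$ sufficiently small, $\Phi$ is a contraction on the ball $\{u \in Y_T : \|u\|_{Y_T} \leq 2\|u_0\|_{H^{s_c}}\}$.

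For the $\mcN_{p_2}$ piece, the nonlinear estimate from Proposition~\ref{LWP_theorem} gives
\[
\|\mcN_{p_2}(u)\|_{N^{s_c}} \lesssim \|V_{p_2}\|_{L^1}\,\|u\|_{X^{s_c}}^{2p_2+1},
\]
together with the analogous Lipschitz bound on differences $\mcN_{p_2}(u) - \mcN_{p_2}(v)$. This is the critical estimate, and in it $T$ only enters through the size of the data on the interval. For the $\mcN_{p_1}$ piece, the key observation is that
\[
s_c(p_2) - s_c(p_1) = \frac{1}{p_1} - \frac{1}{p_2} > 0,
\]
so we are working strictly above the scaling-critical regularity of the $p_1$-body nonlinearity. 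The plan is to exploit this regularity excess to gain a positive power of $T$: one proves
\[
\|\mcN_{p_1}(u)\|_{N^{s_c}((-T,T))} \lesssim T^{\alpha}\, \|V_{p_1}\|_{L^1}\, \|u\|_{X^{s_c}((-T,T))}^{2p_1+1}
\]
for some $\alpha = \alpha(p_1,p_2) > 0$, together with the matching difference estimate. Granted this, an elementary choice of $T$ small (depending only on $\|u_0\|_{H^{s_c}}$, $|\lambda_i|$, $\|V_{p_i}\|_{L^1}$, $p_1$, $p_2$) closes the contraction, and uniqueness and persistence in $C_t H^{s_c}$ follow by the standard arguments used for Proposition~\ref{LWP_theorem}.

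The main technical obstacle is producing the subcritical bound for $\mcN_{p_1}$. The intended route is to repeat the multilinear estimate that gave the critical bound for $\mcN_{p_1}$ in $H^{s_c(p_1)}$, but now inserting the extra $1/p_1 - 1/p_2$ derivatives. Via Sobolev embedding these can be traded for additional spatial integrability, which in turn can be exchanged on a bounded time interval, through Hölder in time and the embedding of $X^{s_c}([-T,T])$ into appropriate Strichartz spaces $L^q_tL^r_x$ with $(q,r)$ strictly inside the admissible range for the $p_1$-nonlinearity. The resulting Hölder loss in time is exactly the factor $T^\alpha$ one needs. Once this bookkeeping is carried out, the rest of the argument is a verbatim repetition of the fixed-point scheme used for Proposition~\ref{LWP_theorem}.
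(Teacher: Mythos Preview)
Your approach is essentially the paper's: a contraction in $X^{s_c}$ using the critical multilinear estimate (Proposition~\ref{multilinear_lemma}) for $\mcN_{p_2}$ and a subcritical estimate with a gain $T^{\alpha}$ for $\mcN_{p_1}$, the latter being exactly Proposition~\ref{mixed_multlinear_proposition} in the paper, proved just as you describe (H\"older in time against a Strichartz exponent $q>4$ to extract $|I|^{(q-4)/4q}$).

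One point deserves care. The ball $\{\|u\|_{Y_T}\leq 2\|u_0\|_{H^{s_c}}\}$ together with ``$T$ small'' will \emph{not} close the contraction for the critical term $\mcN_{p_2}$ when $\|u_0\|_{H^{s_c}}$ is large: the estimate $\|\mcN_{p_2}(u)\|_{N^{s_c}}\lesssim \|u\|_{X^{s_c}}^{2p_2+1}$ carries no factor of $T$, so shrinking the interval alone gains nothing there. For general data one must use the large-data scheme from Proposition~\ref{LWP_theorem} (after \cite{HTT11}), namely the set
\[
\{u:\ \|u\|_{X^{s_c}}\leq 2A,\ \|P_{>M}u\|_{X^{s_c}}\leq 2\delta\},
\]
with smallness coming from the high-frequency tail rather than from $T$. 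The paper's own proof is equally brief on this point and simply defers to Proposition~\ref{LWP_theorem}; your reference to ``the standard arguments used for Proposition~\ref{LWP_theorem}'' covers it, but the sentence ``an elementary choice of $T$ small \ldots\ closes the contraction'' is misleading as written.
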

We have the corresponding global well-posedness result for small initial data.
\begin{proposition}
\label{mixed_nonlinearity_GWP}
Suppose that $p_1 = 1$, $p_2 = 2$, and let $s_c = 1$. Suppose further that $\lambda_i \in \R$ with $V_1,V_2$ as above corresponding to even $\omega,\omega' \in L^1(\mathbf{X})$.
Then there is $\eps_0 > 0$ such that for every $u_0$ with $\|u_0\|_{H^1(\T^3)} < \eps_0$ and every $T > 0$, there is a unique $u \in C_t((-T,T);H^{s_c}(\mathbf{X})) \cap X^{s_c}((-T,T))$ such that $u$ solves \eqref{mixed_nonlinearity}.
\end{proposition}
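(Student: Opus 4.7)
The plan is to run a single Banach fixed-point argument in the critical atomic space $X^1((-T,T))$ for \emph{arbitrary} $T>0$, using the smallness of $\|u_0\|_{H^1}$ to close the contraction with constants \emph{independent of $T$}; global existence then follows in one stroke, without any continuation or conserved-quantity argument. Define the Duhamel operator
\[
    \Phi(u)(t) := \e^{\ii t \Delta} u_0 - \ii \int_0^t \e^{\ii(t-s)\Delta}\bigl[\lambda_1 \mcN_1(u) + \lambda_2 \mcN_2(u)\bigr](s)\,\dd s,
\]
and recall from the linear / Duhamel theory of the $X^s$, $N^s$ spaces that
\[
    \|\Phi(u)\|_{X^1((-T,T))} \lesssim \|u_0\|_{H^1} + |\lambda_1|\,\|\mcN_1(u)\|_{N^1((-T,T))} + |\lambda_2|\,\|\mcN_2(u)\|_{N^1((-T,T))},
\]
together with the analogous Lipschitz bound for $\Phi(u)-\Phi(v)$.

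The two multilinear ingredients needed are, at regularity $s=1$,
\[
    \|\mcN_2(u)-\mcN_2(v)\|_{N^1} \lesssim \|V_2\|_{L^1}\bigl(\|u\|_{X^1}+\|v\|_{X^1}\bigr)^4 \|u-v\|_{X^1},
\]
\[
    \|\mcN_1(u)-\mcN_1(v)\|_{N^1} \lesssim \|V_1\|_{L^1}\bigl(\|u\|_{X^1}+\|v\|_{X^1}\bigr)^2 \|u-v\|_{X^1}.
\]
The first is exactly the critical multilinear estimate underlying Proposition \ref{GWP_theorem}, so it is already in hand. The second concerns the cubic Hartree nonlinearity, which is \emph{subcritical} in $H^1$ since $s_c(1)=1/2 < 1$; applying Young's inequality to absorb $\omega'$ and then deploying the same $L^p_tL^q_x$ Strichartz estimates that underpin Proposition \ref{LWP_theorem} for $p=1$, together with the embedding $H^1(\T^3) \hookrightarrow L^6(\T^3)$, yields this bound directly. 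Subcriticality only helps here: no $T$-dependent factor arises because we work at regularity $s = 1 > s_c(1)$ and retain the scaling-invariant $X^{s}$ structure.

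Combining these bounds in the ball $B_R \subset X^1((-T,T))$ of radius $R$ yields
\[
    \|\Phi(u)-\Phi(v)\|_{X^1} \leq C\bigl(|\lambda_1|\|V_1\|_{L^1}R^2 + |\lambda_2|\|V_2\|_{L^1}R^4\bigr)\|u-v\|_{X^1},
\]
and similarly $\|\Phi(u)\|_{X^1} \leq C\|u_0\|_{H^1} + \tfrac12 R$ if $R$ is chosen appropriately. Picking $R := 2C\eps_0$ and then $\eps_0$ so small that $C|\lambda_1|\|V_1\|_{L^1}R^2 + C|\lambda_2|\|V_2\|_{L^1}R^4 \leq 1/2$, the map $\Phi\colon B_R\to B_R$ is a contraction, and its unique fixed point is the desired solution. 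Since $R$ and $\eps_0$ are independent of $T$, the construction produces a solution on $(-T,T)$ for every $T>0$.

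The main obstacle is the cubic multilinear estimate in the critical $X^1$ norm: compared with the quintic case, one has only two factors $|u|^2$ to distribute, so the frequency-localised trilinear estimate has less "room" to absorb two derivatives. In practice this is handled by bounding $\omega' * |u|^2$ in $L^\infty_x$ via Young's inequality (costing only $\|V_1\|_{L^1}$) and then placing the remaining $u$ in a Strichartz norm controlled by $X^1$; the subcriticality means one never saturates the Strichartz exponents, so the argument is in fact strictly easier than the quintic counterpart established for Proposition \ref{GWP_theorem}.
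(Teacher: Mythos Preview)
Your approach has a genuine gap: on the torus $\T^3$, the Strichartz estimates --- and hence the multilinear estimates built on them --- are \emph{not} uniform in the length of the time interval. In the paper, Proposition~\ref{multilinear_lemma} is stated only for $|I|<1$, and the subcritical cubic estimate (Proposition~\ref{mixed_multlinear_proposition}) actually carries a factor $|I|^{c}$ with $c>0$, which grows without bound as $|I|\to\infty$. This is not an artefact of the proof: linear Schr\"odinger evolution on $\T^3$ does not disperse globally in time, so there is no global Strichartz estimate to feed into a single fixed-point argument on $(-T,T)$ for arbitrary $T$. Your claim that ``no $T$-dependent factor arises'' is therefore incorrect, and the contraction constants in your argument necessarily blow up with $T$.

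This is precisely why the paper proceeds differently. It first runs the local fixed-point argument on a short interval (Proposition~\ref{mixed_nonlinearity_LWP}), and then uses conservation of mass and energy to bound $\|u(t)\|_{H^1}$ uniformly in $t$, which allows the local argument to be iterated indefinitely. In the defocusing case this is immediate; in the general case (no sign assumption on $\omega,\omega'$ or $\lambda_i$) one needs a continuity argument based on the polynomial $f(x)=x-\tfrac12 K_1 x^2-\tfrac13 K_2 x^3$ to trap $\|u(t)\|_{H^1}^2$ below the local-theory threshold. Your proposal bypasses the conserved quantities entirely, but on the torus that shortcut is not available.
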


We prove the following global well-posedness result, which does not require a smallness of initial condition assumption. It does, however, require that we are a perturbation of the quintic NLS. The result is similar to the statement of \cite[Theorem 1.2]{LYYZ23}, which proves the corresponding result for local interactions. 

In what follows, we will use $\mathcal{N}_{2,N}$ to denote a nonlinearity of the form of \eqref{mathcal_N} defined with a potential $V_{2,N}$. Recall that we denote by $\delta_2$ the delta distribution on functions from $\mathbf{X}^2$.

\begin{theorem}
\label{GWP_mixed_nonlinearity_close_quintic}
Suppose that $V_{2,N} \to \delta_2$ with respect to continuous functions with $V_{2,N}$ as in \eqref{three_body_interaction}, and let $u_0 \in H^1(\mathbf{X})$. Let $\omega' \in L^1(\mathbf{X})$ and $\lambda \in \R$. Moreover, suppose that we have the estimate
\begin{equation}
\label{H1_estimate}
\|\nabla u\|^2_{L^\infty_t L^2_x} \lesssim M(u) + E(u).
\end{equation}
Then, for $N$ sufficiently large, the Cauchy problem given by
\begin{align}
\label{mixed_nonlinearity_equation_GWP_equation}
\left\{
\begin{alignedat}{2}
\ii \partial_t u + \Delta u &= \lambda \left(\int |u(y)|^2\omega'(x-y) \, \dd y \right) u(x)\\
    &\qquad+ \left(\int V_{2,N}(x-y,x-z)|u(y)|^2|u(z)|^2 \,\dd y \, \dd z \right) u(x), \\
u_0 &\in H^1(\mathbf{X})
\end{alignedat}
\right.
\end{align}
is globally well-posed.
\end{theorem}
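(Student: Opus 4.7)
The plan is to view (\ref{mixed_nonlinearity_equation_GWP_equation}) as a long-time perturbation of the defocusing quintic NLS on $\T^3$,
\begin{equation*}
\ii\partial_t v + \Delta v = |v|^4 v,\qquad v(0)=u_0,
\end{equation*}
which by Ionescu--Pausader \cite{IP11} is globally well-posed in $H^1(\T^3)$ with $v \in X^1([-T,T])$ for every $T>0$. Local well-posedness for (\ref{mixed_nonlinearity_equation_GWP_equation}) in $H^1$ follows from Proposition \ref{mixed_nonlinearity_LWP} applied with $p_1=1$ and $p_2=2$ (so $s_c=1$); the local existence time depends on $\|u_0\|_{H^1}$, $\|\omega'\|_{L^1}$ and $\|V_{2,N}\|_{L^1}$, the last of which is bounded uniformly in $N$ since $V_{2,N} \to \delta_2$ in distribution.

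The equation (\ref{mixed_nonlinearity_equation_GWP_equation}) is Hamiltonian with conserved mass $M(u)$ and conserved energy
\begin{multline*}
E(u) = \int_{\T^3} |\nabla u|^2\, \dd x + \frac{\lambda}{2}\int_{\T^6}\omega'(x-y)|u(y)|^2|u(x)|^2\, \dd x\, \dd y \\
+\frac{1}{3}\int_{\T^9} V_{2,N}(x-y,x-z)|u(x)|^2|u(y)|^2|u(z)|^2\, \dd x\, \dd y\, \dd z.
\end{multline*}
Combined with hypothesis (\ref{H1_estimate}), this yields a uniform $H^1$ bound on $u$ throughout its maximal interval of existence.

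Fix $T>0$, let $v$ be the quintic NLS solution with $v(0)=u_0$, and set $w:=u-v$. Then $w(0)=0$ and
\begin{equation*}
\ii\partial_t w + \Delta w = \lambda \mcN_1(u) + \bigl[\mcN_{2,N}(u) - |u|^4 u\bigr] + \bigl[|u|^4 u - |v|^4 v\bigr],
\end{equation*}
where $\mcN_1$ denotes the Hartree-type nonlinearity built from $\omega'$ and $\mcN_{2,N}$ the one built from $V_{2,N}$. The middle bracket tends to zero in a Strichartz-type norm as $N \to \infty$, since $V_{2,N} \to \delta_2$ against continuous functions and $u$ stays bounded in $X^1$ on short sub-intervals by the preceding step; the first term is $H^1$-subcritical (its critical regularity is $s_c = 1/2 < 1$) and hence gains a positive power of the sub-interval length via Strichartz; the third term is Lipschitz in $u-v$ once both are bounded in $X^1$. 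A long-time perturbation lemma for the quintic NLS on $\T^3$ (as in \cite{IP11,NS20}), after partitioning $[0,T]$ into finitely many sub-intervals on which $\|v\|_{X^1}$ is sufficiently small, then allows us to iteratively extend $u$ up to time $T$ once $N$ is taken large enough.

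The main obstacle is that the $\omega'$-term cannot be made small by enlarging $N$ and so must be treated as a \emph{non-small} perturbation. The saving grace is its strict subcriticality at $H^1$: on each sub-interval $I$ the corresponding nonlinear Strichartz estimate yields a factor $|I|^{\alpha}$ for some $\alpha > 0$ which, together with the a priori $H^1$ bound from the energy, closes the perturbation estimate --- provided the number of sub-intervals produced by quintic stability theory on $\T^3$ is finite, which is indeed the case on any compact time interval. Balancing this subcritical gain against the critical quintic perturbation is the crux of the argument.
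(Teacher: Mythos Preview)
Your outline is essentially the paper's strategy: compare to the global quintic NLS solution $v$ of \cite{IP11}, use the subcritical $|I|^{\alpha}$ gain for the cubic Hartree term, use $V_{2,N}\to\delta_2$ for the quintic Hartree--local discrepancy, and iterate over a finite partition. Two points need correcting.

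First, you partition so that $\|v\|_{X^1}$ is small on each sub-interval. This does not work: $\|\e^{\ii t\Delta}v_0\|_{X^1(I)}=\|v_0\|_{H^1}$ regardless of $|I|$, so the $X^1$ norm never becomes small. The paper (following \cite{IP11,LYYZ23}) partitions according to the interpolation norm $\|v\|_{Z'(I_j)}\leq\eta$, which \emph{does} vanish with $|I_j|$, and then uses the multilinear estimate \eqref{multilinear_estimate_p5} in which all but one factor sits in $Z'$. Without this refinement the quintic Lipschitz term $|u|^4u-|v|^4v$ cannot be absorbed on the left and the iteration fails.

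Second, your decomposition puts the Hartree--local discrepancy on the unknown, i.e.\ you estimate $\mathcal N_{2,N}(u)-|u|^4u$. Making this small via $V_{2,N}\to\delta_2$ requires a modulus of continuity of $u$ under spatial translation in $X^1$, which is not uniformly controlled by $\|u\|_{X^1}$ alone; the required $N$ would then depend on $u$ on each sub-interval rather than only on the data. The paper instead writes $\mathcal N_{2,N}(v+w)-|v|^4v$, expands multilinearly, and isolates $\mathcal N_{2,N}(v)-|v|^4v$, which involves only the \emph{known} global solution $v$; the remaining terms all carry at least one factor of $w$ and close via the $Z'$-based multilinear estimate. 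This reorganisation is what lets one fix $N$ depending only on $\|u(a)\|_{H^1}$ (controlled by \eqref{H1_estimate}) before running the contraction for $w$.
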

\begin{remark}
We remark that we can interpret Theorem \ref{GWP_mixed_nonlinearity_close_quintic} as saying that adding an arbitrary small quintic non-linearity allows us to ensure the global well-posedness of \eqref{mixed_nonlinearity}. Indeed, we can scale the equation accordingly to absorb to get the same result for $\lambda_2 V_{2,N}$ for any $\lambda_2 > 0$.
\end{remark}
\begin{remark}
Let us comment that \eqref{H1_estimate} gives us a bound on the kinetic energy of the solution and allows  us to build the global well-posedness. Remark \ref{H1_estimate_remark} explores when \eqref{H1_estimate} holds, but it can be heuristically be understood as a condition which ensures the two interaction potentials $V_1,V_2$ can be compared. It is the nonlocal analogue of the kinetic energy bound \cite[Lemma 2.10]{LYYZ23}, which holds for all $\lambda_1 \in \R$ and $\lambda_2 > 0$ due to the local interactions.
\end{remark}
Finally, we prove the following analogue of Theorem \ref{Convergence_theorem} for \eqref{mixed_nonlinearity}.
\begin{corollary}
\label{mixed_convergence_theorem}
Suppose that $p_2 > p_1$ with $p_i \in \N$. Moreover let $s_c = s_c(p_2)$ is as in \eqref{critical_sobolev} with $p=p_2$. Suppose further that $u_0 \in H^{s_c}$, $V_{p_i,N} \to \delta_{p_i}$ with $\|V_{p_i,N}\|_{L^1(X^{p_i})} = 1$ for $i=1,2$, and $V_{p_i,N} \geq 0$. Consider the equations given by
\begin{equation*}
\left\{\begin{alignedat}{2}
\ii \partial_t u^N + \Delta u^N &= \pm \mc{N}_{p_1,N}(u^N) \pm \mc{N}_{p_2,N}(u^N), \\
u^N(x,0) &= u_0(x)
\end{alignedat}\right.
\end{equation*}
and
\begin{equation}
\label{mixed_local}
\left\{\begin{alignedat}{2}
\ii \partial_t u + \Delta u &= \pm |u|^{2p_1}u \pm |u|^{2p_2}u, \\
u(x,0) &= u_0(x).
\end{alignedat}\right.
\end{equation}
Suppose that $T^N$ and $T$ are the maximum times of existence of $u^N$ and $u$ respectively and suppose $T'< \min_N\{T_N,T\}$. Then
\begin{equation}
\label{mixed_nonlinearity_convergence_result}
\lim_{N \to \infty} \|u - u^N\|_{L^\infty_{[-T',T']}H^{s_c}(\T^3)} = 0.
\end{equation}
\end{corollary}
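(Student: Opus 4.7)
The proof follows the blueprint of Theorem \ref{Convergence_theorem}, handling the two nonlinearities in parallel. Let $w^N := u - u^N$, which solves a Schr\"odinger equation with zero initial data and source
\[
F^N := \pm\bigl(|u|^{2p_1}u - \mc{N}_{p_1,N}(u^N)\bigr) \pm \bigl(|u|^{2p_2}u - \mc{N}_{p_2,N}(u^N)\bigr).
\]
For each $i\in\{1,2\}$ I split
\[
|u|^{2p_i}u - \mc{N}_{p_i,N}(u^N) \;=\; \underbrace{\bigl(\mc{N}_{p_i,N}(u)-\mc{N}_{p_i,N}(u^N)\bigr)}_{\text{Lipschitz in }u^N} \;+\; \underbrace{\bigl(|u|^{2p_i}u-\mc{N}_{p_i,N}(u)\bigr)}_{\text{consistency error}},
\]
isolating the dependence on the unknown $u^N$ from the fixed datum $u$.

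Next I apply the atomic-space estimate used in the proof of Proposition \ref{mixed_nonlinearity_LWP} on a short subinterval $I \subset [-T',T']$, controlling $\|w^N\|_{X^{s_c}(I)}$ by the dual-space norms of the two source terms. The multilinear bounds from that proposition dominate each Lipschitz part by
\[
\|w^N\|_{X^{s_c}(I)}\bigl(\|u\|_{X^{s_c}(I)}+\|u^N\|_{X^{s_c}(I)}\bigr)^{2p_i},
\]
uniformly in $N$ (since $\|V_{p_i,N}\|_{L^1}=1$), so shortening $|I|$ enough absorbs these into the left-hand side. The consistency terms involve only the fixed function $u$, and the argument used in the proof of Theorem \ref{Convergence_theorem} applies verbatim for each $i$: by density of smooth functions in $X^{s_c}$ (with the uniform multilinear bound) combined with $V_{p_i,N}\to \delta_{p_i}$ against continuous functions, the relevant dual norm of each consistency term tends to $0$ as $N\to\infty$. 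Covering $[-T',T']$ by finitely many subintervals of uniform length and iterating then yields \eqref{mixed_nonlinearity_convergence_result}.

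The principal obstacle I anticipate is ensuring that the subinterval length required for the absorption step can be chosen independently of $N$. This demands a uniform $X^{s_c}$-bound for $u^N$ on all of $[-T',T']$, which in turn follows by iterating Proposition \ref{mixed_nonlinearity_LWP}: its time of existence depends only on $\|u^N(t)\|_{H^{s_c}}$ and the uniform value $\|V_{p_i,N}\|_{L^1}=1$, and the hypothesis $T'<\min_N T_N$ rules out $H^{s_c}$-blowup on $[-T',T']$, so finitely many applications of the uniform local theory suffice. A secondary technical point is to confirm that the convolution-type consistency estimate genuinely gives a \emph{dual}-space bound (not merely pointwise smallness), but this is already the content of the approximation-by-mollification step in the proof of Theorem \ref{Convergence_theorem} and transfers with no essential change.
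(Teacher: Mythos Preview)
Your overall scheme---splitting each nonlinearity into a Lipschitz-in-$u^N$ piece and a consistency piece, then iterating over short subintervals---is exactly the paper's strategy, and the grouping you write down is equivalent to the paper's three-term split $I+II+III$ in \eqref{long_limit}. The treatment of the subcritical $p_1$ nonlinearity and of both consistency errors is fine.

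There is, however, a real gap in your absorption step for the \emph{critical} ($p_2$) Lipschitz term. You claim that the multilinear bound
\[
\|w^N\|_{X^{s_c}(I)}\bigl(\|u\|_{X^{s_c}(I)}+\|u^N\|_{X^{s_c}(I)}\bigr)^{2p_2}
\]
can be absorbed by ``shortening $|I|$ enough''. This is false: Proposition~\ref{multilinear_lemma} carries no factor of $|I|$, and for a fixed $u\in X^{s_c}$ the norm $\|u\|_{X^{s_c}(I)}$ does \emph{not} tend to zero as $|I|\to 0$ (the free evolution already has $\|\e^{\ii t\Delta}u_0\|_{X^{s_c}(I)}=\|u_0\|_{H^{s_c}}$ on every interval). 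So with large data the factor $(\|u\|_{X^{s_c}(I)}+\|u^N\|_{X^{s_c}(I)})^{2p_2}$ stays of order $A^{2p_2}$ no matter how short $I$ is, and nothing is absorbed. The uniform-in-$N$ bounds you discuss in your last paragraph do not help here---the problem is not uniformity, it is smallness.

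The paper (following the proof of Theorem~\ref{Convergence_theorem}) repairs this by inserting the Littlewood--Paley splitting $1=P_{\leq M}+P_{>M}$ inside the $p_2$-nonlinearity. Terms carrying at least one $P_{>M}$ are controlled via the smallness $\|P_{>M}u^N\|_{X^{s_c}}\leq 2\delta$ coming from the large-data contraction set \eqref{contraction_set_big_data}; the single all-low-frequency term is then handled by $L^\infty_{t,x}$ bounds and Bernstein, producing a factor of $T M^{2}A^{2p_2}$ which \emph{is} small for $T$ short. Only the subcritical $p_1$ piece gets the direct $|I|^c$ gain from Proposition~\ref{mixed_multlinear_proposition}. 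You should replace your time-smallness absorption for the $p_2$ Lipschitz term by this frequency decomposition; after that, your iteration and consistency arguments go through as written.
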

\begin{remark}
Let us remark that the maximum time of existence of $u^N$ does not shrink to zero as $N \to \infty$
by the assumption that $\|V_{p_2,N}\|_{L^1} =1$ for all $N$.
\end{remark}

\subsection{Previously known results}
\label{previously_known_results_section}
The well-posedness of \eqref{cubic_Hartree} in $H^1(\mathbb{R}^3)$ was considered in \cite{GV80}. The well-posedness of \eqref{higher_order_NLS} was considered in subcritical spaces in \cite{Kat87}, and in critical Sobolev spaces by \cite{CW90}, before global well-posedness was extended to large initial data with critical regularity in \cite{CKSTT08}. See also \cite{TVZ07} for the well-posedness of the NLS with mixed power-type nonlinearities in $\R^d$.

We now mention the known results in the periodic setting. The global well-posedness of the local quintic NLS for the energy-critical space for small initial data was established in \cite{HTT11}, and for all initial data in \cite{IP11}. The local and global well-posedness for small initial data (depending on the exponent and dimension of the space) was investigated in all dimensions and for various values of $p \in \N$ for the local NLS in \cite{Wang13}. We direct the reader to \cite[Theorem 1.3]{Wang13} for a precise statement of the result. The well-posedness of the local NLS for certain choices of $p \in \N$ (depending on the dimension) was studied on irrational tori in \cite{GOW14}, see \cite[Theorems 1.4 and 1.5]{GOW14} for a precise statement. The well-posedness of the local NLS was also studied for general $p \in \R$, $p \geq 1$ on $\T^3$ in \cite{Lee19}.

The convergence of the one dimensional cubic Hartree equation to the nonlocal NLS was proved in \cite{FKSS18} for initial data in $H^s(\T)$ for $s > \frac{2}{3}$. The convergence of the quintic Hartree equation in one spatial dimension was studied in \cite{RS23}. Convergence to the local quintic NLS was proved for the local time of existence, and for Gibbsian initial data, this result can be extended almost surely to arbitrary times of existence. The well-posedness and convergence of the quintic Hartree equation, corresponding to $p=2$, is considered in \cite[Appendix A]{NS20}. More precisely, the authors prove that, for a particular choice of sequence of $V_N$'s converging to the $\delta$ distribution, one has global well-posedness of \eqref{higher_order_Hartree} for $p=2$ if the potential $V_N$ is close enough to the $\delta$ function. This is done by considering the Hartree equation as a small perturbation of the quintic NLS. The proof generalises to the case of any sequence $V_N$ converging to the $\delta$ function. The ill-posedness of the cubic Hartree equation was studied in \cite{BS23}.

Finally, the global well-posedness of the local NLS with cubic-quintic nonlinearity was recently studied on $\T^3$ in \cite{LYYZ23}. In this paper, they proved global well-posedness for a positive quintic nonlinearity by viewing the equation as a perturbation of the quintic NLS, adapting an argument in $\R^3$ from \cite{Zhang06}.

\subsection{Outline of the paper}
To make the fixed-point argument to prove Proposition \ref{LWP_theorem}, we need to prove a multilinear estimate, which is done in Section \ref{multlinear_section}.
We prove Proposition \ref{GWP_theorem} using conservation of energy --- see Section \ref{well-posedness_subsection}.
Theorem \ref{Convergence_theorem} is proved in Section \ref{convergence_section} by applying the multilinear estimates. In Section \ref{mixed_nl_section}, we consider the case of a mixed Hartree nonlinearity, and extend the results of Proposition \ref{GWP_theorem} and Theorem \ref{Convergence_theorem} to this setting.
We also show that, for perturbations of the defocusing quintic equation, the mixed (quintic-cubic) Hartree equation is globally well-posed without smallness conditions;
see Theorem \ref{GWP_mixed_nonlinearity_close_quintic} for a precise statement.
The proof relies on multilinear estimates for the subcritical nonlinearities ---
see Propositions \ref{mixed_multlinear_proposition} and \ref{multilinearZ_estimates}.
Finally, in Appendix \ref{appendix_hamiltonian}, we consider some specific examples of interaction potentials $V_N$.
We explain how to apply the results of Theorem \ref{Convergence_theorem} in these particular examples.

\section{Well-posedness of the Hartree equation}
\label{multlinear_section}
In this section, we prove the well-posedness of \eqref{higher_order_Hartree}. To do this, we state and prove energy bounds and multilinear estimates which will form the base of our arguments.
We first introduce some notation and functions spaces.
\subsection*{Notation and preliminaries}
We write $A \lesssim B$ to denote $A \leq C B$ for some constant $C > 0$, and $A \sim B$ for $A \lesssim B \lesssim A$. Where it is clear, we will omit the domain of integration from our integrals. We define space-time $L^pL^q$ norms by
\begin{equation*}
\|u\|_{L^p([0,T])L^q(\mathbf{X})} := \left(\int^T_0 \left(\int_{\mathbf{X}} |u(x,t)|^q \, \dd x\right)^{\frac{p}{q}}\dd t\right)^{\frac{1}{p}},
\end{equation*}
where the definition is altered accordingly when $p$ or $q = \infty$. We often omit the domains from the norm, writing $L^p_tL^q_x$, and we write $L_t^pL^p_x \equiv L^p_{t,x}$.

For $k \in \Z^3$, the Fourier coefficients of $f$ are
\begin{equation*}
\hat{f}(k) := \frac{1}{(2\pi)^{\frac{3}{2}}} \int_{\mathbf{X}} f(x)\e^{-\ii k\cdot x}\dd x.
\end{equation*}

To define a partition of unity, we fix a non-negative function $\phi \in C^\infty_0(\R)$
with $\phi(x) = 1$ for $|x| < 1$ and $\phi(x) = 0$ for $|x| > 2$. Let $M \in 2^{\N}$ be a dyadic number.
Set $\phi_1(k) := \phi(|k|)$. For $M \geq 4$, define
\begin{equation*}
\phi_M(k) := \phi\left(\frac{|k|}{M}\right) - \phi\left(\frac{2|k|}{M}\right).
\end{equation*}
For $f \in L^2$, we define the Littlewood--Paley operator by
\begin{equation*}
\widehat{P_M f}(k) := \phi_M(k) \hat{f}(k).
\end{equation*}
We also write $P_{\leq M} := \sum_{1 \leq M' \leq M} P_{M'}$. More generally, for a set $\mathcal{C} \subset \Z^3$, we will write
\begin{equation*}
\widehat{P_{\mathcal{C}} f}(k) := \chi_{\mc{C}}\hat{f}(k),
\end{equation*}
where $\chi_\mathcal{C}$ denotes the indicator function on $\mc{C}$.

For $s \in \R$, we define the Sobolev space $H^s$ as the set of functions for which
\begin{equation*}
\|f\|^2_{H^s} := \sum_{k \in \Z^3} \langle k \rangle^{2s} |\hat{f}(k)|^2 \sim \sum_{M \geq 1} M^{2s} \|P_M f\|^2_{L^2} < \infty.
\end{equation*}
Here, when we write $M \geq 1$, we mean $M \in 2^{\N}$.
\subsubsection*{Atomic spaces}
We now introduce the atomic spaces, first used in the context of dispersive PDEs in \cite{HHK09,HTT11}. Let $\mc{H}$ be a separable complex Hilbert space. Fix a time interval $I := [0,T)$. Let $\mc{Z}$ denote the set of finite partitions $0=t_0<t_1 \ldots <t_n \leq T$.
\begin{definition}
Fix $p \in [1,\infty)$. A $U^p$-atom is a function $a\colon [0,T) \to \mc{H}$ such that
\begin{equation*}
a = \sum_{j=0}^{n-1} \chi_{[t_j,t_{j+1}]} \psi_j,
\end{equation*}
where $\{t_j\} \in \mc{Z}$ and $\{\psi_k\} \subset \mc{H}$ with $\sum_{j=0}^{n-1} \|\psi_j\|_{\mc{H}}^p \leq 1$. 
\end{definition}
\begin{definition}
The space $U^p(\R;\mc{H})$ is defined as the set of functions $u \colon \R \to \mc{H}$ such that
\begin{equation*}
u = \sum_{j=1}^\infty \lambda_j a_j,
\end{equation*}
where $a_j$ are $U^p$-atoms and $\sum_j |\lambda_j| < \infty$.
We define the norm $\|\cdot\|_{U^p}$ by
\begin{equation*}
\|u\|_{U^p} := \inf \left\{ \sum_{j=1}^\infty |\lambda_j| : u = \sum_{j=1}^\infty \lambda_j a_j, \lambda_j \in \C, \text{$a_j$ are $U^p$-atoms} \right\}.
\end{equation*}
\end{definition}
\begin{definition}
We define the space $V^p(\R;\mc{H})$ to be the space of functions $v\colon\R \to \mc{H}$ such that
\begin{equation*}
\|v\|_{V^p} := \sup \left( \sum_{j=1}^n \|v(t_j) - v(t_{j-1})\|_{\mc{H}}^p \right)^{\frac{1}{p}}
\end{equation*}
is finite, where the supremum is taken over the set of finite partitions of $I$.
We denote by $V_{\text{rc}}^p$ the closed subspace of $V^p$ containing the right continuous functions $v$ such that $\lim_{t \to T} v(t) = 0$.
\end{definition}
We now set the Hilbert space $\mc{H}$ to be $H^s_x$.
\begin{definition}
\label{X^s_definition}
Fix $s \in \R$. We denote by $X^s([0,T))$ and $Y^s([0,T))$ the Banach spaces of all functions $u \colon [0,T) \to H^s(\T^3)$ such that for each $k \in \Z^3$, the map $t \mapsto \widehat{\e^{-\ii t \Delta}u(t)}(k)$ is in $U^2([0,T))$ or $V^2_{\text{rc}}([0,T))$ respectively. We define the norms
\begin{align*}
\|u\|^2_{X^s([0,T))} &:= \sum_{k \in \Z^3} \langle k\rangle^{2s}
    \left\| \widehat{\e^{-\ii t \Delta}u(t)(k)} \right\|_{U^2}^2, \\
\|u\|^2_{Y^s([0,T))} &:= \sum_{k \in \Z^3} \langle k\rangle^{2s}
    \left\| \widehat{\e^{-\ii t \Delta}u(t)(k)} \right\|_{V^2}^2.
\end{align*}
\end{definition}
From \cite{HHK09}, we have the following embeddings.
\begin{equation}
\label{embeddings}
X^s(I) \hookrightarrow Y^s(I) \hookrightarrow L^\infty_t H^s_x.
\end{equation}
We also adopt the notation that for $s \in \R$,
\begin{equation*}
\|F\|_{N^{s}(I)} := \left\|\int_I \e^{\ii \Delta (t-t')} F(t') \, \dd t' \right\|_{X^s(I)}.
\end{equation*}
We have the following duality result about $X^s$ and $Y^s$ norms, see \cite[Proposition 2.11]{HTT11}.
\begin{proposition}
\label{duality_proposition} Let $s \geq 0$ and $I$ be a bounded interval.
For $F \in L^1(I;H^s(\mathbf{X}))$, we have
\begin{equation*}
\|F\|_{N^s(I)} \leq \sup \left| \int_I \int_{\mathbf{X}} F(x,t) \overline{v(x,t)}\, \dd x \, \dd t\right|,
\end{equation*}
where the supremum is taken over $v \in Y^{-s}(I)$ with $\|v\|_{Y^{-s}(I)} \leq 1$.
\end{proposition}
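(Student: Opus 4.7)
The plan is to reduce the claim to the fundamental $(U^2)^*$--$V^2$ duality from \cite{HHK09}, then dress it with the $\langle k \rangle$-weights and Plancherel on $\T^3$. I would first rewrite $\|F\|_{N^s(I)}$ as $\|u\|_{X^s(I)}$ for $u(t):=\int_0^t \e^{\ii(t-t')\Delta}F(t')\,\dd t'$. Conjugating by the free propagator, the spatial Fourier profile $h_k(t):=\widehat{\e^{-\ii t\Delta}u(t)}(k)$ is computed to satisfy $h_k(t)=\int_0^t g_k(t')\,\dd t'$ with $g_k(t):=\widehat{\e^{-\ii t\Delta}F(t)}(k)$. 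Since $F\in L^1(I;H^s)$, each $g_k$ lies in $L^1(I)$, so $h_k$ is absolutely continuous on $[0,T)$ and vanishes at $t=0$.

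For each fixed $k$ I would apply the HHK duality specialised to absolutely continuous profiles, which reads
\begin{equation*}
\|h_k\|_{U^2([0,T))} \;=\; \sup_{\|\phi_k\|_{V^2}\leq 1}\left|\int_0^T g_k(t)\overline{\phi_k(t)}\,\dd t\right|.
\end{equation*}
Taking the weighted $\ell^2$-sum in $k$ and using the self-duality of $\ell^2(\Z^3)$, this lifts to
\begin{equation*}
\|u\|_{X^s(I)} \;=\; \sup_{\{v_k\}}\;\sum_{k\in\Z^3}\int_I g_k(t)\overline{v_k(t)}\,\dd t,
\end{equation*}
where the supremum is taken over sequences satisfying $\sum_k \langle k\rangle^{-2s}\|v_k\|_{V^2}^2 \leq 1$; an approximate maximiser is $v_k:=\langle k\rangle^{2s}\|h_k\|_{U^2}\phi_k/\|u\|_{X^s}$ for $\phi_k$ approximate extremisers of the preceding line.

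The last step is to repackage the sequence $\{v_k\}$ as a single function $v$ on $\T^3\times I$. I would define $v$ via $\widehat{\e^{-\ii t\Delta}v(t)}(k):=v_k(t)$; the constraint on $\{v_k\}$ then translates exactly into $\|v\|_{Y^{-s}(I)}\leq 1$. Parseval on $\T^3$ together with the unitarity of $\e^{-\ii t\Delta}$ on $L^2$ then yields
\begin{equation*}
\sum_k\int_I g_k(t)\,\overline{v_k(t)}\,\dd t \;=\; \int_I\int_{\T^3}\e^{-\ii t\Delta}F(x,t)\,\overline{\e^{-\ii t\Delta}v(x,t)}\,\dd x\,\dd t \;=\; \int_I\int_{\T^3} F(x,t)\,\overline{v(x,t)}\,\dd x\,\dd t,
\end{equation*}
and the claimed bound follows. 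Absolute integrability of the pairing is guaranteed by the embedding $Y^{-s}\hookrightarrow L^\infty_tH^{-s}$ together with $F\in L^1_tH^s$.

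The main obstacle I foresee is the very first identification: the HHK duality is stated most naturally on $U^2$-atoms via a discrete bilinear form of the type $B(u,\phi)=\sum_j \langle\psi_{j+1}-\psi_j,\phi(t_{j+1})\rangle_{\mathcal{H}}$, and passing to the absolutely continuous $h_k$ to recognise $B$ as the genuine $L^2$-pairing $\int g_k\overline{\phi_k}\,\dd t$ requires an approximation argument and a Riemann-sum limit in $V^2$. Once that identification is in hand, everything else is formal manipulation of weighted $\ell^2$ sums and Plancherel.
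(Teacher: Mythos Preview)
The paper does not prove this proposition; it is quoted directly from \cite[Proposition~2.11]{HTT11}. Your sketch is essentially the argument underlying that reference: conjugate by the free propagator, expand in Fourier modes, invoke the scalar $U^2$--$V^2$ duality of \cite{HHK09} mode by mode, and then reassemble via weighted $\ell^2$ duality and Plancherel. The one technical point you flag --- identifying the abstract HHK bilinear form $B$ with the concrete integral $\int_I g_k\,\overline{\phi_k}\,\dd t$ for absolutely continuous $h_k$ --- is precisely what is established in \cite[Theorem~2.8 and Proposition~2.10]{HHK09}; once that is quoted, your remaining manipulations are correct.
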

From \cite[Proposition 2.11]{HTT11}, we have
\begin{equation}
\label{Ns_embedding}
\|F\|_{N^s(I)} \lesssim \|F\|_{L^1_t H^s_x(I \times \mathbf{X})}.
\end{equation}
We will use the following Strichartz estimates, see \cite{KV16}.
\begin{proposition}
\label{Strichartz_estimate_proposition}
    Suppose $q > \frac{10}{3}$ and $I$ is a bounded interval. Then
\begin{equation}
\label{Strichartz_estimate}
        \|\e^{\ii t \Delta} P_{M}u\|_{L^q_{t,x}(I\times\mathbf{X})} \lesssim M^{\frac{3}{2}-\frac{5}{q}}
        \|P_{M}u\|_{Y^0(I)}.
\end{equation}
\end{proposition}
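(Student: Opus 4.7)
The plan is to deduce the claimed inequality from the sharp linear Strichartz estimate on $\T^3$, propagated through the atomic/variation-space framework of Hadac--Herr--Koch that underlies the definition of $X^s$ and $Y^s$. Concretely, I would use three standard ingredients in succession: the Bourgain--Demeter $\ell^2$-decoupling Strichartz estimate, a transference principle from linear Schr\"odinger solutions to Schr\"odinger-adapted $U^q$-functions, and the embedding $V^2_{\mathrm{rc}}\hookrightarrow U^q$ valid for $q>2$.

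The starting point is the $\ell^2$-decoupling Strichartz inequality of Bourgain--Demeter,
\begin{equation*}
\|\e^{\ii t\Delta} f\|_{L^q_{t,x}(I\times\T^3)} \lesssim_{|I|} M^{\frac{3}{2}-\frac{5}{q}} \|f\|_{L^2(\T^3)}, \qquad q>\tfrac{10}{3},
\end{equation*}
whenever $\hat f$ is supported in $\{|k|\sim M\}$. The exponent matches the Euclidean scaling, and the range $q>10/3$ is precisely the strength of the decoupling theorem of \cite{BD15}. To transfer this to the atomic spaces, consider a $U^q$-atom adapted to $\e^{\ii t\Delta}$, namely $a(t)=\sum_j \chi_{[t_j,t_{j+1}]}(t)\,\e^{\ii t\Delta}\psi_j$ with $\sum_j \|\psi_j\|_{L^2}^q \leq 1$. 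Disjointness of the intervals and the linear estimate give
\begin{equation*}
\|P_M a\|_{L^q_{t,x}}^q = \sum_j \|\e^{\ii t\Delta} P_M\psi_j\|_{L^q([t_j,t_{j+1}])L^q_x}^q \lesssim M^{q\left(\frac{3}{2}-\frac{5}{q}\right)} \sum_j \|P_M \psi_j\|_{L^2}^q \leq M^{q\left(\frac{3}{2}-\frac{5}{q}\right)}.
\end{equation*}
The $L^q_{t,x}$ triangle inequality then extends this atomic bound to every $u$ with $\e^{-\ii t\Delta} u \in U^q$, giving $\|P_M u\|_{L^q_{t,x}} \lesssim M^{\frac{3}{2}-\frac{5}{q}}\|\e^{-\ii t\Delta} u\|_{U^q}$.

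Finally, since $q>10/3>2$, the embedding $V^2_{\mathrm{rc}}\hookrightarrow U^q$ of \cite{HHK09} applies, and combining with the comparison $\|\e^{-\ii t\Delta} P_M u\|_{V^2(I;L^2_x)} \leq \|P_M u\|_{Y^0(I)}$ --- a direct consequence of Plancherel and Definition \ref{X^s_definition} --- yields the claimed estimate. The transference and embedding steps are essentially formal within the $U^p/V^p$ calculus; the real analytic content, and hence the main obstacle, lies in the initial linear Strichartz bound at the sharp range $q>10/3$, which genuinely requires the $\ell^2$-decoupling machinery of \cite{BD15}. A completely elementary argument is possible using only the Bourgain Strichartz estimate of \cite{Bou93}, but at the cost of a smaller admissible range of $q$.
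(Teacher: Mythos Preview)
The paper does not give a proof of this proposition at all; it is simply stated with the attribution ``see \cite{KV16}''. Your outline is the standard and correct derivation: the scale-invariant linear Strichartz inequality on $\T^3$ (which is exactly the content of \cite{KV16}, itself a consequence of \cite{BD15}), transferred to $U^q_\Delta$ via atoms, followed by the embedding $V^2_{\mathrm{rc}}\hookrightarrow U^q$ for $q>2$ and the coordinate-wise inequality $\|\e^{-\ii t\Delta}P_M u\|_{V^2(I;L^2)}\leq \|P_M u\|_{Y^0(I)}$. This is precisely how the estimate is obtained in the literature the paper cites, so your approach and the paper's (implicit) approach coincide.

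One small remark: the left-hand side of \eqref{Strichartz_estimate} as printed carries a stray $\e^{\ii t\Delta}$; in every application within the paper (e.g.\ the proof of Proposition~\ref{multilinear_lemma}) the estimate is used in the form $\|P_M u\|_{L^q_{t,x}(I\times\mathbf{X})}\lesssim M^{3/2-5/q}\|P_M u\|_{Y^0(I)}$ for a space--time function $u$, which is exactly what your argument yields.
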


Finally, we introduce an interpolation norm, which will be needed in the proof of Theorem \ref{GWP_mixed_nonlinearity_close_quintic}; see \cite{IP11}.
First, we need the norm
\[
    \|u\|_{Z(I)} := \sum_{p=4.1,100}\sup_{\substack{J\subset I\\|J|\leq 1}}\left(
        \sum_{M\geq 1} M^{5-\frac{p}{2}}\|P_Mu\|_{L_{t,x}^p(J)}^p\right)^{\frac{1}{p}}.
\] Note that
\[
    \|u\|_{Z(I)} \lesssim \|u\|_{X^1(I)}.
\] The interpolation norm $\|\cdot\|_{Z'(I)}$ is defined as follows: \[
    \|u\|_{Z'(I)} := \|u\|_{Z(I)}^{\frac{1}{2}}\|u\|_{X^1(X)}^\frac{1}{2}.
\]

\subsection{Energy estimates}
We collect some bounds on the energy \eqref{energy_one_nonlinearity}.
\begin{lemma}
\label{energy_finite_lemma}
Suppose $p=2$.
Moreover, let $\omega \colon \mathbf{X} \to \R$ be an even $L^1$ function
and $u \in H^1(\mathbf{X})$.
Take $V$ as in \eqref{three_body_interaction}. Then
\begin{equation}
\label{quintic_energy}
|E(u)| \lesssim \|u\|_{H^1}^2 + \|\omega\|^2_{L^1}\|u\|_{H^{1}}^{6} < \infty.
\end{equation}
\end{lemma}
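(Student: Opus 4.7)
The plan is to split $E(u)$ into its kinetic and potential parts and bound each separately, using the simplified form \eqref{quintc_energy} of the potential energy that follows from the symmetric choice \eqref{three_body_interaction} of $V$.

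For the kinetic piece, $\int_{\mathbf{X}}|\nabla u|^2\,\dd x \leq \|u\|_{H^1}^2$ is immediate. For the potential piece $\frac{1}{3}\int_{\mathbf{X}}(\omega*|u|^2)^2|u|^2\,\dd x$, I would first apply Hölder's inequality on the three factors $(\omega*|u|^2)$, $(\omega*|u|^2)$, $|u|^2$, each placed in $L^3(\mathbf{X})$ (note $1/3+1/3+1/3=1$), to get
\[
    \int_{\mathbf{X}}(\omega*|u|^2)^2|u|^2\,\dd x
    \leq \|\omega*|u|^2\|_{L^3(\mathbf{X})}^{2}\,\|u\|_{L^6(\mathbf{X})}^{2}.
\]
Next I would apply Young's convolution inequality in the form $L^1 * L^3 \hookrightarrow L^3$ to get
\[
    \|\omega*|u|^2\|_{L^3(\mathbf{X})} \leq \|\omega\|_{L^1(\mathbf{X})}\,\||u|^2\|_{L^3(\mathbf{X})}
    = \|\omega\|_{L^1(\mathbf{X})}\,\|u\|_{L^6(\mathbf{X})}^{2},
\]
which combined with the previous display yields
\[
    \int_{\mathbf{X}}(\omega*|u|^2)^2|u|^2\,\dd x
    \lesssim \|\omega\|_{L^1(\mathbf{X})}^{2}\,\|u\|_{L^6(\mathbf{X})}^{6}.
\]
Finally I would invoke the Sobolev embedding $H^1(\T^3)\hookrightarrow L^6(\T^3)$, which is sharp in three dimensions, to replace $\|u\|_{L^6}$ by $\|u\|_{H^1}$, producing the desired bound $\lesssim \|\omega\|_{L^1}^2\|u\|_{H^1}^6$.

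Before carrying out the above, it is worth briefly verifying that the potential part of \eqref{energy_one_nonlinearity} indeed reduces to the single term $\frac{1}{3}\int(\omega*|u|^2)^{2}|u|^{2}\,\dd x$ appearing in \eqref{quintc_energy}: the three summands in \eqref{three_body_interaction} each yield $\int(\omega*|u|^2)^{2}|u|^{2}\,\dd x$ after relabelling the dummy variables $x,y,z$ and using evenness of $\omega$, so that the prefactor $\frac{1}{p+1}\cdot\frac{1}{3}\cdot 3$ collapses to $\frac{1}{3}$ for $p=2$.

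There is essentially no obstacle here; the only thing to check is that the Hölder and Young exponents line up precisely with the endpoint Sobolev embedding on $\T^3$. The point of the argument is that the quintic nonlinearity and the $H^1\hookrightarrow L^6$ embedding are exactly matched in the three-dimensional critical setting, which is why finiteness of the energy is delicate but goes through at the endpoint.
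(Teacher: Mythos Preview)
Your proof is correct and follows essentially the same approach as the paper: the paper's one-line argument ``apply Young's inequality and the Sobolev embedding theorem'' unpacks to exactly the H\"older (with exponents $3,3,3$), Young ($L^1*L^3\hookrightarrow L^3$), and $H^1(\T^3)\hookrightarrow L^6(\T^3)$ steps you wrote out. Your additional verification that \eqref{quintc_energy} follows from \eqref{energy_one_nonlinearity} with the choice \eqref{three_body_interaction} is a welcome clarification the paper leaves implicit.
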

\begin{proof}
Bounding the first term in \eqref{quintc_energy} is simple. For the second term, we apply Young's inequality and the Sobolev embedding theorem to get
\begin{equation*}
\left|\int (\omega*|u|^2)^2|u(x)|^2 \, \dd x\right| \leq \|\omega\|_{L^1}^{2} \|u\|_{L^{6}}^{6} \lesssim \|\omega\|_{L^1}^{2} \|u\|_{H^{1}}^{6} < \infty.
\end{equation*}
\end{proof}
\begin{remark}
\label{energy_finite_remark}
The same proof shows that if $u$ and $\omega$ are as in the statement of Lemma \ref{energy_finite_lemma}, then
\begin{equation*}
\left| \int |u(x)|^2\omega(x-y)|u(y)|^2 \, \dd x \, \dd y \right| \leq \|\omega\|_{L^1}\|u\|_{L^4}^4 \lesssim \|\omega\|_{L^1}\|u\|_{H^1}^4 < \infty.
\end{equation*}
\end{remark}

\subsection{Multilinear estimates}
We prove an estimate of the following form.
\begin{proposition}
\label{multilinear_lemma}
Fix $p \geq 2$, let $s_c$ be as in \eqref{critical_sobolev}, and suppose $I = (a,b)$ with $|I| < 1$. Let $s \geq s_c$ and suppose $u_j \in X^{s}(I)$ for $j = 1,\ldots,2p+1$. Then,
\begin{multline}
\label{multilinear_estimate}
\bigg\| \int_{\mathbf{X}^p} \bigg[ V(x-x_1,\ldots,x-x_p) \tilde{u}_{1}(x_1,t) \tilde{u}_{2}(x_1,t) \ldots \tilde{u}_{2p-1}(x_p,t) \tilde{u}_{2p}(x_p,t) \bigg] \\
\times \dd x_1 \ldots \dd x_p \, \tilde{u}_{2p+1}(x,t) \bigg\|_{N^{s_c}(I)} 
\lesssim \|V\|_{L^1} \prod^{2p+1}_{j=1} \|u_j\|_{X^{s_c}(I)}.
\end{multline}
Here $\tilde{u}_j$ denotes either $u_j$ or $\overline{u}_j$.
\end{proposition}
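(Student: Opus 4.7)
The plan is to use a Minkowski/translation-invariance argument to peel off the $L^1$ potential $V$ and reduce \eqref{multilinear_estimate} to the local $(2p+1)$-linear $N^{s_c}$ estimate for products of $X^{s_c}$ functions, which is the familiar building block in the well-posedness theory of the (sub)critical NLS on $\T^3$.

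First I would change variables $y_j = x - x_j$ to rewrite the Hartree nonlinearity as an integral of translates,
\[
\int_{\mathbf{X}^p} V(y_1,\dots,y_p)\, G_{\mathbf{y}}(x,t)\, \dd \mathbf{y}, \qquad G_{\mathbf{y}}(x,t) := \Big[\prod_{j=1}^p \tilde u_{2j-1}(x-y_j,t)\,\tilde u_{2j}(x-y_j,t)\Big]\,\tilde u_{2p+1}(x,t).
\]
Since $N^{s_c}(I)$ is a Banach space, Minkowski's inequality gives that the left-hand side of \eqref{multilinear_estimate} is bounded by $\int |V(\mathbf{y})|\,\|G_{\mathbf{y}}\|_{N^{s_c}(I)}\,\dd \mathbf{y}$. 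Because spatial translations commute with $\e^{\ii t\Delta}$ and with the Littlewood--Paley projectors, one has $\|u(\cdot-y,\cdot)\|_{X^{s_c}(I)} = \|u\|_{X^{s_c}(I)}$. Hence it suffices to establish the \emph{local} multilinear estimate
\[
\Big\|\prod_{j=1}^{2p+1} \tilde v_j\Big\|_{N^{s_c}(I)} \;\lesssim\; \prod_{j=1}^{2p+1} \|v_j\|_{X^{s_c}(I)},
\]
applied to each $G_{\mathbf{y}}$: the inner norm is then uniformly bounded by $\prod_j \|u_j\|_{X^{s_c}(I)}$, and the $\dd \mathbf{y}$-integration of $|V|$ produces the factor $\|V\|_{L^1}$.

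All the real work is therefore in the local estimate above. I would follow the standard strategy: by Proposition \ref{duality_proposition}, it suffices to bound $\big|\int_I\int_{\mathbf{X}} v_0\,\prod_j \tilde v_j\big|$ with $\|v_0\|_{Y^{-s_c}(I)} \le 1$; then perform a Littlewood--Paley decomposition $v_j = \sum_{M_j} P_{M_j} v_j$ and use (approximate) frequency conservation to reduce to the case $M_0 \sim M_1 \ge M_2 \ge \cdots \ge M_{2p+1}$. Each dyadic piece is controlled via H\"older in space--time combined with the Strichartz bound in Proposition \ref{Strichartz_estimate_proposition} with exponents $q_j$ slightly above $10/3$, which produces a weight $M_j^{3/2-5/q_j}$ on each $P_{M_j} v_j$; choosing the $q_j$'s so that the total frequency weight matches $M_0^{s_c} M_1^{s_c}$ leaves a power gain in $M_{\min}/M_{\max}$ at the low-frequency factors, enabling summation by Schur's test or Cauchy--Schwarz over the dyadic lattice.

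The main obstacle is precisely this frequency summation. For $p=2$ (so $s_c=1$), the local estimate is the energy-critical quintic estimate of \cite{HTT11,IP11}, which requires sharp bilinear Strichartz refinements to close the summation at the endpoint. For $p\ge 3$ the critical exponent $s_c = 3/2-1/p$ lies strictly between $1$ and $3/2$, leaving additional room in the Sobolev scale, and the estimate can be carried out by a direct H\"older--Strichartz computation in the spirit of \cite{Wang13}. In either case the assumption $V\in L^1$ contributes only the trivial $L^1$-prefactor and does not interact with the frequency analysis, so that the short interval $|I|<1$ in the hypothesis is used only to ensure that the Strichartz exponents $q_j > 10/3$ do not blow up when passing from space--time $L^q$ norms to the $Y^0$-based Strichartz estimate.
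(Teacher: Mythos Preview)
Your reduction is correct and is in fact the cleanest way to handle the potential: pull $V$ outside via Minkowski, use that spatial translations are isometries of $X^{s_c}$ (they commute with $\e^{\ii t\Delta}$ and with Fourier multipliers), and reduce to the local $(2p+1)$-linear $N^{s_c}$--$X^{s_c}$ estimate on $\T^3$, which is known from \cite{HTT11,Wang13,Lee19}. One small point worth making explicit: rather than invoking Minkowski for a Bochner integral into the atomic space $N^{s_c}$ (where measurability questions are slightly delicate), it is safer to pass through the duality of Proposition~\ref{duality_proposition} first, apply Fubini to the scalar pairing, and then use that the proofs of the local estimate in the references actually bound the pairing $\big|\int_{I\times\mathbf X} G_{\mathbf y}\,\overline v\big|\lesssim \|v\|_{Y^{-s_c}}\prod_j\|u_j\|_{X^{s_c}}$ uniformly in $\mathbf y$.

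The paper takes essentially the same route but in a different order and does not black-box the local estimate. It applies duality first, Littlewood--Paley decomposes all $2p+2$ factors (including the dual function), and only then pulls $|V|$ outside via the triangle inequality in $L^1_{t,x}$; the H\"older/Strichartz/Bernstein analysis and dyadic summation are then carried out explicitly (following \cite{CBU24}), including a box-localisation step when the dual frequency $M_0$ is comparable to the top frequency $M_{2p+1}$. Your outline of the local estimate omits this almost-orthogonality step, but since you are citing the result rather than reproving it this is not a gap. What the paper's approach buys is a self-contained, uniform-in-$p$ argument that stays entirely in the $X^{s_c}$/$Y^{s_c}$ framework and avoids the $Z'$-interpolation norm used in some of the references; what your approach buys is economy and a clear conceptual separation between the (trivial) role of $V\in L^1$ and the (hard) local multilinear analysis.
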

\begin{proof}
    To make our notation more compact and simplify the calculations,
    we will change variables in the $\mathbf{X}^p$ integral and set
    \begin{equation*}
            F_{V}(\mathbf{u})(x,t) := \int_{\mathbf{X}^p} V(\mathbf{x})
                \prod_{j=1}^{p}\tilde{u}_{2j-1}(x-x_{j},t')\tilde{u}_{2j}(x-x_{j},t') \, \dd \mathbf{x}^p
                \tilde{u}_{2p+1}(x,t)
    \end{equation*}
    for the remainder of the proof. Note that $x \in \mathbf{X}$, but $\mathbf{x} = (x_1,\ldots, x_p)\in\mathbf{X}^p$.
    We also assume, without loss of generality, that $V\geq 0$.
    
    Using Proposition \ref{duality_proposition} and the convergence of the Littlewood--Paley
    projections, it suffices to bound
    \begin{equation}\label{eqn:dualityclaim}
        \left|\int_{I\times\mathbf{X}} (P_{\leq M}F_{V}(\mathbf{u})(x,t))\overline{v(x,t)}\,\dd x \dd t\right|
            \lesssim \|P_{\leq M}v\|_{Y^{-s_c}(I)}\|V\|_{L^1}\prod_{j=1}^{2p+1}\|u_j\|_{X^{s_c}(I)},
    \end{equation} with an implicit constant that does not depend on $M$,
    and take $M\to\infty$.

    Now, let $u_0 := P_{\leq M}v$ and consider the decompositions \[
        u_j = \sum_{M_j} P_{M_j}u_j \quad \text{for all} \quad 0 \leq j \leq 2p+1,
    \] where the $M_j \in \mathcal{M}$ range over dyadic integers.
    Applying the triangle inequality in the $\dd x \,  \dd t$ integral and recalling that $V\geq 0$.
    we are left to bound
    \[
        S := \sum_{\mathcal{M}}\int_{\mathbf{X}^p}
            V(\mathbf{x})
            \left\|P_{M_0}\tilde{u}_0\prod_{j=1}^{2p+1}P_{M_{j}}\tilde{u}_j(\cdot-x_j)%
            \right\|_{L^1_{t,x}(I\times\mathbf{X})} \,\dd{\mathbf{x}^p}.
    \]
    Since $I$ remains fixed and all spatial integrals are over $\mathbf{X}$,
    we will omit these domains when writing our norms for the remaining calculations.
    
    We now consider two cases.

    {\it Case 1:} First, consider the situation in which $M_0$ is comparable to the
    highest frequency of the remaining $M_j$ (i.e. for $1\leq j \leq 2p+1$).
    Without loss of generality, we can sum over the set \[
        \mathcal{M}_1 :=\{ M_0 \sim M_{2p+1} \geq M_1 \geq \cdots \geq M_{2p}\} \cap \mathcal{M}.
    \] In order to deal with the higher frequencies $M_0$,
    we use box localisation:
    cover the scale $M_0$ with cubes $\mathcal{C}_m$ of scale $M_1$.
    Following \cite{CBU24}, we write $\mathcal{C}_m\sim\mathcal{C}_n$
    if and only if $\mathcal{C}_m+\mathcal{C}_n$
    overlaps the Fourier support of $P_{2M_1}$.

    Applying H\"older's inequality,
    using the translation invariance of the $L^q_{t,x}$ norms,
    and applying the Strichartz estimate \eqref{Strichartz_estimate}, we obtain
    \begin{align*}
        S_1 &:= \sum_{\mathcal{M}}\int_{\mathbf{X}^p}
            V(\mathbf{x})
            \left\|P_{M_0}\tilde{u}_0\prod_{j=1}^{2p+1}P_{M_{j}}\tilde{u}_j(\cdot-x_j)%
            \right\|_{L^1_{t,x}(I\times\mathbf{X})} \,\dd{\mathbf{x}^p}\\
        &\lesssim \|V\|_{L^1} \sum_{\mathcal{M}_1}\sum_{\mathcal{C}_m\sim\mathcal{C}_n}
            \|P_{\mathcal{C}_m}P_{M_0}u_0\|_{L^4_{t,x}}
            \|P_{\mathcal{C}_n}P_{M_{2p+1}}u_{2p+1}\|_{L^4_{t,x}}\\
            &\qquad\times\|P_{M_1}u_1\|_{L^4_{t,x}}\|P_{M_2}u_2\|_{L^4_{t,x}}
            \prod_{j=3}^{2p}\|P_{M_j}u_j\|_{L^\infty_{t,x}}\\
        &\lesssim\sum_{\mathcal{M}_1}\sum_{\mathcal{C}_m\sim\mathcal{C}_n}
            \left[M_1^{\frac{3}{4}} M_2^{\frac{1}{4}}\prod_{j=3}^{2p}M_{j}^{\frac{3}{2}}\right]
            \|P_{\mathcal{C}_m}P_{M_0}u_0\|_{Y^0}
            \|P_{\mathcal{C}_n}P_{M_{2p+1}}u_{2p+1}\|_{Y^0} \prod_{j=1}^{2p}\|P_{M_j}u_j\|_{Y^0}.
    \end{align*}
    Note that we have absorbed the norm $\|V\|_{L^1}$ into the implicit constant,
    since it does not depend on any other parameter.
    
    Next, for $1 \leq j \leq 2p$, we apply the Bernstein inequalities \[
        \|P_{M_j}u_j\|_{Y^0} \lesssim M_j^{-s_c}\|P_{M_j}u_j\|_{Y^{s_c}},
    \] followed by the embeddings $X^s \hookrightarrow Y^s$.
    Using (\ref{critical_sobolev}), we obtain
    \begin{equation*}
        S_1 \lesssim\sum_{\mathcal{M}_1}\sum_{\mathcal{C}_m\sim\mathcal{C}_n}
            \frac{\prod_{j=1}^{2p}M_j^\frac{1}{p}}{M_1^{\frac{3}{4}}M_2^{\frac{5}{4}}}
            \|P_{\mathcal{C}_m}P_{M_0}u_0\|_{Y^0}
            \|P_{\mathcal{C}_n}P_{M_{2p+1}}u_{2p+1}\|_{Y^0}
            \prod_{j=1}^{2p}\|P_{M_j}u_j\|_{X^{s_c}}.
    \end{equation*}

    The next step is to sum up the various projections $P_{M_j}u_j$ to obtain the norms
    $\|u_j\|_{X^s}$. To do this, we iteratively apply Cauchy--Schwarz,
    beginning with the sum over $M_{2p-1}\geq M_{2p}$:
    \begin{align*}
        \sum_{M_{2p-1}\geq M_{2p}}\frac{M_2^{\frac{1}{p}} M_{2p}^{\frac{1}{p}}}%
                {M_2^{\frac{5}{4}}} \|P_{M_{2p}}u_{2p}\|_{X^{s_c}} &\leq \frac{M_2^{\frac{2}{p}}}{M_2^{\frac{5}{4}}}
            \left(%
                \sum_{M_{2p-1}\geq M_{2p}}\frac{M_{2p}^{\frac{2}{p}}}{M_{2}^{\frac{2}{p}}}%
            \right)^{\frac{1}{2}}
            \left(%
                \sum_{M_{2p-1}\geq M_{2p}} \|P_{M_{2p}}u_{2p}\|_{X^{s_c}}^2%
            \right)^{\frac{1}{2}}\\
        &\lesssim \frac{M_2^{\frac{2}{p}}}{M_2^{\frac{5}{4}}}\|u_{2p}\|_{X^{s_c}},
    \end{align*}
    and so on until we arrive at
    \begin{multline}\label{M1M2balance_case1}
        \prod_{j=3}^{2p}\|u_j\|_{X^{s_c}}
        \sum_{M_0 \sim M_{2p+1}\geq M_1}\sum_{\mathcal{C}_m\sim \mathcal{C}_{n}}
            \frac{M_2^{\frac{3}{4}-\frac{2}{p}}}{M_1^{\frac{3}{4}-\frac{1}{p}}}
            \|P_{\mathcal{C}_m}P_{M_0}u_0\|_{Y^0}
            \|P_{\mathcal{C}_n}P_{M_{2p+1}}u_{2p+1}\|_{Y^0}\\
            \times\|P_{M_1}u_1\|_{X^{s_c}}\|P_{M_2}u_2\|_{X^{s_c}}.
    \end{multline}
    Two more applications of Cauchy--Schwarz and summing first over $M_{2p+1}\geq M_{1}$,
    then over $\mathcal{C}_m\sim\mathcal{C}_n$, leads to
    \begin{align}\label{eqn:S1}
        S_1 &\lesssim \prod_{j=1}^{2p}\|u_j\|_{X^{s_c}}\sum_{M_0\sim M_{2p+1}}\|P_{M_0}u_0\|_{Y^0}\|P_{M_{2p+1}}u_{2p+1}\|_{Y^0}\nonumber\\
            &\lesssim \prod_{j=1}^{2p}\|u_j\|_{X^{s_c}}\sum_{M_0\sim M_{2p+1}}M_0^{s_c}\|P_{M_0}u_0\|_{Y^0}M_{2p+1}^{-s_c}\|P_{M_{2p+1}}u_{2p+1}\|_{Y^0}\nonumber\\
            &\lesssim \prod_{j=1}^{2p}\|u_j\|_{X^{s_c}}
                \left(\sum_{M_0\sim M_{2p+1}}M_0^{2s_c}\|P_{M_0}u_0\|_{Y_0}^2\right)^\frac{1}{2} \left(\sum_{M_0\sim M_{2p+1}}M_{2p+1}^{-2s_c}\|P_{M_{2p+1}}u_{2p+1}\|_{Y_0}^2\right)^\frac{1}{2} \nonumber\\
        S_1 &\lesssim \|V\|_{L^1}\|u_0\|_{Y^{-s_c}}\prod_{j=1}^{2p+1}\|u_j\|_{X^{s_c}}.
    \end{align}
    In the final line, we restore the suppressed norm of $V$.

    {\it Case 2:} We next consider the situation in which the largest frequencies are $M_1\sim M_{2p+1}$,
    and so sum over the set \[
        \mathcal{M}_2 := \{M_0 \leq M_1 \sim M_{2p+1} \geq M_2 \geq \cdots \geq M_{2p}\}\cap \mathcal{M}.
    \] In this case, we don't need to use box localisation and we can apply H\"older's inequality
    directly to all the $P_{M_j}u_j$, followed by Strichartz estimates and Bernstein's inequalities
    as before:
    \begin{align*}
        S_2 &:= \sum_{\mathcal{M}}\int_{\mathbf{X}^p}
            V(\mathbf{x})
            \left\|P_{M_0}\tilde{u}_0\prod_{j=1}^{2p+1}P_{M_{j}}\tilde{u}_j(\cdot-x_j)%
            \right\|_{L^1_{t,x}(I\times\mathbf{X})} \,\dd{\mathbf{x}^p}\\
        &\lesssim \sum_{\mathcal{M}_2}
            \|P_{M_0}u_0\|_{L^4_{t,x}}
            \|P_{M_{2p+1}}u_{2p+1}\|_{L^4_{t,x}}
            \|P_{M_1}u_1\|_{L^4_{t,x}}\|P_{M_2}u_2\|_{L^4_{t,x}} \prod_{j=3}^{2p}\|P_{M_j}u_j\|_{L^\infty_{t,x}} \\
        &\lesssim\sum_{\mathcal{M}_2}\left[(M_0M_{2p+1}M_1M_2)^{\frac{1}{4}}\prod_{j=3}^{2p}M_j^\frac{3}{2}\right]
            \prod_{j=0}^{2p+1}\|P_{M_j}u_j\|_{Y^0}\\
        &\lesssim\sum_{\mathcal{M}_2}\frac{M_0^{\frac{7}{4}}\prod_{j=1}^{2p+1}M_j^{\frac{1}{p}}}%
            {M_0^{\frac{1}{p}}M_{2p+1}^{\frac{5}{4}}M_1^{\frac{5}{4}}M_2^{\frac{5}{4}}}
            \|P_{M_0}u_0\|_{Y^{-s_c}}\prod_{j=1}^{2p+1}\|P_{M_j}u_j\|_{X^{s_c}}.
    \end{align*}
    Once again, we have absorbed $\|V\|_{L^1}$ into the implicit constant.

    As in Case 1, we successively apply Cauchy--Schwarz to sum the various
    projections $P_{M_j}u_{j}$ and recover the norms of the $u_j$.
    First, we sum over $M_0 \leq M_{2p+1}$:
    \begin{equation*}
        \sum_{M_0 \leq M_{2p+1}}
            \frac{M_0^{\frac{7}{4}-\frac{1}{p}}}{M_{2p+1}^{\frac{5}{4}-\frac{1}{p}}}
            \|P_{M_0}u_0\|_{Y^{-s_c}}
        \leq M_{2p+1}^{\frac{1}{2}}\|u_0\|_{Y^{-s_c}}.
    \end{equation*}
    Next, we iteratively apply Cauchy--Schwarz and sum over $M_{j-1} \geq M_j$
    in the range $3 \leq j \leq 2p$ to obtain
    \begin{multline}\label{M1M2balance_case2}
        S_2 \lesssim \|u_0\|_{Y^{-s_c}}\prod_{j=3}^{2p}\|u_j\|_{X^{s_c}}\\
            \times\sum_{M_{2p+1}\sim M_1\geq M_2}
            \frac{M_{2p+1}^{\frac{1}{2}}}{M_1^{\frac{1}{2}}}\frac{M_2^{\frac{3}{4}-\frac{1}{p}}}{M_1^{\frac{3}{4}-\frac{1}{p}}}
            \|P_{M_{2p+1}}u_{2p+1}\|_{X^{s_c}}
            \|P_{M_{1}}u_{1}\|_{X^{s_c}}
            \|P_{M_{2}}u_{2}\|_{X^{s_c}}.
    \end{multline}
    Two more applications of Cauchy--Schwarz, summing first over
    $M_1\geq M_2$ and then over $M_1\sim M_{2p+1}$, leads us to
    \begin{equation}\label{eqn:S2}
        S_2 \lesssim \|V\|_{L^1}\|u_0\|_{Y^{-s_c}}\prod_{j=1}^{2p}\|u_j\|_{X^{s_c}},
    \end{equation}
    where we have restored the suppressed norm of $V$.
    
    Combining (\ref{eqn:S1}) and (\ref{eqn:S2}),
    and recalling that $u_0 = P_{\leq M}v$,
    we have shown (\ref{eqn:dualityclaim}),
    as required.
\end{proof}
\begin{remark}
    One can also prove the multilinear estimates (\ref{multilinear_estimate})
    by using trilinear estimates; see \cite{IP11,IP12b,LYYZ23}.
    This approach involves the interpolation norm $\|\cdot\|_{Z'}$
    and assumes $p\geq 2$.
    We follow the approach in \cite{CBU24}, which allows us to work directly
    with the $X^{s_c}$ norms.
    However, trilinear estimates are required for proof of Proposition \ref{multilinearZ_estimates} below.
\end{remark}

\subsection{Well-posedness arguments for \eqref{higher_order_Hartree}}
\label{well-posedness_subsection}
We are now able to prove Propositions \ref{LWP_theorem} and \ref{GWP_theorem}.

\begin{proof}[Proof of Proposition \ref{LWP_theorem}]
For completeness, we give the proof of local well-posedness (for small initial data). We consider the map
\begin{equation}
\label{contraction_map}
L(u) := \e^{\ii t\Delta} u_0 - \ii \int^{t}_0 \e^{\ii(t-t')\Delta} \mathcal{N}_V(u)(t') \dd t',
\end{equation}
where $$\mc{N}_V(u) :=  \left(\int_{\mathbf{X}^p} V(x-x_1,\ldots,x - x_p) |u(x_1)|^2 \ldots |u(x_p)|^2 \, \dd x_1 \ldots \dd x_p \right)u(x).$$Suppose that $\|u_0\|_{H^{s_c}(\T^3)} < \eps$ for $\eps > 0$ sufficiently small to be determined later. We show that $L$ is a contraction on the ball
\begin{equation*}
B_1 := \{u \in X^{s_c}([0,T)) \cap C_t([0,T);H^{s_c}(\T^3)) : \|u\|_{X^{s_c}([0,T))} \leq 2\eps\}
\end{equation*}
with respect to the $X^{s_c}$ norm. Proposition \ref{multilinear_lemma} implies that
\begin{equation}
\label{contraction_1}
\|L u\|_{X^{s_c}} \leq \|u_0\|_{H^{s_c}} + C\|V\|_{L^1}\|u\|_{X^{s_c}}^{2p+1}.
\end{equation}
For $\eps$ sufficiently small, it follows from \eqref{contraction_1} that $L$ maps $B_1$ to itself.
It remains to show that $L$ is a contraction on $B_1$. We have
\begin{equation*}
\|L(u-v)\|_{X^{s_c}} = \left\|\int_0^t \e^{\ii(t-t')\Delta} \left(\mc{N}_V(u) -\mc{N}_V(v) \right)\right\|_{X^{s_c}}
\end{equation*}
Using Proposition \ref{multilinear_lemma} again, we have
\begin{equation}
\label{contraction_2}
\|L(u-v)\|_{X^{s_c}} \leq C\|V\|_{L^1}\left(\|u\|_{X^{s_c}} + \|v\|_{X^{s_c}}\right)^{2p}\|u-v\|_{X^{s_c}}.
\end{equation}
Picking $\eps > 0$ sufficiently small, \eqref{contraction_2} implies that $L$ is a contraction on $B_1$. By the contraction mapping theorem, we obtain a local solution $u$.

For large data, we can argue similarly to the large data case from \cite[Theorem 1.1]{HTT11}. For future reference, we recall the set on which the contraction is performed:
setting $A := \|u_0\|_{H^{s_c}}$, it is
\begin{equation}
\label{contraction_set_big_data}
B := \{u \in X^{s_c}([0,T]) \cap C^1_t H^{s_c}([0,T] \times X) : \|u\|_{X^{s_c}} \leq 2A, \|P_{>M} u\|_{X^{s_c}} \leq 2\delta\}.
\end{equation}
\end{proof}
We can also prove the global well-posedness of \eqref{higher_order_Hartree} for small initial data.
\begin{proof}[Proof of Proposition \ref{GWP_theorem}]
The argument is similar to the proof of \cite[Theorem 1.2]{HTT11}.
Recall that we take $V$ as in \eqref{three_body_interaction} for some two-body interaction potential $\omega \in L^1(\T)$.
We want to show that we can iterate the argument in the proof of Proposition \ref{LWP_theorem}.
It suffices to bound $\|u(t)\|_{H^1}$. In the case where $\omega > 0$, we note that
\begin{equation*}
\|u(t)\|_{H^1} \leq E(u) + \mc{M}(u) \lesssim \|u(0)\|_{H^1}^2 + \|u(0)\|_{H^1}^6.
\end{equation*}
Here we used invariance of the energy and the mass, Remark \ref{energy_finite_remark}, and the Sobolev embedding theorem. Taking $\|u(0)\|_{H^1}$ sufficiently small, we can iterate the proof of Proposition \ref{LWP_theorem}.

If there are no positivity assumptions on $\omega$, we need to use a continuity argument. Recalling \eqref{quintc_energy}, we have that
\begin{align*}
\|u\|^2_{H^1} &= E(u) + M(u) + \frac{1}{3} \int (\omega*|u|^2)^2(x)|u(x)|^2 \, \dd x \\
& \leq \|u(0)\|_{H^1}^2 + \frac{c}{3}\|\omega\|_{L^1}^2\|u(0)\|_{H^1}^6 + \frac{c}{3}\|\omega\|_{L^1}^2\|u(t)\|_{H^1}^6,
\end{align*}
where we have used conservation of energy and mass, Remark \ref{energy_finite_remark}, and the Sobolev embedding theorem. For notational simplicity, set $K:=C\|\omega\|_{L^1}^2$. We consider the function $f(x) = x - \frac{1}{3}Kx^3$. On the interval $I := [0,K^{-1/2}]$, $f$ is increasing and takes its maximum value at $\frac{2}{3}K^{-1/2}$. We note that $f(x) \geq \frac{2}{3}x$ for all $x \in I$. Let $\eps$ be as in the proof of Proposition \ref{LWP_theorem} and define
\begin{equation*}
\eps_0^2 := \min \left\{\frac{2}{3}K^{-1/2},\frac{2}{3}\eps^2\right\}.
\end{equation*}
 Consider initial data with
\begin{equation*}
\|u(0)\|_{H^1}^2 + \frac{c}{3}\|w\|_{L^1}^2\|u(0)\|_{H^1}^6 < \eps_0^2.
\end{equation*}
Using the continuity of $t \mapsto \|u(t)\|_{H^1}^2$ for $t \in [0,2\pi)$,
we have that $\|u(t)\|^2_{H^1} \in I$ for all $t \in [0,2\pi)$.
Moreover,
\begin{equation*}
\|u(t)\|_{H^1}^2 \leq \frac{3}{2}f(\|u(t)\|_{H^1}^2) \leq \frac{3}{2}\eps_0^2 \leq \eps^2.
\end{equation*}
This allows us to iterate the proof of small data well-posedness from Proposition \ref{LWP_theorem}.
\end{proof}

\section{Proof of convergence}
\label{convergence_section}
In this section, we prove Theorem \ref{Convergence_theorem}.
For notational simplicity, we prove the case where $p = 2$ and then explain how to generalise the proof for larger $p$.
\begin{proof}[Proof of Theorem \ref{Convergence_theorem} for $p=2$]
The proofs for $V_N \to \pm \delta$ are analogous, so without loss of generality we take $V_N \to \delta$.
Since the proof for negative time is the same, we also work on the interval $[0,T]$. Using 
Duhamel's formula, along with the embeddings in \eqref{embeddings}, and changing variables,
we note that \eqref{convergence_result} follows upon showing that
\begin{equation}
\label{convergence_wanted}
\lim_{N \to \infty } \left\| \left(\int_{\mathbf{X}^2} V_N(x_1,x_2) |u^N(x-x_1)|^2 |u^N(x-x_2)|^2 \, \dd x_1 \dd x_2 \right)u^N  
- |u|^4u \right\|_{N^{s_c}} = 0. 
\end{equation}
We adopt the notation
\begin{equation*}
F_{V_N}(v) := \int_{\mathbf{X}^2} V_N(x_1,x_2) |v(x-x_1)|^2 |v(x-x_2)|^2 \, \dd x_1 \dd x_2.
\end{equation*}
Then, it suffices to show that
\begin{multline}
\label{long_limit}
\underbrace{\|F_{V_N}(u^N)(u^N-u) \|_{N^{s_c}}}_{I}
+ \underbrace{\| \left(F_{V_N}(u^N) - F_{V_N}(u) \right)u \|_{N^{s_c}}}_{II}
+ \underbrace{\|\left(F_{V_N}(u) - |u|^4\right)u \|_{N^{s_c}}}_{III} \to 0
\end{multline}
as $N \to \infty$.
From the proof of local well-posedness for large initial data, we recall \eqref{contraction_set_big_data}:
\begin{equation}
\label{contraction_set_recall}
B := \{u \in X^{s_c}([0,T]) \cap C^1_t H^{s_c}([0,T] \times X) : \|u\|_{X^{s_c}} \leq 2A, \|P_{>M} u\|_{X^{s_c}} \leq 2\delta\}.
\end{equation}
Here $A = \|u_0\|_{H^{s_c}}$, $\delta = \delta(A)$ is a small constant to be determined later, and $M = M(u_0,\delta) > 1$ is such that $\|P_{>M}u\|_{X^{s_c}} < \delta$.

We will apply the decomposition $1 = P_{\leq M} + P_{>M}$
to each of $u^N(x-x_1)$, $\overline{u^N(x-x_1)}$,
and $|u^N(x-x_2)|^2$.
Note that $\overline{P_M u} = P_M\overline{u} = P_M u$.
We can therefore write
\begin{align}
\nonumber    |u^N(x-x_1)|^2|u^N(x-x_2)|^2 &= \left(P_{>M}u^N(x-x_1)\right)^2|u^N(x-x_2)|^2\\
\nonumber        &\quad+\left(P_{\leq M}u^N(x-x_1)\right)\overline{\left(P_{>M}u^N(x-x_1)\right)}
            |u^N(x-x_2)|^2\\
\nonumber        &\quad+\overline{\left(P_{\leq M}u^N(x-x_1)\right)}\left(P_{>M}u^N(x-x_1)\right)
            |u^N(x-x_2)|^2\\
\nonumber        &\quad+ \left(P_{\leq M}u^N(x-x_1)\right)^2\left(P_{> M}|u^N(x-x_2)|^2\right)\\
\label{product_rewrite}        &\quad+ \left(P_{\leq M}u^N(x-x_1)\right)^2\left(P_{\leq M}|u^N(x-x_2)|^2\right)
\end{align}

We discuss how to bound the following terms in \eqref{product_rewrite}.
\begin{align*}
    Ia &:= \int_{\mathbf{X}^2} V_N(x_1,x_2) \left(P_{>M} u^N(x-x_1) \right)^2 |u^N(x-x_2)|^2 \, \dd x_1 \dd x_2\\
    Ib &:= \int_{\mathbf{X}^2} V_N(x_1,x_2) \left(P_{\leq M}u^N(x-x_1)\right)\overline{\left(P_{>M}u^N(x-x_1)\right)}
            |u^N(x-x_2)|^2 \, \dd x_1 \dd x_2\\
    Ic &:= \int_{\mathbf{X}^2} V_N(x_1,x_2)
            \left(P_{\leq M} u^N(x-x_1) \right)^2 \left(P_{\leq M} |u^N(x-x_2)|^2\right) \, \dd x_1 \dd x_2.
\end{align*} The remaining terms in \eqref{product_rewrite} are estimated in the same way as $Ib$.

The estimate for $Ia$ follows from \eqref{multilinear_estimate}. Indeed, we have
\[
    \|(Ia)(u^N-u)\|_{N^{s_c}} 
    \lesssim \|V_N\|_{L^1}\|P_{>M}u^N\|_{X^{s_c}}^2\|u^N\|_{X^{s_c}}^2\|u^N-u\|_{X^{s_c}}.
\]
Recalling that $\|V_N\|_{L^1} = 1$, we can pick $\delta$ sufficiently small that
\begin{equation}
\label{Ia_bound}
    \|(Ia)(u^N-u)\|_{N^{s_c}} \leq \frac{1}{32} \|u^N-u\|_{X^{s_c}}.
\end{equation}

In a similar way, we have \[
    \|(Ib)(u^N-u)\|_{N^{s_c}} \lesssim \|P_{> M}u^N\|_{X^{s_c}}\|P_{\leq M}u^N\|_{X^{s_c}}\|u^N\|^2_{X^{s_c}}\|u^N-u\|_{X^{s_c}}.
\] Using the boundedness of $P_{\leq M}$, we obtain
\begin{equation}
\label{I_b_bound}
    \|(Ib)(u^N-u)\|_{N^{s_c}} \leq \frac{1}{32} \|u^N-u\|_{X^{s_c}},
\end{equation} 
provided that $\delta$ is small enough.

Finally, by H\"older's inequality, \eqref{Ns_embedding}, and \eqref{embeddings} we have
\begin{align*}
\|(Ic)(&u^N-u)\|_{N^{s_c}} \\
&\lesssim T \int_{\mathbf{X}^2} V(x_1,x_2)\|P_{\leq M} |u^N(\cdot-x_1)|^2 P_{\leq M} |u^N(\cdot-x_2)|^2\|_{L^\infty_{t,x}} \dd x_1 \, \dd x_2\nonumber \|(u^N-u)\|_{L^\infty_t H^{s_c}_x}\\
    &\lesssim T \|P_{\leq M}u^N\|_{L^\infty_{t,x}}^4\|u^N-u\|_{X^{s_c}}.
\end{align*}
Here we recall that $V_N > 0$. Using Strichartz's and Bernstein's estimates, we have the bounds
\begin{equation}\label{eqn:PleqMinfty}
    \|P_{\leq M}u^N\|_{L^\infty_{t,x}}^4
        \lesssim M^{4(\frac{3}{2}-s_c)}\|P_{\leq M}u^N\|_{X^{s_c}}^4
        \lesssim M^2(2A)^4.
\end{equation}
Picking $T$ sufficiently small depending on $M$, $A$, and $\delta$, we find
\begin{equation}
\label{Ic_bound}
\|(Ic)(u^N-u)\|_{N^{s_c}} \leq \frac{1}{32}\|u^N-u\|_{X^{s_c}}.
\end{equation}
Combining \eqref{Ia_bound}, \eqref{I_b_bound}, and \eqref{Ic_bound}, we have
\begin{equation}
\label{I_bound}
I \leq \frac{1}{8}\|u^N-u\|_{X^{s_c}}.
\end{equation}
For term $II$ in \eqref{long_limit}, we use the identity
\begin{multline}
\label{convergence_identity}
|u^N(x-x_1)|^2 |u^N(x-x_2)|^2 - |u(x-x_1)|^2 |u(x-x_2)|^2 \\
= |u^N(x-x_1)|^2 \left(|u^N(x-x_2)|^2 - |u(x-x_2)|^2\right)  \\
+ \left((|u^N(x-x_1)|^2 - |u(x-x_1)|^2)\right)|u(x-x_2)|^2.
\end{multline}
Combining \eqref{convergence_identity} with the identity $|u|^2 - |v|^2 = u\overline{(u-v)} + (u-v)\overline{v}$, we see that we can estimate term $II$ by arguing as in \eqref{I_bound}. So
\begin{equation}
\label{II_bound}
II \leq \frac{1}{8}\|u^N-u\|_{X^{s_c}}.
\end{equation}
For term $III$ in \eqref{long_limit}, we have
\begin{multline}
\label{delta_convolution_decomposition}
\int_{\mathbf{X}^2} V_N(x_1,x_2) |u(x-x_1)|^2 |u(x-x_2)|^2 \, \dd x_1 \dd x_2 - |u(x)|^4 \\
= \int_{\mathbf{X}^2} V_N(x_1,x_2) \bigg[\bigg(|u(x-x_1)|^2 - |u(x)|^2\bigg)|u(x)|^2 \\
+ |u(x-x_1)|^2\bigg(|u(x-x_2)|^2 - |u(x)|^2\bigg)  \bigg] \dd x_1 \dd x_2,
\end{multline}
where we used $\int_{\mathbf{X}^2} V_N(x_1,x_2) = 1$. Using \eqref{delta_convolution_decomposition}, we split $III$ from \eqref{long_limit} into two terms. Both terms are bounded in the same way, so we only detail the first. Namely, we bound
\begin{equation}
\label{III_bound_1}
\left\|\int V_N(x_1,x_2) \left(|u(x-x_1)|^2 - |u(x)|^2\right) \, \dd x_1 \dd x_2 |u|^2 u \right\|_{N^{s_c}}.
\end{equation}
Since $V_N \to \delta$, we can assume without loss of generality that $\mathrm{supp}(V_N) \subset [-\frac{1}{N},\frac{1}{N}]^3$. Applying Proposition \ref{multilinear_lemma}, we have
\begin{equation}
\label{III_bound}
\eqref{III_bound_1} \lesssim \sup_{[-\frac{1}{N},\frac{1}{N}]^3} \|u(\cdot) - u(\cdot - x_1)\|_{X^{s_c}}\|u\|^4_{X^{s_c}} \to 0
\end{equation}
by the local well-posedness of the local NLS.
Combining \eqref{I_bound}, \eqref{II_bound}, and \eqref{III_bound}, we have proved \eqref{convergence_wanted}. To get the result on $[0,T]$, use an inductive argument, similar to the proofs of \cite[Lemmas 5.4 and 5.7]{RS23}.
\end{proof}
\begin{remark}
In the case that \eqref{p_NLS} is globally well-posed, we can use an induction argument to extend the convergence argument to arbitrary time for each fixed $u_0$. See the proofs of \cite[Lemmas 5.4 and 5.7]{RS23} for a similar argument.
\end{remark}
\begin{remark}
We now explain how to generalise the proof of Theorem \ref{Convergence_theorem} to higher-order nonlinearities. We now write
\begin{equation*}
F_{V_N}(u) := \int_{\mathbf{X}^p} V_{N}(x-x_1,\ldots,x-x_p) |u(x_1)|^2\ldots|u(x_p)|^2 \, \dd x_1 \ldots \dd x_p.
\end{equation*}
We consider $B$ as in \eqref{contraction_set_recall}, although possibly for a different choice of $\delta$ and $M$.
As in \eqref{long_limit}, we split into the three terms $I$, $II$, and $III$.
For $I$, we iterate the decomposition $1 = P_{>M} + P_{\leq M}$, applying it to each $|u^N(x-x_j)|^2$.
Every term which features a $P_{>M}$ is estimated as $\|(Ia)(u^N-u)\|_{N^{s_c}}$ was estimated.
The one term featuring only $P_{\leq M}$ is estimated as $\|(Ic)(u^N-u)\|_{N^{s_c}}$ was estimated.
Note that, by \eqref{eqn:PleqMinfty} and \eqref{critical_sobolev},
we always get a term of $M^2$, regardless of the value of $p\geq 2$.
By picking $\delta$ and $T$ sufficiently small, one recovers \eqref{I_bound}. In this case, instead of picking $\frac{1}{32}$, we pick a small enough constant so that \eqref{I_bound} still holds with constant $\frac{1}{8}$. 

The bounds on $II$ and $III$ follow by applying decompositions analogous to
\eqref{convergence_identity} and \eqref{delta_convolution_decomposition},
and arguing as in the proofs of \eqref{II_bound} and \eqref{III_bound}, respectively.
\end{remark}

\section{Mixed Hartree nonlinearities}
\label{mixed_nl_section}
In this section, we consider the case of mixed Hartree-type nonlinearities. Recall that for $p \in \N$, we write
\begin{equation*}
\label{5.mathcal_N}
\mathcal{N}_p(u) := \left(\int_{\mathbf{X}^p} V(x-x_1,\ldots,x - x_p) |u(x_1)|^2 \ldots |u(x_p)|^2 \, \dd x_1 \ldots \dd x_p, \right)u(x).
\end{equation*}
and consider the mixed-Hartree equation given by
\begin{equation}
\label{5.mixed_nonlinearity}
\ii \partial_t u + \Delta u = \lambda_1 \mathcal{N}_{p_1}(u) + \lambda_2 \mathcal{N}_{p_2}(u).
\end{equation}
\subsection{Energy and multilinear estimates}
In this section, we state and prove some estimates we use to prove the well-posedness of \eqref{5.mixed_nonlinearity}.

To consider the global well-posedness of \eqref{5.mixed_nonlinearity} in the case of a cubic-quintic nonlinearity, we need estimates on the 
kinetic energy of the solution. In other words, we need to bound the size of $\|\nabla u\|^2_{L^2_x L^\infty_I}$.
Throughout the remainder of this section, we adopt the convention that $V_{1}(x-y) = \omega'(x-y)$ and $V_{2}$ is as in \eqref{three_body_interaction},
for some even two-body interaction potentials $\omega,\omega' \in L^1(\mathbf{X})$.
Then the Hamiltonian is given by
\begin{multline}
\label{Hamiltonian_mixed_nonlinearity}
E(u) = \int_{\mathbf{X}} |\nabla u(x)|^2\dd x + \frac{\lambda_1}{2} \int_{\mathbf{X}} V_1(x-y) |u(x)|^2|u(y)|^2 \, \dd x \, \dd y \\
+ \frac{\lambda_2}{3} \int_{\mathbf{X}^2} V_2(x-y,x-z) |u(x)|^2||u(y)|^2|u(z)|^2 \, \dd x \, \dd y \, \dd z .
\end{multline}

We note that $E(u)$ is finite for $u\in H^1(\mathbf{X})$ by Lemma \ref{energy_finite_lemma} and Remark \ref{energy_finite_remark}. We can now prove the following bound on the kinetic energy.
\begin{lemma}
\label{energy_bound_lemma}
Suppose that $w \in L^1$ is even, $\omega'=\omega$, $\lambda_1 \in \R$, and $\lambda_2 > 0$. Then 
\begin{equation}
\label{mixed_nonlinearity_energy_bound}
\|\nabla u\|^2_{L^\infty_t L^2_x} \lesssim E(u) + \mathcal{M}(u).
\end{equation}
\end{lemma}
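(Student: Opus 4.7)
The plan is to rearrange the Hamiltonian \eqref{Hamiltonian_mixed_nonlinearity} to solve for the kinetic energy, then use the positive quintic potential term (recall $\lambda_2>0$) to absorb the possibly indefinite cubic term. Since $E(u)$ and $M(u)$ are conserved, any pointwise-in-$t$ bound automatically upgrades to an $L^\infty_t$ bound.

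First, since $\omega'=\omega$ is even and $V_2$ has the convolution form \eqref{three_body_interaction}, a direct computation using the symmetry of the three summands in \eqref{three_body_interaction} and the substitution $\omega(x-y)=\omega(y-x)$ shows
\begin{equation*}
\int_{\mathbf{X}^2} V_2(x-y,x-z)|u(x)|^2|u(y)|^2|u(z)|^2 \,\dd x \,\dd y \,\dd z
= \int_{\mathbf{X}} (\omega*|u|^2)^2(x)|u(x)|^2 \,\dd x,
\end{equation*}
and similarly the cubic term is $\int_{\mathbf{X}}(\omega*|u|^2)(x)|u(x)|^2\,\dd x$. Thus \eqref{Hamiltonian_mixed_nonlinearity} rearranges to
\begin{equation*}
\|\nabla u\|_{L^2}^2 = E(u) - \frac{\lambda_1}{2}\int (\omega*|u|^2)|u|^2\,\dd x - \frac{\lambda_2}{3}\int (\omega*|u|^2)^2|u|^2\,\dd x.
\end{equation*}

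The second step is to apply an elementary AM--GM inequality in the integrand: for any $\eps>0$,
\begin{equation*}
\frac{|\lambda_1|}{2}|\omega*|u|^2|\cdot|u|^2 \leq \frac{\eps}{2}(\omega*|u|^2)^2|u|^2 + \frac{\lambda_1^2}{8\eps}|u|^2.
\end{equation*}
Choosing $\eps = 2\lambda_2/3$ and integrating gives
\begin{equation*}
\frac{|\lambda_1|}{2}\Big|\int (\omega*|u|^2)|u|^2\,\dd x\Big|
\leq \frac{\lambda_2}{3}\int(\omega*|u|^2)^2|u|^2\,\dd x + C(\lambda_1,\lambda_2)\,M(u).
\end{equation*}
Substituting back, the positive quintic contribution is cancelled and we are left with
\begin{equation*}
\|\nabla u(t)\|_{L^2}^2 \leq E(u(t)) + C\,M(u(t)).
\end{equation*}

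Finally, $E$ and $M$ are conserved along the flow of \eqref{5.mixed_nonlinearity} (so the right-hand side is $t$-independent), which yields the desired bound after taking $\sup_t$. The only mild obstacle is verifying the convolution identity for the quintic term and checking finiteness of all integrals at $H^1$ regularity, but both follow at once from the even symmetry of $\omega$ and from Lemma \ref{energy_finite_lemma} and Remark \ref{energy_finite_remark}.
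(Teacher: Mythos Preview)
Your proof is correct and follows essentially the same route as the paper: both rewrite the interaction terms in the convolution form $\frac{\lambda_2}{3}(\omega*|u|^2)^2|u|^2 + \frac{\lambda_1}{2}(\omega*|u|^2)|u|^2$ and then use an elementary quadratic inequality to bound the potential energy below by $-C\,M(u)$. The paper leaves that last step implicit (``So there exists a $C>0$\ldots''), while you make it explicit via AM--GM; the underlying argument is the same.
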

\begin{remark}
Let us remark that if $\lambda_1, \lambda_2 \geq 0$, we trivially have \eqref{mixed_nonlinearity_energy_bound} by the definition of $E$.
\end{remark}
\begin{proof}[Proof of Lemma \ref{energy_bound_lemma}]
We note that we can write
\begin{multline*}
\frac{\lambda_1}{2}\int_{\mathbf{X}^2} \omega(x-y)  |u(x)|^2 |u(y)|^2 \, \dd x \, \dd y + \frac{\lambda_2}{3} \int_{\mathbf{X}^3} V_2(x-y,x-z)|u(x)|^2 |u(y)|^2|u(z)|^2 \,  \dd x \, \dd y \, \dd z \\
= \int_{\mathbf{X}}  \left[ \frac{\lambda_2}{3}(\omega*|u|^2)^2|u(x)|^2 + \frac{\lambda_1}{2} (\omega*|u|^2)|u(x)|^2\right] \dd x .
\end{multline*}
So there exists a $C > 0$ such that \eqref{mixed_nonlinearity_energy_bound} holds.
\end{proof}

We will use the following multilinear estimate to prove local well-posedness for the mixed Hartree equation.
\begin{proposition}
\label{mixed_multlinear_proposition}
Suppose $p_2>p_1$, with $p_i \in \N$. Let $s_c = s_c(p_2)$ be as in \eqref{critical_sobolev} with $p=p_2$, and suppose that $s>s_c$. For $u_j \in X^{s}(I)$, $j \in \{1,\ldots,2p_1+1\}$, the following estimate holds.
\begin{multline}
\label{multlinear_mixed_estimate}
\bigg\| \int_{\mathbf{X}^{p_1}} \bigg[ V_{p_1}(x-x_1,\ldots,x-x_{p_1}) \tilde{u}_{1}(x_1,t) \tilde{u}_{2}(x_1,t) \ldots \tilde{u}_{2p_1-1}(x_p,t) \tilde{u}_{2p_1}(x_{p_1},t) \bigg] \\
\times \dd x_1 \ldots \dd x_{p_1} \, \tilde{u}_{2p_1+1}(x,t) \bigg\|_{N^{s_c(p_2)}(I)} \\ \lesssim |I|^{c}\|V_{p_1}\|_{L^1} 
\prod^{2p_1+1}_{k=1} \|u_k\|_{X^{s_c(p_2)}(I)}.
\end{multline}
Here $\tilde{u}_j$ denotes either $u_j$ or $\overline{u}_j$ and $0<c<1$.
\end{proposition}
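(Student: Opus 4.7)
The plan is to adapt the proof of Proposition \ref{multilinear_lemma} with $p$ replaced by $p_1$, but to measure in the subcritical norm $X^{s_c(p_2)}$ rather than the critical $X^{s_c(p_1)}$. Since $p_2 > p_1$, each Littlewood--Paley leg carries an extra $\frac{1}{p_1} - \frac{1}{p_2} > 0$ of regularity, and this subcritical room will be exchanged for the time factor $|I|^c$ via a H\"older argument in time.

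The first few steps mirror those of Proposition \ref{multilinear_lemma}: Proposition \ref{duality_proposition} reduces the $N^{s_c(p_2)}$ norm to a spacetime pairing against a unit-norm test function $v \in Y^{-s_c(p_2)}(I)$; after changing variables in the $\mathbf{X}^{p_1}$ convolution and inserting Littlewood--Paley projections, the sum over dyadic frequencies splits into the same two cases as before: Case 1, where the dual frequency $M_0$ balances the largest Fourier frequency $M_{2p_1+1}$ among the $u_j$ (so that box localisation at scale $M_1$ is needed), and Case 2, where two of the $u_j$ frequencies are largest. In each case one applies H\"older in space followed by Strichartz (Proposition \ref{Strichartz_estimate_proposition}) on the four highest-frequency legs and $L^\infty_{t,x}$ bounds via Bernstein and $Y^0 \hookrightarrow L^\infty_t L^2_x$ on the remaining $2p_1-2$ legs.

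The factor $|I|^c$ is obtained by using the slightly suboptimal Strichartz exponent $q = 4 + \eta$ on the four highest legs (instead of the critical choice $q = 4$), giving $\|P_M u\|_{L^{4+\eta}_{t,x}} \lesssim M^{3/2 - 5/(4+\eta)} \|P_M u\|_{Y^0}$, and then applying H\"older in time: the product of four $L^{4+\eta}_{t,x}$ factors lies in $L^{(4+\eta)/4}_{t,x}$, which embeds into $L^1_{t,x}(I \times \mathbf{X})$ with a gain of $|I|^{\eta/(4+\eta)}$. The resulting loss $\delta(\eta) = \frac{5\eta}{4(4+\eta)} > 0$ in the per-leg $M$-exponent on the four Strichartz legs is absorbed by the spare $M^{-(1/p_1 - 1/p_2)}$ coming from measuring each $u_j$ in $X^{s_c(p_2)}$ instead of $X^{s_c(p_1)}$. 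The iterated Cauchy--Schwarz summation over Littlewood--Paley frequencies, carried out as in \eqref{M1M2balance_case1} and \eqref{M1M2balance_case2}, then closes and, after restoring $\|V_{p_1}\|_{L^1}$, yields \eqref{multlinear_mixed_estimate}.

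The principal technical obstacle is the combinatorics: one must choose $\eta > 0$ small enough that, after the H\"older trade, the exponent on every $M_j$ in both cases retains a strictly negative sign so that the Cauchy--Schwarz summation converges. This is tightest when $p_1 = 1$, where only three $u_j$ legs are available to absorb the loss; fortunately, the regularity gap $s_c(p_2) - s_c(1) = 1 - \frac{1}{p_2} > 0$ for every admissible $p_2 \geq 2$ provides enough margin. An alternative route would be to pass through trilinear $Z'$-estimates in the spirit of \cite{IP11,IP12b,LYYZ23}, which is in any case required for Proposition \ref{multilinearZ_estimates} that follows.
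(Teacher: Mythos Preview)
Your proposal is correct and follows essentially the same approach as the paper: rerun the proof of Proposition~\ref{multilinear_lemma} with $p_1$ in place of $p$, and trade the subcritical room $s_c(p_2)-s_c(p_1)=\tfrac{1}{p_1}-\tfrac{1}{p_2}>0$ for a factor $|I|^{c}$ via H\"older and a Strichartz exponent $q>4$. The only difference is cosmetic: the paper applies the H\"older--Strichartz trade to a single leg (box-localising $P_{M_1}u_1$ at scale $M_2$, estimating $\|P_{\mathcal C_2}P_{M_1}u_1\|_{L^4_{t,x}}\lesssim |I|^{(q-4)/(4q)}M_2^{3/2-5/q}\|P_{\mathcal C_2}P_{M_1}u_1\|_{Y^0}$, and then choosing $q$ so that the altered $M_2$-power still balances \eqref{M1M2balance_case1}--\eqref{M1M2balance_case2}), whereas you spread the loss symmetrically over all four $L^4$ legs with $q=4+\eta$ and H\"older the product back to $L^1_{t,x}$. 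Both variants close for $\eta$ (equivalently $q-4$) small enough relative to $\tfrac{1}{p_1}-\tfrac{1}{p_2}$.
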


\begin{proof}
    The proof is essentially the same as that of Lemma \ref{multilinear_lemma};
    we simply outline how to extract the power of $|I|^c$.

    Suppose that $M_1 \geq M_2$ are dyadic numbers, and let $\mathcal{C}_2$ be a cube of size $M_2$.
    Then, applying H\"older's inequality and the Strichartz estimate (\ref{Strichartz_estimate}), we have
    \begin{align*}
        \|P_{\mathcal{C}_2}P_{M_1}u_1\|_{L^4_{t,x}} &\lesssim
            |I|^{\frac{q-4}{4q}}\|P_{\mathcal{C}_2}P_{M_1}u_1\|_{L^q_{t,x}}\\
            &\lesssim |I|^{\frac{q-4}{4q}} M_2^{\frac{3}{2}-\frac{5}{q}}\|P_{\mathcal{C}_2}P_{M_1}u_1\|_{Y^0}
    \end{align*} for any $q > \frac{10}{3}$.
    Summing over cubes $\mathcal{C}_2$ we obtain, by orthogonality, the inequality
    \begin{equation}\label{PM1u1L4}
         \|P_{M_1}u_1\|_{L^4_{t,x}} \lesssim |I|^{\frac{q-4}{4q}} M_2^{\frac{3}{2}-\frac{5}{q}}\|P_{M_1}u_1\|_{Y^0},
    \end{equation}
    and we choose $q$ to ensure that the factor of $M_2^{\frac{3}{2}-\frac{5}{q}}$
    balances the calculations in (\ref{M1M2balance_case1}) and (\ref{M1M2balance_case2}).
    Note that $c = \frac{q-4}{4q}<1$ in any case.
\end{proof}

When $p_1 = 1$ and $p_2 = 2$ --- so that $s_c(p_2) = 1$ --- we have the following
multilinear estimates involving the interpolation norm $\|\cdot\|_{Z'(I)}$.
\begin{proposition}\label{multilinearZ_estimates}
If $u_j \in X^{1}(I)$, $j \in \{1,\ldots,3\}$, then the following estimate holds:
\begin{multline}\label{multilinear_estimate_p5}
\bigg\| \int_{\mathbf{X}^{2}} \bigg[V(x-y,x-z) \tilde{u}_{1}(y,t) \tilde{u}_{2}(y,t) \tilde{u}_{3}(z,t) \tilde{u}_4(z,t)\bigg]
    \dd y \, \dd z \, \tilde{u}_{5}(x,t) \bigg\|_{N^{1}(I)}\\
    \lesssim \|V\|_{L^1} \sum_{j=1}^5\|u_j\|_{X^{1}(I)}\prod_{k\neq j}\|u_k\|_{Z'(I)}.
\end{multline}
\begin{equation}\label{multilinear_estimate_p3_I}
\bigg\| \int_{\mathbf{X}} \bigg[V(x-y) \tilde{u}_{1}(y,t) \tilde{u}_{2}(y,t) \bigg] \dd y \, \tilde{u}_{3}(x,t) \bigg\|_{N^{1}(I)} \lesssim |I|^{\frac{1}{5}}\|V\|_{L^1} 
\prod_{j=1}^3\|u_j\|_{X^{1}(I)}.
\end{equation}
Here $\tilde{u}_j$ denotes either $u_j$ or $\overline{u}_j$.
\end{proposition}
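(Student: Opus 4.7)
The plan is to handle both inequalities by duality against $Y^{-1}$, followed by a change of variables that isolates $V$ in an outer integral, and then to appeal to a translation-uniform multilinear form. By Proposition \ref{duality_proposition}, it suffices to pair the left-hand side against $v \in Y^{-1}(I)$ with $\|v\|_{Y^{-1}}\leq 1$; writing the resulting spacetime integral and substituting $y \mapsto x-y$ (and $z \mapsto x-z$ in the quintic case), we arrive at an expression of the form
\[
    \int_{\mathbf{X}^2}|V(y,z)|\,\mathcal{J}(y,z)\,\dd y\,\dd z
    \quad\text{or}\quad
    \int_{\mathbf{X}}|V(y)|\,\mathcal{J}(y)\,\dd y,
\]
where $\mathcal{J}$ is a spacetime multilinear form in translates of $u_1,\ldots,u_{2p_1+1}$ and $v$. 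Since $\|\cdot\|_{X^1}$, $\|\cdot\|_{Z'}$, and $\|\cdot\|_{Y^{-1}}$ are translation-invariant, it suffices to bound $\mathcal{J}$ uniformly in its translation parameters; integration in $(y,z)$ (respectively $y$) then produces the factor $\|V\|_{L^1}$.

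For \eqref{multilinear_estimate_p5}, the translation-uniform five-linear-plus-$v$ bound is (the dual form of) the quintic multilinear estimate for $|u|^4u$ on $\mathbb{T}^3$ proved by Ionescu and Pavlovi\'c in \cite{IP11}. Its proof uses the trilinear estimates flagged in the remark after Proposition \ref{multilinear_lemma}: one performs Littlewood--Paley decompositions of all six functions, partitions into cases by frequency ordering, and in each case groups the factors into two trilinear blocks. The symmetric right-hand side $\sum_j\|u_j\|_{X^1}\prod_{k\neq j}\|u_k\|_{Z'}$ corresponds to placing the function with the largest frequency in the stronger $X^1$ norm while controlling the remaining four in $Z'$; the interpolation $\|u\|_{Z'} = \|u\|_Z^{1/2}\|u\|_{X^1}^{1/2}$, combined with the $L^{4.1}_{t,x}$ and $L^{100}_{t,x}$ Strichartz bounds encoded in $Z$, provides the summability at critical regularity $s_c=1$. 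For \eqref{multilinear_estimate_p3_I} we follow the scheme of Proposition \ref{mixed_multlinear_proposition} with $p_1 = 1$, applied to the translated four-linear form: a Littlewood--Paley decomposition plus Strichartz at exponent $q>10/3$ produces an overall factor of $|I|^{(q-4)/(4q)}$ as in \eqref{PM1u1L4}, and since the cubic nonlinearity is strictly subcritical at $s_c(p_2)=1$ (its own scaling-critical index being $1/2$) the resulting dyadic sums close by Cauchy--Schwarz with room to spare, so we may pick $q$ large (e.g. $q=20$) with $(q-4)/(4q)\geq 1/5$.

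The main obstacle is \eqref{multilinear_estimate_p5}: the pure $X^{s_c}$-norm scheme of Proposition \ref{multilinear_lemma} yields only $\|V\|_{L^1}\prod_j\|u_j\|_{X^1}$ on the right, which is too coarse for the perturbative argument around the quintic NLS used in Theorem \ref{GWP_mixed_nonlinearity_close_quintic}; the finer trilinear/$Z'$-based estimate of \cite{IP11} is genuinely required. The delicate bookkeeping is verifying that the quintic estimate of \cite{IP11} survives intact under the translations introduced by the change of variables (which it does, thanks to translation invariance of all relevant norms) and threading the symmetric combination $\sum_j\|u_j\|_{X^1}\prod_{k\neq j}\|u_k\|_{Z'}$ consistently through the frequency case analysis so that each $u_j$ is eligible to carry the $X^1$ norm.
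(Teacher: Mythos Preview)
Your proposal is correct and matches the paper's approach in substance. For \eqref{multilinear_estimate_p3_I} you identify exactly what the paper does: specialise Proposition~\ref{mixed_multlinear_proposition} to $p_1=1$, $p_2=2$, and $q=20$, which gives the exponent $(q-4)/(4q)=1/5$. For \eqref{multilinear_estimate_p5} the paper simply cites \cite[Lemma~23]{NS20}, which already records the Hartree-type quintic estimate with the $V$ convolution built in; your version unpacks this by reducing to the local quintic estimate of Ionescu--Pausader via the change of variables and translation invariance, which is precisely how \cite{NS20} obtains its Lemma~23 from \cite{IP11}. (One small slip: the authors of \cite{IP11} are Ionescu and \emph{Pausader}, not Pavlovi\'c.)
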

\begin{proof}
    The proof of (\ref{multilinear_estimate_p5}) can be found, for example, in \cite[Lemma 23]{NS20},
    while (\ref{multilinear_estimate_p3_I}) is a special case of (\ref{multlinear_mixed_estimate})
    with $p_1=1$, $p_2=2$, and $q=20$.
\end{proof}
\subsection{Well-posedness of mixed Hartree-equation for general $p$}
We now prove Propositions \ref{mixed_nonlinearity_LWP} and \ref{mixed_nonlinearity_GWP}.
\begin{proof}[Proof of Proposition \ref{mixed_nonlinearity_LWP}]
We argue similarly to the proof of Proposition \ref{LWP_theorem}, using the multilinear estimates from Proposition \ref{mixed_multlinear_proposition} as well as Proposition \ref{multilinear_lemma}. Our contraction is done on the set
\begin{equation*}
B_1 := \{u \in X^{s_c}([0,T)) \cap C_t([0,T);H^{s_c}(\T^3)) : \|u\|_{X^{s_c}([0,T))} \leq 2\eps\},
\end{equation*}
for $\eps$ sufficiently small. We omit the details.
\end{proof}

\begin{proof}[Proof of Proposition \ref{mixed_nonlinearity_GWP}]
In the case that $\lambda_i > 0$ and $\omega,\omega' \geq 0$, the energy is positive, so we argue as in the defocusing case of the proof of Proposition \ref{GWP_theorem}. If $\lambda_2 > 0$, $\omega=\omega'$, and there is no positivity assumption on $\omega$, we can use \eqref{mixed_nonlinearity_energy_bound} and argue similarly to the proof of the defocusing case of the proof of Proposition \ref{GWP_theorem}. We omit the details. In the case that $\lambda_2 < 0$ or $\omega \neq \omega'$, we explain how to adapt the proof of the focusing case of the proof of Proposition \ref{GWP_theorem}. We now have
\begin{multline*}
\|u\|^2_{H^1} = E(u) + M(u) \\
+ \frac{|\lambda_1|}{2} \int_{\mathbf{X}} (\omega'*|u|^2)(x)|u(x)|^2 \, \dd x + \frac{|\lambda_2|}{3} \int_{\mathbf{X}} (\omega*|u|^2)^2(x)|u(x)|^2 \, \dd x \\
 \leq \|u(0)\|_{H^1}^2 + \frac{c|\lambda_1|}{2}\|\omega'\|_{L^1}\|u(0)\|_{H^1}^4 +  \frac{c|\lambda_2|}{3}\|\omega\|_{L^1}^2\|u(0)\|_{H^1}^6 \\
 + \frac{c|\lambda_1|}{2}\|\omega'\|_{L^1}\|u(t)\|_{H^1}^4 + \frac{c|\lambda_2|}{3}\|\omega\|_{L^1}^2\|u(t)\|_{H^1}^6.
\end{multline*}
Here we have used Lemma \ref{energy_finite_lemma} and Remark \ref{energy_finite_remark}. We set $K_1 := c|\lambda_1|\|\omega'\|_{L^1}$, $K_2 := c|\lambda_2|\|\omega\|_{L^1}^2$. We now consider the function $f(x) = x - \frac{1}{2}K_1x^2 - \frac{1}{3}K_2x^3$.
Let \[
    x^*:=\frac{-K_1 + \sqrt{K_1^2 + 4K_2}}{2K_2} > 0.
\]
Then $f$ increases on the the interval $I := [0,x^*]$ up to its maximum value $f(x^*)$. We also note that $f(x) \geq \frac{1}{2}x$ on $[0,x^*]$. Let $\eps$ be as in the proof of Proposition \ref{mixed_nonlinearity_LWP}. We set
\begin{equation*}
\eps_0 := \min \left\{f(x^*), \frac{1}{2}\eps^2  \right\}.
\end{equation*}
Consider initial data with
\begin{equation*}
\|u(0)\|_{H^1}^2 + \frac{c|\lambda_1|}{2}\|\omega'\|_{L^1}\|u(0)\|_{H^1}^4 +  \frac{c|\lambda_2|}{3}\|\omega\|_{L^1}^2\|u(0)\|_{H^1}^6 < \eps_0^2.
\end{equation*}
Using the continuity of $t \mapsto \|u(t)\|_{H^1}^2$ for $t \in [0,2\pi)$, we have $\|u(t)\|^2_{H^1} \in I$ for all $t \in [0,2\pi)$. Moreover
\begin{equation*}
\|u(t)\|_{H^1}^2 \leq 2f(\|u(t)\|_{H^1}^2) \leq 2\eps_0^2 \leq \eps^2.
\end{equation*}
This allows us to iterate the proof of small data well-posedness from Proposition \ref{mixed_nonlinearity_LWP}.
\end{proof}

\subsection{Global well-posedness for all data for perturbations of the quintic NLS}
In this section, we prove Theorem \ref{GWP_mixed_nonlinearity_close_quintic} and consider the cases where \eqref{H1_estimate} holds.

\begin{proof}[Proof of Theorem \ref{GWP_mixed_nonlinearity_close_quintic}]
The proof is similar to the proof of \cite[Theorem 1.2]{LYYZ23}. We explain the differences in our case.
Fix $u_0 \in H^1(\mathbf{X})$ and consider a subinterval $ J = (a,b) \subset (-T,T) =: I$.
Let us assume for now that $|J|\leq 1$ and is sufficiently small --- to be determined later.
As in the proof of \cite[Theorem 1.2]{LYYZ23}, we use the well-posedness of the energy-critical quintic NLS from \cite{IP12b}.
Specifically, on any interval $I' = (-T',T')$ with $|I'|\leq 1$, the problem
\begin{equation}
\label{quintic_NLS}
\left\{
\begin{alignedat}{2}
\ii \partial_t v + \Delta v &= |v|^4v, \\
v_0 &\in H^1(\mathbf{X}),
\end{alignedat}\right.
\end{equation}
has a unique solution $v\in X^1(I')$ satisfying the following energy estimates:
\begin{equation}
\label{X_Z_norm_bound}
\|v\|_{X^1(I')} + \|v\|_{Z'(I')} \leq C(M(v_0),E(v_0)) < \infty.
\end{equation}

Consider the difference equation given by
\begin{equation}
\label{difference_equation}
\ii \partial_t w + \Delta w = \mathcal{N}_1(w + v) + \mathcal{N}_{2,N}(w+v) - |v|^4v.
\end{equation}
We now follow the construction in \cite{LYYZ23}, partitioning $I$ into $m+1$ subintervals $I_j = [t_{j},t_{j+1}]\subset I$
of length at most 1 and such that \[
    \|v\|_{Z'(I_j)} \leq \eta,
\] where $\eta>0$ will be specified later.
We restrict our attention to those $I_j$ which intersect $J=(a,b)$,
and assume that the solution $w$ to \eqref{difference_equation} (started at $t_{j-1}$) satisfies the bound
\begin{equation}\label{maxinduction}
     \max\{\|w\|_{L_t^\infty(I_{j-1}) H^1_x}, \|w\|_{X^1(I_{j-1})}\} \leq (2C)^{j-1}|J|^{\frac{1}{20}},
\end{equation} for some absolute constant $C>0$ independent of $j$ and $\eta$. In particular,
\begin{equation}
\label{w_tj_induction}
\|w(t_j)\|_{H_x^1} \leq \|w\|_{L_t^\infty(I_{j-1}) H^1_x} \leq (2C)^{j-1}|J|^{\frac{1}{20}}.
\end{equation}
We now construct a solution on $I_j$ with initial condition $w(t_j)$. Define
\begin{equation*}
\label{S_j}
\mc{S}_j := \{w \in X^1(I_j) : \max\{\|w\|_{L_t^\infty(I_j) H^1_x}, \|w\|_{X^1(I_j)}\} \leq (2C)^{j}|J|^{\frac{1}{20}}\}.
\end{equation*}
We show that the operator \[
    L_jw := \e^{\ii(t-t_j)\Delta}w(t_j) - \ii\int_{t_j}^{t}\e^{\ii(t-s)\Delta}\left(
        \mathcal{N}_1(v+w) + \mathcal{N}_{2,N}(w+v) -|v|^4v\right)(s)\,\dd s
\] is a contraction on $\mathcal{S}_j$.

We now show that $L_j$ maps $\mathcal{S}_j$ to itself,
using the assumption that $V_{2,N} \to \delta_2$. Indeed, arguing similarly to \eqref{III_bound},
we can choose $N = N(\|u(a)\|_{H^1})$ large enough and the support of $V_{2,N}$ small enough that
\begin{equation}\label{delta_Veps_bound}
    \int_{\mathbf{X}^2}V_{2,N}(x_1,x_2)\left[|v(x-x_1)|^2|v(x-x_2)|^2v(x) - |v|^4v(x)\right]\dd x_1\,\dd x_2 < \frac{1}{20}(2C)^j|J|^\frac{1}{20}.    
\end{equation}
This controls the highest-order terms in $v$ in the nonlinearity. For $w \in \mc{S}_j$, by \eqref{maxinduction}, we have
\begin{align}
    \max \{ &\|L_j w\|_{L_t^\infty(I_j) H_x^1}, \|L_j w\|_{X^1(I_j)} \}\nonumber\\
        &\leq \|w(t_j)\|_{H_x^1} + \frac{1}{20}(2C)^j|J|^\frac{1}{20} + C_0\|w\|_{X^1(I_j)}^5\nonumber\\
            &\qquad + C_0 \sum_{k=1}^{4}\left(\|w\|_{X^1(I_j)}^{5-k}\|v\|_{Z'(I_j)}^k
                +\|v\|_{X^1(I_j)}\|w\|_{X^1(I_j)}^{5-k}\|v\|_{Z'(I_j)}^{k-1}\right)\nonumber\\
            &\qquad + C_0|J|^{1/5}\left(\|w\|_{X^1(I_j)}+\|v\|_{X^1(I_j)}\right)^3\nonumber\\
        &\leq  (2C)^{j-1}|J|^{\frac{1}{20}} + \frac{1}{20}(2C)^j|J|^\frac{1}{20}
                 +  C_0 \left[(2C)^j|J|^\frac{1}{20}\right]^5 \label{Lj_easyterms}\\
            &\qquad + C_0 \sum_{k=1}^{4}\left[(2C)^j|J|^\frac{1}{20}\right]^{5-k}\eta^k
                    + C_0 \|v\|_{X^1(I_j)}\sum_{k=1}^{4}\left[(2C)^j|J|^\frac{1}{20}\right]^{5-k}\eta^{k-1} \label{horrendous_mess}\\
            &\qquad + C_0 |J|^{1/5}\left(\|w\|_{X^1(I_j)} +\|v\|_{X^1(I_j)}\right)^3 \label{cubic_terms}.
\end{align}
The first two terms in \eqref{Lj_easyterms} come from \eqref{maxinduction}, \eqref{w_tj_induction},
and the estimate \eqref{delta_Veps_bound}.
The remaining terms arise by an application of the multilinear estimates in Proposition \ref{multilinearZ_estimates},
the embedding $Z'\hookrightarrow X^1$ and \eqref{maxinduction}.

We now make $\eta < \frac{1}{20C_0}$, which ensures that all terms in \eqref{horrendous_mess}
are small enough.
Provided that $(2C)^j|J|^{\frac{1}{20}} \leq 1$ (shrinking $J = J(m,\|v\|_{X^1(I_j)})$ if necessary),
we can make all the terms in \eqref{Lj_easyterms} and \eqref{horrendous_mess}
bounded by $\frac{1}{20}(2C)^j|J|^{\frac{1}{20}}$.
(Recall that $|J|\leq 1$ and $j \leq m =  m(\|v\|_{Z'(I)})$ and that $\|v\|_{Z'(I)}$ is controlled by
the initial data $\|u(a)\|_{H^1_x}$.
For the terms in \eqref{cubic_terms},
we again shrink $J$ if necessary,
recalling that $\|v\|_{X^1(I_j)}$ is controlled by $\|u(a)\|_{H^1_x(I)}$, independently of $J$.
So \[
    \max \{ \|L_j w\|_{L_t^\infty(I_j) H_x^1}, \|L_j w\|_{X^1(I_j)} \} \leq (2C)^j|J|^{\frac{1}{20}}.
\]
This proves $L_j$ maps $\mathcal{S}_j$ to itself.
A similar argument shows that $L_j$ is a contraction with respect to the $X^1(I_j)$ norm, and the smallness assumption that $|J|\leq 1$ can be removed as in the proof of \cite[Theorem 1.2]{LYYZ23}.
\end{proof}

For the remainder of this subsection, take $V_{2,N}$ of the form of \eqref{three_body_interaction},
so
\begin{equation*}
V_{2,N}(x-y,x-z) = \frac{1}{3}(\omega_N(x-y)\omega_N(x-z) + \omega_N(x-y)\omega_N(y-z) + \omega_N(x-z)\omega_N(y-z)).
\end{equation*}
Here we define $\omega_N$ in the following way. Let $\Psi\colon \R^3 \to \R$ be a positive continuous even function with compact support and $\int_{\R^3} \Psi(x) \, \dd x = 1$. For $N \in \N$, we define
\begin{equation*}
\omega_N(x) := N^3 \Psi \left(N[x]\right),
\end{equation*}
where $[x]$ denotes the unique element in $x + \Z^3 \cap \T^3$.
Then, the function $\omega_N$ lies in $L^\infty$ and is even.
Moreover, one has that $\omega_N \to \delta$ with respect to continuous functions. We note the following properties of $\omega_N$:
\begin{equation}
\label{weps_properties_bounded}
|\hat{\omega}_N(k) - 1| < C,
\end{equation}
for any $k \in \Z$ and for any $N \in \N$; and moreover
\begin{equation}
\label{w_eps_properties_convergence}
|\hat{\omega}_N(k) - 1| \to 0
\end{equation}
as $N \to \infty$ for all $k \in \Z$. We make the following remark about the application of Theorem \ref{GWP_mixed_nonlinearity_close_quintic}.
\begin{remark}
\label{H1_estimate_remark}
It is not immediately clear when \eqref{H1_estimate} is satisfied. We record the following special cases.
\begin{enumerate}
\item \label{epscial_case_1} Suppose that, in the statement of Theorem \ref{GWP_mixed_nonlinearity_close_quintic}, one takes $\omega' = \omega_N$. Then it follows from Lemma \ref{energy_bound_lemma} that \eqref{H1_estimate} is true.
\item When $\omega' \geq 0$, \eqref{H1_estimate} is trivially satisfied.
\item \label{special_case_2} Suppose that we restrict our initial data in the statement of Theorem \ref{GWP_mixed_nonlinearity_close_quintic} to a ball of finite radius around zero in $H^1$.
Then, there exists an $N' > 0$ such that for any $N > N'$, we have \eqref{H1_estimate}.
This is proved in Lemma \ref{ball_H^1_energy_bound_lemma} below.
\end{enumerate}
\end{remark}
\begin{lemma}
\label{ball_H^1_energy_bound_lemma}
Consider the ball $B_C(0) := \{u \in H^1(\mathbf{X}) : \|u\|_{H^1} < C \}$. Then there is some $N' > 0$ such that for all $N > N'$, \eqref{H1_estimate} is true for $E(u) \equiv E_N(u)$.
\end{lemma}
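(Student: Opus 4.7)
My plan is to adapt the completing-the-square argument of Lemma \ref{energy_bound_lemma}, compensating for the fact that the quintic interaction $V_{2,N}$ is no longer matched with the cubic interaction $\omega'$. The key additional input will be that, for $N$ large, the nonlocal quintic term $\tfrac{1}{3}\int(\omega_N*|u|^2)^2|u|^2$ approximates the local $\tfrac{1}{3}\|u\|_{L^6}^6$ well enough to absorb the cubic contribution arising from Young's inequality.

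First I would expand
\[
    \|\nabla u\|_{L^2}^2 = E_N(u) - \frac{\lambda}{2}\int\omega'(x-y)|u(x)|^2|u(y)|^2\,\dd x\,\dd y - \frac{1}{3}\int(\omega_N*|u|^2)^2|u|^2\,\dd x,
\]
observing that the last term is non-negative because $\omega_N\geq 0$. By Young's inequality, the interpolation $\|u\|_{L^4}^4\leq\|u\|_{L^2}\|u\|_{L^6}^3$, and AM--GM with a parameter $\delta>0$, the cubic term is bounded by $C_\delta M(u)+\delta\|u\|_{L^6}^6$.

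The crux is then to show that, for any $\eta>0$, there exists $N'=N'(C,\eta)$ such that
\[
    \frac{1}{3}\int(\omega_N*|u|^2)^2|u|^2 \geq \frac{1}{3}\|u\|_{L^6}^6 - \eta
    \quad\text{for all } u\in B_C(0) \text{ and } N>N'.
\]
Setting $g_N:=\omega_N*|u|^2-|u|^2$ and factoring $(\omega_N*|u|^2)^2-|u|^4 = g_N(\omega_N*|u|^2+|u|^2)$, the error reduces to $\bigl|\int g_N\,(\omega_N*|u|^2+|u|^2)\,|u|^2\bigr|$. I would first observe that $\{|u|^2:u\in B_C(0)\}$ is uniformly bounded in $H^{1/2}(\T^3)$ via the Sobolev product embedding $H^1(\T^3)\cdot H^1(\T^3)\hookrightarrow H^{1/2}(\T^3)$. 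Combined with the Fourier identity $\widehat{\omega_N*f-f}(k)=(\hat\Psi(k/N)-1)\hat f(k)$ and a low/high-frequency split, this yields $\|g_N\|_{L^2}\to 0$ uniformly on $B_C(0)$. Interpolating with the uniform bound $\|g_N\|_{L^3}\leq 2\|u\|_{L^6}^2$ then gives $\|g_N\|_{L^p}\to 0$ uniformly for every $p<3$.

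Choosing $\delta<1/3$ so that the $\delta\|u\|_{L^6}^6$ contribution from the cubic is dominated by the quintic lower bound, I would conclude $\|\nabla u\|_{L^2}^2\leq E_N(u)+C_\delta M(u)+\eta$, which after taking $\eta$ small and $N$ large gives $\|\nabla u\|_{L^2}^2\lesssim M(u)+E_N(u)$ on $B_C(0)$. The main obstacle is the endpoint in the H\"older pairing $|\int g_N h|\lesssim\|g_N\|_{L^3}\|h\|_{L^{3/2}}$: since $\{|u|^2\}$ is only precompact in $L^p$ for $p<3$, one must either work slightly subcritically by replacing a factor $\|u\|_{L^6}$ in the estimate for $h$ by $\|u\|_{L^q}$ with $q<6$ (using compactness of $H^1\hookrightarrow L^q$ to gain uniformity), or exploit the $H^{1/2}$ regularity of $|u|^2$ through a Sobolev-duality argument to upgrade the available $L^p$ convergence.
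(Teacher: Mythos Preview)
Your route diverges from the paper's at the key step of controlling $g_N := \omega_N*|u|^2 - |u|^2$. The paper bounds $g_N$ in $L^\infty$: writing $G_N(x)=\sum_k(\hat\omega_N(k)-1)\widehat{|u|^2}(k)e^{ikx}$ and using $\hat\omega_N(k)\to 1$ pointwise with a uniform bound, dominated convergence (for smooth $u$, then a density argument for the final inequality) gives $\|g_N\|_{L^\infty}\leq 1/4$ for $N$ large. The error in the quintic approximation is then bounded by $\|g_N\|_{L^\infty}\int(\omega_N*|u|^2+|u|^2)|u|^2 \lesssim \|u\|_{L^4}^4$. This error \emph{scales with $u$}, so after the interpolation $\|u\|_{L^4}^4\leq\|u\|_{L^2}\|u\|_{L^6}^3$ and Young's inequality it is absorbed into $M(u)$ and the local $\tfrac{1}{3}\|u\|_{L^6}^6$, yielding the genuinely homogeneous estimate $\|\nabla u\|_{L^2}^2 \lesssim M(u)+E_N(u)$.

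Your $L^p$-based approach produces only an additive error $\eta$, and $\|\nabla u\|^2 \leq E_N(u) + C_\delta M(u) + \eta$ does not imply \eqref{H1_estimate}: a fixed $\eta>0$ cannot be absorbed into $M(u)+E_N(u)$ as $u\to 0$. More importantly, the endpoint obstruction you flag is real and your proposed fixes do not close it. The product estimate $H^1(\T^3)\cdot H^1(\T^3)\hookrightarrow H^{1/2}(\T^3)$ is precisely the false endpoint --- one only has $H^s$ for $s<1/2$ --- so there is no path to $\|g_N\|_{L^3}\to 0$ that way. And replacing a factor $\|u\|_{L^6}$ by $\|u\|_{L^q}$ with $q<6$ still leaves $\|h\|_{L^{p'}}$ requiring $u\in L^{4p'}$ with $p'>3/2$, hence $u\in L^r$ for some $r>6$, which $H^1(\T^3)$ does not provide; compactness of $H^1\hookrightarrow L^q$ is of no help since $B_C(0)$ is not compact in $H^1$. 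Working in $L^\infty$ on $g_N$ as the paper does avoids the endpoint entirely and simultaneously produces an error of the correct homogeneity.
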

\begin{proof}
Let $u \in B_C(0)$. By considering a density argument, we assume without loss of generality that $u$ and $\omega_N$ are smooth. Consider
\begin{equation*}
G_N(x) := \left|\omega_N * |u|^2 - |u|^2\right|(x) = \big|\sum_{{\substack{ k \in \Z \\ k_1 +k_2 =k} }} (\hat{\omega}_N(k) - 1) \hat{u}(k_1) \hat{\bar{u}}(k_2)\e^{\ii k x}\big|.
\end{equation*}
By \eqref{weps_properties_bounded}, it follows that $|G_N(x)| \leq C \|u\|_{L^2}^2 \lesssim \|u\|_{H^1}^2$.
Applying the dominated convergence theorem and \eqref{w_eps_properties_convergence}, and taking $N$ sufficiently large, for some $\eta > 0$ sufficiently small, we have
\begin{equation}
\label{Geps_convergence}
|G_N(x)| \lesssim \eta \|u\|_{H^1}^2 \leq \frac{1}{4}.
\end{equation}
It follows from \eqref{Geps_convergence}, H\"older's inequality, and \eqref{energy_finite_remark}
(as well as $\|\omega_N\|_{L^1} = \int \omega_N(x-y) \, \dd y = 1$) that for $N$ sufficiently large, we have
\begin{equation*}
\left|\frac{1}{3}\int (\omega_N * |u|^2)^2 |u(x)|^2 - |u(x)|^6 \, \dd x  \right| \leq \frac{1}{2}\|u\|_{L^4}^4.
\end{equation*}
Arguing as in the proof of Lemma \ref{energy_bound_lemma}, we obtain \eqref{H1_estimate}.
\end{proof}

\subsection{Convergence of the mixed Hartree equation to the mixed NLS}
We now prove Corollary \ref{mixed_convergence_theorem}.

\begin{proof}[Proof of Corollary \ref{mixed_convergence_theorem}]
The proof is similar to the proof of Theorem \ref{Convergence_theorem}. Now, in \eqref{long_limit} we will have three extra terms, corresponding to the lower order nonlinearity. These three extra terms are treated analogously to the higher order terms, except that we use Proposition \ref{mixed_multlinear_proposition} instead of Proposition \ref{multilinear_lemma}. We make the argument on a small interval $I$, so that $|I|^c < 1$. We then make an inductive argument similar to the proofs of \cite[Lemmas 5.4 and 5.7]{RS23} to build up all the way to $T'$.
\end{proof}

\appendix 
\section{Specific interaction potentials}
\label{appendix_hamiltonian}
In this appendix, we comment on particular forms of the interaction potential $V$, which can be useful when exploiting the Hamiltonian structure of \eqref{p_Hartree}.
\subsection{Quintic nonlinearity}
We first discuss the case of the quintic nonlinearity. In this case, the quintic Hartree equation arises as the mean-field limit of the many-body Schr\"odinger equation with Hamiltonian given by
\begin{equation*}
H_N = \sum_{j=1}^N -\Delta_j + \frac{\mu}{N^2}\sum_{i<j<k}^N V(x_i-x_j,x_i-x_k),
\end{equation*}
see for example \cite{CH19,NS20,Xie15}. Here $\mu = \pm 1$. Let $\omega \colon \mathbf{X} \to \R$ be an even, real-valued interaction potential. Then two natural choices of $V$ are
\begin{equation}
\label{pairwise_sum}
V_1:= \frac{1}{3}\left(\omega(x_i-x_j)\omega(x_i-x_k) + \omega(x_i - x_j)\omega(x_j - x_k) + \omega(x_i-x_k) \omega(x_j - x_k)\right),
\end{equation}
or
\begin{equation}
    \label{triple_product}
V_2:= \omega(x_i-x_j)\omega(x_j-x_k)\omega(x_k-x_i).
\end{equation}
Interaction potentials of the form $V_1$ were considered in for example \cite{Lee20,NS20}, and of the form $V_2$ in \cite{RS23}. We direct the reader to \cite[Section 1 and Appendix A]{RS23} for the Hamiltonian structure of \eqref{p_Hartree} corresponding to the choices $V_1$ and $V_2$. In the case of $V_2$, the energy is given by
\begin{equation}
\label{3_cluster_energy}
E(u) = \int |\nabla u (x)|^2 \, \dd x + \frac{1}{3}\int \omega(x-y)\omega(y-z)\omega(z-x) |u(x)|^2 |u(y)|^2 |u(z)|^2 \, \dd x \, \dd y \, \dd z.
\end{equation}
The finiteness of the energy in this case for $u \in H^1(\mathbf{X})$ and $\omega$ even and in $L^{\frac{3}{2}}$ can be proved similarly to \cite[Lemma A.9]{RS23}. The only difference to the proof of \cite[Lemma A.9]{RS23} is we embed into $H^1$ when using the Sobolev embedding theorem, since we are working in three dimensions instead of one.

We now consider the problem of convergence for these particular choices of interaction potential. Fix
\begin{equation}
    \label{w_N}
\omega^*_n(x) := n^3\chi_{\left[-\frac{1}{2n},\frac{1}{2n}\right]^3}(x),
\end{equation}
where $\chi_A$ denotes the indicator function on a set $A$. We note that $\omega^*_n$ converges to the delta distribution with respect to continuous functions as $n \to \infty$. For $\eqref{pairwise_sum}$, we set $\omega_n := \omega^*_n$, and by direct computation, we have
\begin{equation*}
\|V_{1,n}\|_{L^1} = 1, \quad V_{1,n} \to \delta,
\end{equation*}
Here we write $V_{i,n}$ for $V_i$ defined using $\omega_n$.
Then we are able to apply Theorem \ref{Convergence_theorem} for the sequence $V_{1,n}$.

In the case of \eqref{triple_product}, we note that we cannot take $\omega_n = c \omega^*_n$ for any constant $c>0$. Indeed, if we do this, a simple computation shows
\begin{equation*}
\|V_{2,n}\|_{L^1} \gtrsim n^3 \to \infty.
\end{equation*}
Instead, we take $\omega_n := \left(\omega^*_n\right)^{\frac{2}{3}}$. In this case, a computation shows that
\begin{equation*}
\|V_{2,n}\|_{L^1} = \frac{1}{512}.
\end{equation*}
Rescaling $V_{2,n}$ appropriately, one can show $V_{2,n} \to \delta$ and $\|V_{2,n}\|_{L^1} = 1$, so we can apply Theorem \ref{Convergence_theorem}.

\subsection{Higher order nonlinearities}
In the case of a higher-order nonlinearity, one can again split into two different cases. 
\subsubsection*{Generalising $V_1$}
The case of $V_1$ is generalised by taking the sums of products of $p$--two body interactions as follows:
\begin{equation}
\label{generalised_three_body_interaction}
V_{1,p}(x-x_1,\ldots,x-x_p) := \frac{1}{p+1}\sum_{i=0}^p \prod_{\substack{j=0 \\ j \neq i}}^p \omega(x_i-x_j),
\end{equation}
where again we write $x \equiv x_0$. We interpret this as every particle interacting with each other particle by means of a two-particle interaction potential. Recall we take $\mu \in \{\pm 1\}$. The mean-field NLS one expects to be associated to
\begin{equation*}
H_N = \sum_{j=1}^N -\Delta_j + \frac{\mu}{N^p}\sum^N_{1 \leq i_0 < i_1 < \ldots < i_{p} \leq N} V_{1,p}(x_{i_0}-x_{i_1},\ldots,x_{i_0} - x_{i_p})
\end{equation*}
is
\begin{multline}
\label{Hamiltonian_equation_V1p}
\ii \partial_t u + \Delta u = \frac{\mu}{p+1} \left[ \int_{\mathbf{X}^p} \left( \sum_{i=0}^p\prod_{\substack{j = 0 \\ j \neq i}}^p \omega(x_i-x_j)  \right) |u(x_1)|^2\ldots|u(x_p)|^2\,\dd x_1 \ldots \dd x_p \right]u(x) \\
= \frac{\mu}{p+1}\left[ \int_{\mathbf{X}^p} \left(\prod_{\substack{j = 1}}^p \omega(x-x_j)\right) |u(x_1)|^2\ldots|u(x_p)|^2\,\dd x_1 \ldots \dd x_p \right]u(x) \\
+ \frac{\mu \, p}{p+1} \left[ \int_{\mathbf{X}^p} \left( \prod_{\substack{j =0}}^{p-1} \omega(x_j-x_p) \right) |u(x_1)|^2\ldots|u(x_p)|^2\,\dd x_1 \ldots \dd x_p \right]u(x).
\end{multline}
Here we used the evenness of the interaction potential $\omega$. The corresponding energy is given by
\begin{equation}
\label{V1p_classic_Hamiltonian}
E(u) := \int_\mathbf{X} |\nabla u(x)|^2 \dd x
+ \frac{\mu}{p+1} \int_{\mathbf{X}}(\omega * |u|^2)^{p}|u(x)|^2 \, \dd x. 
\end{equation}
We have the following bound on the energy.
\begin{lemma}
Let $p \geq 3$. Suppose $\omega \colon \mathbf{X} \to \R$ is an even $L^1$ function and $u \in H^{s_c}$ for $s_c$ as in \eqref{critical_sobolev}. Then the Hamiltonian defined in \eqref{V1p_classic_Hamiltonian} is finite.
\end{lemma}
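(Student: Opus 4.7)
The plan is to split the energy into its kinetic and potential parts and bound each separately, mirroring the strategy of Lemma \ref{energy_finite_lemma} but using Sobolev embedding at the scaling-critical level.

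First, I would handle the kinetic term trivially: since $p \geq 3$ gives $s_c = 3/2 - 1/p \geq 7/6 > 1$, we have $H^{s_c}(\mathbf{X}) \hookrightarrow H^1(\mathbf{X})$, so
\[
    \int_{\mathbf{X}} |\nabla u(x)|^2 \,\dd x \leq \|u\|_{H^1}^2 \leq \|u\|_{H^{s_c}}^2 < \infty.
\]

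The substantial part is estimating the interaction integral
\[
    I := \int_{\mathbf{X}} (\omega * |u|^2)^p \, |u(x)|^2 \, \dd x.
\]
I would apply H\"older's inequality with exponents $3/2$ and $3$ to separate the convolution factor from the remaining $|u|^2$:
\[
    |I| \leq \|(\omega*|u|^2)^p\|_{L^{3/2}} \|u^2\|_{L^3}
        = \|\omega*|u|^2\|_{L^{3p/2}}^p \|u\|_{L^6}^2.
\]
Next, Young's inequality (recall $\omega \in L^1$) yields
\[
    \|\omega*|u|^2\|_{L^{3p/2}} \leq \|\omega\|_{L^1} \||u|^2\|_{L^{3p/2}} = \|\omega\|_{L^1} \|u\|_{L^{3p}}^2,
\]
so that $|I| \leq \|\omega\|_{L^1}^p \|u\|_{L^{3p}}^{2p} \|u\|_{L^6}^2$.

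Finally, the choice of H\"older exponents is dictated by the critical Sobolev embedding in dimension three: since $\frac{1}{3p} = \frac{1}{2} - \frac{s_c}{3}$, we have $H^{s_c}(\mathbf{X}) \hookrightarrow L^{3p}(\mathbf{X})$, and $H^1(\mathbf{X}) \hookrightarrow L^6(\mathbf{X})$ gives the $L^6$ control via $H^{s_c} \hookrightarrow H^1$. Combining these embeddings with the estimates above produces
\[
    |E(u)| \lesssim \|u\|_{H^{s_c}}^2 + \|\omega\|_{L^1}^p \|u\|_{H^{s_c}}^{2p+2} < \infty,
\]
which proves the lemma. There is no serious obstacle here: the only thing to verify carefully is that the H\"older pair $(3/2, 3)$ aligns exactly with the scaling-critical Sobolev exponent $3p$, which is precisely the content of the definition \eqref{critical_sobolev} of $s_c$, and that $s_c \geq 1$ holds throughout the range $p \geq 3$ so the kinetic term and the $L^6$ factor are controlled by the same $H^{s_c}$ norm.
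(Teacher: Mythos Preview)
Your proof is correct and follows the same overall strategy as the paper --- H\"older, then Young, then Sobolev embedding --- but with a different choice of H\"older exponents. The paper places all $2p+2$ copies of $u$ in the same Lebesgue space $L^{2p+2}$, obtaining $|I|\leq \|\omega\|_{L^1}^p\|u\|_{L^{2p+2}}^{2p+2}$ and then invoking the \emph{subcritical} embedding $H^{s_c}\hookrightarrow L^{2p+2}$ (valid because $s_c(p)\geq \tfrac{3}{2}(1-\tfrac{1}{p+1})$ for $p\geq 2$). You instead split off the undisplaced factor $|u|^2$ into $L^3$ and push the convolved factors into $L^{3p/2}$, landing exactly on the \emph{critical} exponent $L^{3p}$ for $2p$ of the copies and $L^6$ for the remaining two. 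Your route is slightly sharper in that it hits the endpoint embedding, while the paper's is marginally cleaner in using a single Lebesgue exponent throughout; neither offers a real advantage over the other here, and both close without difficulty.
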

\begin{proof}
The first term in \eqref{V1p_classic_Hamiltonian} is bounded as in the proof Lemma \ref{energy_finite_lemma}. The second term is given by
\begin{equation*}
\frac{1}{p+1} \int (\omega * |u|^2)^{p}|u(x)|^2 \, \dd x.
\end{equation*}
This is bounded by applying H\"older's and Young's inequality, followed by the Sobolev embedding theorem. Here we use that for $p \geq 2$,
\begin{equation*}
s_c(p) \geq \frac{3}{2} \left(1 - \frac{1}{p+1}\right),
\end{equation*}
which implies $\|u\|_{L^{2p+2}} \lesssim \|u\|_{H^{s_c}}$
\end{proof}

We can also write down the Hamiltonian structure for \eqref{Hamiltonian_equation_V2p}. Namely, we define a Poisson structure on the space of fields $u \colon \mathbf{X}\to \C$ by the following relation:
\begin{equation}
\label{Poisson_structure}
\{u(x),\overline{u}(y)\} = \ii \delta(x-y), \quad \{u(x),u(y)\} = \{\overline{u}(x),\overline{u}(y)\} = 0.
\end{equation}
We have the following lemma.
\begin{lemma}
\label{Hamiltonian_structure_V1p_lemma}
With the Poisson structure given in \eqref{Poisson_structure}, the Hartree equation given in \eqref{Hamiltonian_equation_V1p} is the Hamiltonian equation of motion associated with the Hamiltonian given in \eqref{V1p_classic_Hamiltonian}.
\end{lemma}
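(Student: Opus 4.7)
The plan is to verify by direct computation that Hamilton's equation of motion $\partial_t u = \{u, E\}$ with the Poisson structure \eqref{Poisson_structure} coincides with the Hartree equation \eqref{Hamiltonian_equation_V1p}. Since $\{u(x), u(y)\} = 0$, the general formula collapses to
\[
    \partial_t u(x) = \int_{\mathbf{X}} \{u(x), \overline{u}(y)\} \frac{\delta E}{\delta \overline{u}(y)} \, \dd y = \ii \frac{\delta E}{\delta \overline{u}(x)},
\]
so the task reduces (up to the sign convention relating the Poisson bracket to the evolution) to identifying $\frac{\delta E}{\delta \overline{u}(x)}$ with $-\Delta u(x)$ plus the right-hand side of \eqref{Hamiltonian_equation_V1p}.

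First, for the kinetic part, an integration by parts gives $\frac{\delta}{\delta \overline{u}(x)} \int |\nabla u|^2 \, \dd y = -\Delta u(x)$. Next, setting $\rho := |u|^2$, the interaction energy reads $E_{\text{int}} = \frac{\mu}{p+1}\int (\omega*\rho)^p(y) \rho(y) \, \dd y$. Using $\frac{\delta \rho(y)}{\delta \overline{u}(x)} = u(y)\delta(y-x)$, the product rule produces two contributions: differentiating the outermost $\rho(y)$ at $y=x$ yields $\frac{\mu}{p+1}(\omega*\rho)^p(x) u(x)$; differentiating the $p$ identical factors $(\omega*\rho)(y)$ and transferring the convolution via the evenness of $\omega$ yields the additional term $\frac{\mu p}{p+1}\bigl(\omega*[(\omega*\rho)^{p-1}\rho]\bigr)(x) u(x)$.

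Finally, these two convolution expressions match the two terms in the explicit decomposition of the nonlinearity in \eqref{Hamiltonian_equation_V1p}. Indeed, $\int_{\mathbf{X}^p} \prod_{j=1}^p \omega(x-x_j) \rho(x_1)\cdots\rho(x_p) \, \dd x_1 \cdots \dd x_p = (\omega*\rho)^p(x)$; and using the symmetry of the integrand under permutation of $x_1, \ldots, x_p$ together with evenness of $\omega$, each of the remaining $p$ terms in $\sum_{i=0}^p \prod_{j\neq i} \omega(x_i-x_j)$ integrates to $\bigl(\omega*[(\omega*\rho)^{p-1}\rho]\bigr)(x)$. Summing these reproduces the factor of $p$ in \eqref{Hamiltonian_equation_V1p}.

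The main obstacle is purely combinatorial bookkeeping: matching the $p+1$ summands in $\sum_i \prod_{j\neq i}$ with the two convolution structures that arise from the functional derivative, and ensuring that the combinatorial prefactor $p$ produced by the product rule agrees with the prefactor in the second term of \eqref{Hamiltonian_equation_V1p}. No analytic difficulty enters; all integrations are justified for $u \in H^{s_c}$ by the energy finiteness established earlier in the section, and the evenness and symmetry hypotheses on $\omega$ and $V_{1,p}$ are used throughout to transfer convolutions freely between arguments.
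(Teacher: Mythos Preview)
Your proposal is correct and follows exactly the approach indicated in the paper, namely a direct computation of the Poisson bracket (equivalently, the functional derivative $\frac{\delta E}{\delta \overline{u}}$) using \eqref{Poisson_structure}. In fact your write-up is more explicit than the paper's, which merely records that the result follows by direct computation and points to \cite[Lemma~1.3]{RS23} for an analogous calculation.
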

\begin{proof}
Proved by direct computation of $\{H,u\}(x)$ using \eqref{Poisson_structure}. We direct the reader to the proof of \cite[Lemma 1.3]{RS23} for a similar computation.
\end{proof}

Up to a normalisation constant, one can apply the result of Theorem \ref{Convergence_theorem} by directly setting $\omega_n := \omega^*_n$.

\subsubsection*{Generalising $V_2$}

To generalise $V_2$ we consider
\begin{equation}
\label{p_product}
V_{2,p} = \prod_{i \sim j} \omega(x_i-x_j),
\end{equation}
where our product is taken over all pairings of $\{0,1,\ldots,p\}$ with $i < j$, where we write $x \equiv x_0$. The mean-field NLS one expects to be associated to 
\begin{equation*}
H_N = \sum_{j=1}^N -\Delta_j + \frac{\mu}{N^{p}} \sum^N_{1 \leq i_0 < i_1 < \ldots < i_{p} \leq N} V_{2,p}(x_{i_0}-x_{i_1},\ldots,x_{i_0} - x_{i_p})
\end{equation*}
is
\begin{equation}
\label{Hamiltonian_equation_V2p}
\ii \partial_t u + \Delta u = \mu \left[ \int_{\mathbf{X}^p} \prod_{i \sim j} \omega(x_i-x_j) |u(x_1)|^2\ldots|u(x_p)|^2\,\dd x_1 \ldots \dd x_p \right]u(x).
\end{equation}
Here the corresponding energy is given by
\begin{multline}
\label{V2p_classic_Hamiltonian}
E(u) := \int_X |\nabla u(x)|^2 \dd x \\ 
+ \frac{\mu}{p+1} \int_{\mathbf{X}^p} V_{2,p}(x-x_1,\ldots,x-x_p)|u(x_1)|^2 \ldots |u(x_p)|^2 |u(x)|^2 \, \dd x_1 \ldots \dd x_p \, \dd x. 
\end{multline}
We have the following bound on the energy.
\begin{lemma}
Suppose $p \geq 3$. Suppose that $\omega \colon \mathbf{X} \to \R$ is an even $L^{p+1}$ function and $u \in H^{s_c}$ for $s_c$ as in \eqref{critical_sobolev}. Then the Hamiltonian defined in \eqref{V2p_classic_Hamiltonian} is finite.
\end{lemma}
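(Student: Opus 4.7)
The plan is to bound the two contributions to $E(u)$ separately. The kinetic term $\int_{\mathbf{X}}|\nabla u(x)|^2\,\dd x$ is immediately controlled by $\|u\|_{H^{s_c}}^2$, since $s_c = \frac{3}{2}-\frac{1}{p}\geq 1$ whenever $p\geq 2$. The main work is to bound the nonlinear integral
\[
I := \int_{\mathbf{X}^{p+1}} \prod_{0\leq i<j\leq p}|\omega(x_i-x_j)|\,\prod_{i=0}^{p}|u(x_i)|^2\,\dd x_0\cdots\dd x_p,
\]
where I write $x_0 := x$ and bound $V_{2,p}$ pointwise by replacing each $\omega$ by $|\omega|$.

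The key idea is a symmetric graph-theoretic factorisation. View the $\binom{p+1}{2}$ factors in $V_{2,p}$ as the edges of the complete graph $K_{p+1}$ on vertex set $\{0,1,\ldots,p\}$, and split each edge weight as $|\omega(x_i-x_j)| = |\omega(x_i-x_j)|^{1/2}\cdot|\omega(x_i-x_j)|^{1/2}$, assigning one half to each endpoint. Each vertex $i$ then accumulates the product
\[
A_i(x_0,\ldots,x_p) := \prod_{j\neq i}|\omega(x_i-x_j)|^{1/2}
\]
of the $p$ edge-halves incident to it, so $\prod_{0\leq i<j\leq p}|\omega(x_i-x_j)| = \prod_{i=0}^{p}A_i$. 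I would then apply H\"older's inequality on $\mathbf{X}^{p+1}$ to the $p+1$ factors $A_i\cdot|u(x_i)|^2$, each placed in $L^{p+1}$, to get
\[
I\leq \prod_{i=0}^{p}\bigl\|A_i\cdot|u(x_i)|^2\bigr\|_{L^{p+1}(\mathbf{X}^{p+1})}.
\]

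For each factor, translation invariance of the Lebesgue integral gives $\int_{\mathbf{X}}|\omega(x_i-x_j)|^{(p+1)/2}\,\dd x_j = \|\omega\|_{L^{(p+1)/2}}^{(p+1)/2}$ independently of $x_i$, so integrating out the $p$ variables $x_j$ for $j\neq i$ shows that the inner norm equals $\|u\|_{L^{2p+2}}^{2}\,\|\omega\|_{L^{(p+1)/2}}^{p/2}$. Combining the $p+1$ factors produces
\[
I \lesssim \|u\|_{L^{2p+2}}^{2p+2}\,\|\omega\|_{L^{(p+1)/2}}^{p(p+1)/2}.
\]
Since $\mathbf{X}=\T^3$ has finite measure, the hypothesis $\omega\in L^{p+1}$ implies $\omega\in L^{(p+1)/2}$, and the critical Sobolev embedding $H^{s_c}(\T^3)\hookrightarrow L^{2p+2}(\T^3)$ holds because $2p+2\leq \tfrac{6}{3-2s_c}=3p$ for $p\geq 2$, closing the estimate.

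The only delicate point is recognising the right factorisation: the symmetric half-edge split matches both the number of integration variables ($p+1$) and the vertex-edge incidence structure of $K_{p+1}$, producing the natural H\"older exponent $p+1$ on each factor and decoupling all the $\omega$-integrals via translation invariance. Once this observation is made, the remainder is a routine application of Sobolev embedding and the inclusion of Lebesgue spaces on a finite measure space.
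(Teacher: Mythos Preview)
Your argument is correct, and it is genuinely different from the paper's. The paper integrates out the variables $x_0,x_1,\ldots,x_p$ one at a time: at the $j$-th step there remain $p-j$ factors of $\omega$ containing the current variable together with $|u(x_j)|^2$, so H\"older in that single variable produces $\|\omega\|_{L^{p+1-j}}^{\,p-j}\|u\|_{L^{2(p+1-j)}}^{2}$, and the iteration yields the product $\prod_{j=0}^{p}\|\omega\|_{L^{p+1-j}}^{\,p-j}\|u\|_{L^{2(p+1-j)}}^{2}$. Your symmetric half-edge splitting followed by a single H\"older on $\mathbf{X}^{p+1}$ replaces this iteration entirely, giving the cleaner bound $\|\omega\|_{L^{(p+1)/2}}^{p(p+1)/2}\|u\|_{L^{2p+2}}^{2p+2}$.

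Besides being tidier, your route is strictly less demanding on $\omega$: you only use $\omega\in L^{(p+1)/2}$, whereas the paper's first H\"older step (in $x_0$) genuinely needs $\omega\in L^{p+1}$. On the torus your requirement follows from the stated hypothesis by inclusion of Lebesgue spaces, so both close under the lemma's assumptions; but note that the paper itself remarks that $L^{p+1}$ is likely not sharp, and your argument supplies a concrete improvement. On the $u$ side both proofs land on the same requirement $u\in L^{2p+2}$, handled by the critical Sobolev embedding you check.
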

\begin{proof}
We only bound the second term in \eqref{V2p_classic_Hamiltonian}, which we denote $E_p(u)$. We apply H\"older's inequality in the $x_0 \equiv x$ variable. There are $p$ $\omega$'s containing an $x$, and the $|u(x)|^2$. So we obtain a $\|\omega\|_{L^{p+1}}^p$ and a $\|u\|_{L^{2(p+1)}}^2$. We now apply H\"older's inequality in the $x_1$ variable. There are $p-1$ remaining $\omega$'s containing an $x_1$ and the $|u(x)|^2$. We repeat this for all of the $x_i$ variables. We thus get the following bound.
\begin{equation*}
|E_p(u)| \leq \prod_{j=0}^p \|\omega\|_{L^{p+1-j}}^{p-j} \|u\|_{L^{2(p+1-j)}}^2.
\end{equation*}
The assumption on $\omega$ and the Sobolev embedding theorem imply that this is finite.
\end{proof}
\begin{remark}
We remark that $\omega \in L^{p+1}$ is sufficient for the energy to be finite, but is probably not necessary.
Indeed, in the case $p=2$, one can do better; see the discussion after \eqref{3_cluster_energy}.
We do not comment further.
\end{remark}
We can also write down the Hamiltonian structure for \eqref{Hamiltonian_equation_V2p}. 

\begin{lemma}
\label{Hamiltonian_structure_V2p_lemma}
With the Poisson structure given in \eqref{Poisson_structure}, the Hartree equation given in \eqref{Hamiltonian_equation_V2p} is the Hamiltonian equation of motion associated with the Hamiltonian given in \eqref{V2p_classic_Hamiltonian}.
\end{lemma}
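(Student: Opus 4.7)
The plan is to proceed by a direct calculation that mirrors the argument for \eqref{Hamiltonian_equation_V1p} in Lemma \ref{Hamiltonian_structure_V1p_lemma} (and for the cubic and quintic cases in \cite[Lemma 1.3]{RS23}). The Poisson structure \eqref{Poisson_structure} implies that, for any sufficiently regular functional $F$, one has
\[
    \{u(x),F\} = \ii\,\frac{\delta F}{\delta \overline{u}(x)},
\]
so the Hamiltonian equation of motion reads $\partial_t u = \{u,E\}$, i.e.\ $\ii\partial_t u = -\delta E/\delta\overline{u}$. It therefore suffices to compute the functional derivative of $E$ as given in \eqref{V2p_classic_Hamiltonian} and check that it reproduces the nonlinearity on the right-hand side of \eqref{Hamiltonian_equation_V2p}.

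For the kinetic part $\int |\nabla u|^2 = -\int \overline{u}\,\Delta u$ we obtain, by integration by parts, $\delta/\delta\overline{u}(x)\int|\nabla u|^2 = -\Delta u(x)$, which gives the desired $+\Delta u$ once the sign from $\ii\partial_t u = -\delta E/\delta\overline{u}$ is applied. For the interaction part, I would write
\[
    E_{\mathrm{int}}(u) = \frac{\mu}{p+1}\int_{\mathbf{X}^{p+1}} V_{2,p}(x-x_1,\dots,x-x_p)\,|u(x)|^2\prod_{j=1}^p |u(x_j)|^2 \,\dd x\,\dd x_1\cdots\dd x_p,
\]
and differentiate with respect to $\overline{u}(y)$. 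The derivative produces $p+1$ terms, one for each factor $|u(\cdot)|^2$ that is hit. Using the symmetry of $V_{2,p}(x-x_1,\dots,x-x_p)$ under any permutation of $\{x,x_1,\dots,x_p\}$ (which follows from the evenness of $\omega$ and the product structure \eqref{p_product}), together with the symmetry of the measure $\dd x\,\dd x_1\cdots\dd x_p$, all $p+1$ terms are equal after relabeling. Hence the prefactor $1/(p+1)$ is cancelled, and one obtains
\[
    \frac{\delta E_{\mathrm{int}}}{\delta\overline{u}(x)} = \mu\left(\int_{\mathbf{X}^p} V_{2,p}(x-x_1,\dots,x-x_p)\prod_{j=1}^p |u(x_j)|^2 \,\dd x_1\cdots\dd x_p\right) u(x).
\]
Combining with the kinetic contribution yields exactly \eqref{Hamiltonian_equation_V2p}.

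No serious obstacle is expected; the only thing that requires any care is verifying that each of the $p+1$ functional derivatives of the interaction potential produces the same integrand, which is purely a symmetry/bookkeeping check on $V_{2,p}$ and the symmetric measure. The argument is formal in the sense that it manipulates the distributional bracket \eqref{Poisson_structure} directly, but this is the standard convention in this context (as used throughout \cite{NS20,RS23}), and finiteness of $E$ on $H^{s_c}$ has already been established in the preceding lemma, so all manipulations can be justified for smooth test functionals and extended by density.
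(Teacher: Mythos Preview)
Your approach is exactly what the paper does: the paper's proof is literally ``See the proof of Lemma \ref{Hamiltonian_structure_V1p_lemma}'', which in turn says ``Proved by direct computation of $\{H,u\}(x)$ using \eqref{Poisson_structure}'' and refers to \cite[Lemma 1.3]{RS23}. Your write-up is a fleshed-out version of precisely that computation, and the key symmetry observation about $V_{2,p}$ cancelling the $1/(p+1)$ is correct.

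One bookkeeping slip: with the bracket \eqref{Poisson_structure} the Hamiltonian flow is $\partial_t u = \{E,u\}$ (as the paper writes, $\{H,u\}$), not $\{u,E\}$, so the correct relation is $\ii\partial_t u = +\,\delta E/\delta\overline{u}$; your sign convention would give $\ii\partial_t u - \Delta u = -\mu(\cdots)$ rather than \eqref{Hamiltonian_equation_V2p}. This is purely a sign in the convention and does not affect the substance of the argument.
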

\begin{proof}
See the proof of Lemma \ref{Hamiltonian_structure_V1p_lemma}.
\end{proof}

To apply the result of Theorem \ref{Convergence_theorem}, one notices that there are $\frac{p(p+1)}{2}$ terms in the product in \eqref{p_product}. So, arguing as in the previous case, we take $\omega^*_n \to \delta$, and set $\omega_n := (\omega^*_n)^{\frac{2}{p+1}}$.
Here, the exponent comes from $\frac{p}{\frac{p(p+1)}{2}}$ -- i.e. we want the product of $p$ many $\delta$'s, and we have $\frac{p(p+1)}{2}$ interaction potentials in the products. Adding an appropriate normalisation constant, which will depend on the choice of $p$, one can conclude that $V_{2,p,n} \to \delta_p$, and so one can apply the results of Theorem \ref{Convergence_theorem}.

\subsection*{Acknowledgements} This research has been funded by the ANR-DFG project (ANR-22-CE92-0013, DFG PE 3245/3-1 and BA 1477/15-1). A.R. thanks Siegfried Spruck for a helpful discussion concerning the Appendix. The authors thank Zied Ammari for reading this manuscript and for his helpful comments.


\begin{thebibliography}{References}

\bibitem{ARR22} A.K.~Arora, O.~Riaño, S.~Roudenko, \emph{Well-posedness in weighted spaces for the generalized Hartree equation with $p<2$}, Commun.
Contemp. Math. 24 (2022), no. 09, 2150074.

\bibitem{AR20} A.K.~Arora, S.~Roudenko, \emph{Well-posedness and blow-up properties for the generalized Hartree equation},
Journal of Hyperbollic Differential Equations, Volume 17, No.~4 (2020).

\bibitem{BPS16} N.~Benedikter, M.~Porta, B.~Schlein, \emph{Effective Evolution Equations from Quantum Dynamics}, Springer Briefs in Mathematical Physics, Springer, Berlin (2016).

\bibitem{BS23} D.G.~Bhimani, S.~Haque, \emph{Strong ill-posedness for fractional Hartree and cubic NLS equations}, Journal of Functional Analysis, Volume 285, Issue 11 (2023).

\bibitem{Bou93} J.~Bourgain, \emph{Fourier transform restriction phenomena for certain lattice subsets and applications to nonlinear evolution equations. I. Schr\"odinger equations}. Geom. Funct. Anal., Volume 3 (1993), 107--156.

\bibitem{BD15} J.~Bourgain, C.~Demeter, \emph{The proof of the $l^2$ decoupling conjecture}. Ann. of Math. (2) 182, 1 (2015), 351--389.

\bibitem{CW90} T. Cazenave, F. Weissler,
\emph{The cauchy problem for the critical nonlinear Schrödinger equation in $H^s$},
Nonlinear Analysis: Theory, Methods \& Applications,
Volume 14, Issue 10, 807--836, (1990).

\bibitem{CBU24} T.~Chen, A. Bowles Urban, \emph{On the Well-Posedness and Stability of Cubic and Quintic Nonlinear Schr\"odinger Systems on 
$\T^3$}. Ann. Henri Poincar\'e 25, 1657--1692 (2024).

\bibitem{CP10} T.~Chen, N.~Pavlovi\'c, \emph{On the Cauchy problem for focusing and defocusing Gross-Pitaevskii hierarchies}, Discrete Contin. Dyn. Syst. 27 (2010), no. 2, 715–739.

\bibitem{CP11} T.~Chen, N.~Pavlovi\'c, \emph{The quintic NLS as the mean field limit of a boson gas with three-body interactions}, J. Funct. Anal., 260(4):959–-997, (2011).

\bibitem{CH19} X. Chen, J. Holmer, \emph{The derivation of the energy-critical NLS from quantum many-body dynamics}, Invent. math. (2019) {\bf 217}, 433–547.

\bibitem{CKSTT08} J. Colliander, M. Keel, G. Staffilani, H. Takaoka, T. Tao, \emph{Global well-posedness and scattering for the energy-critical nonlinear Schr\"odinger equation in $\mathbb{R}^3$}, Ann. of Math. (2) 167 (2008), no. 3, 767-865.

\bibitem{FKSS18} J. Fr\"ohlich, A. Knowles, B. Schlein, V. Sohinger, \emph{A microscopic derivation of time-
dependent correlation functions of the 1D cubic nonlinear Schr\"odinger equation}, Adv. Math. 353 (2019), 67--115.

\bibitem{GV80} J.~Ginibre, G.~Velo, \emph{On a class of non linear Schrödinger equations with non local interaction}, Math Z 170, 109–136 (1980).

\bibitem{GOW14} Z.~Guo, T.~Oh, Y.~Wang, \emph{Strichartz estimates for Schr\"odinger equations on irrational tori}. Proceedings of the London Mathematical Society 109.4 (2014), 975-1013.

\bibitem {GX23} C. Guzm\'an, C. Xu, \emph{The energy-critical inhomogeneous generalized Hartree equation in 3D}, Preprint arXiv, 2305.00972 (2023). 

\bibitem{HHK09} M. Hadac, S. Herr, H. Koch, \emph{Well-posedness and scattering for the KP-II equation in a critical space}, Ann. Inst. H. Poincaré Anal. Non Lin\'eaire 26 (2009), no. 3, pp. 917–941.

\bibitem{Hep74} K. Hepp, \emph{The classical limit for quantum mechanical correlation functions}, Commun.Math. Phys. {\bf 35}, 265–277 (1974).

\bibitem{HTT11} S. Herr, D. Tataru, N. Tzvetkov, \emph{Global well-posedness of the energy-critical nonlinear Schr\"odinger equation
with small initial data in $H^1(\T^3)$}. Duke Math. J. 159, 2 (2011), 329–349.

\bibitem{HTX16} Y. Hong, K. Taliaferro, Z. Xie,
\emph{Uniqueness of solutions to the 3D quintic Gross–Pitaevskii hierarchy}, Journal of Functional Analysis, Volume 270, Issue 1, 34--67, (2016).

\bibitem{IP12b} A.D. Ionescu, B. Pausader, \emph{Global well-posedness of the energy-critical defocusing NLS on $\R \times \T^3$}. Commun. Math. Phys. 312 (2012), 781–831.

\bibitem{IP11} A.D. Ionescu, B. Pausauder, \emph{The energy-critical defocusing NLS on $\T^3$}, Duke Math. J., Volume 161 (2012), 1581–1612.

\bibitem{JV24} L. Junge, F.L.A. Visconti, \emph{Ground state energy of a dilute Bose gas with three-body hard-core interactions}, Preprint arXiv 2406.09019 (2024).

\bibitem{Kat87} T. Kato, \emph{On nonlinear Schr\"odinger equations}, Ann. Inst. H. Poincare Physique Theorique 46, 113-129 (1987). 

\bibitem{KV16} R. Killip and M. Vi\c{s}an, \emph{Scale invariant Strichartz estimates on tori and applications}, Math. Res. Lett., Volume 23 (2016), 445–472.

\bibitem{Lee19} G.E. Lee, \emph{Local wellposedness for the critical nonlinear Schr\"odinger equation on $\T^3$}. Discrete Contin. Dyn.
Syst. 39 (2019), no. 5, 2763–2783.

\bibitem{Lee20} J. Lee, \emph{Rate of convergence towards mean-field evolution for weakly interacting bosons with singular three-body interactions}, Preprint arXiv 2006.13040 (2020).

\bibitem{LYYZ23} Y. Luo, X. Yu, H. Yue, Z. Zhao, \emph{On well-posedness results for the cubic-quintic NLS on $\T^3$}. Preprint arXiv 2301.13433 (2023).

\bibitem{NRT23} P.T. Nam, J. Ricaud, A. Triay, \emph{The condensation of a trapped dilute Bose gas with three-body interactions}. Probability and Mathematical Physics, Vol. 4 (2023), No. 1, 91–149.

\bibitem{NS20} P.T. Nam, R. Salzmann, \emph{Derivation of 3D energy-critical nonlinear Schr\"odinger equation and Bogoliubov excitations for Bose gases}. Comm. Math.Phys., 375(1):495–571, 2020.

\bibitem{RS23} A. Rout, V. Sohinger, \emph{A microscopic derivation of Gibbs measures for the 1D focusing quintic nonlinear Schr\"odinger equation}. Preprint arXiv 2308.06569 (2023).

\bibitem{Spo80} H. Spohn, \emph{Kinetic Equations from Hamiltonian Dynamics}, Rev. Mod. Phys, {52\bf} (1980), no. 3, 569–615.

\bibitem{TVZ07} T. Tao, M. Vi\c{s}an, X. Zhang, \emph{The Nonlinear Schrödinger Equation with Combined Power-Type Nonlinearities}. Communications in Partial Differential Equations, 32(8), 1281–1343 (2007).

\bibitem{Wang13} Y. Wang, \emph{Periodic nonlinear Schrödinger equation in critical $H^s(\T^n)$ spaces}, SIAM J. Math. Anal. 45 (2013), no. 3, 1691–1703.

\bibitem{Xie15} Z. Xie, \emph{Derivation of a nonlinear Schr\"odinger equation with a general power-type nonlinearity in $d = 1, 2$}, Differential Integral Equations 28 (2015), no. 5-6, 455–504.

\bibitem{Zhang06} X. Zhang, \emph{On the Cauchy problem of 3-D energy-critical Schrödinger equations with subcritical perturbations}, Journal of Differential Equations 230 (2006): 422-445.
\end{thebibliography}
\end{document}